\numberwithin{equation}{section}
\numberwithin{figure}{section}
\theoremstyle{plain}
\newtheorem{thm}{\protect\theoremname}[section]
  \theoremstyle{plain}
  \newtheorem{prop}[thm]{\protect\propositionname}
  \theoremstyle{definition}
  \newtheorem{defn}[thm]{\protect\definitionname}
  \theoremstyle{remark}
  \newtheorem{rem}[thm]{\protect\remarkname}
  \theoremstyle{plain}
  \newtheorem{lem}[thm]{\protect\lemmaname}
  \theoremstyle{definition}
  \newtheorem{cor}[thm]{\protect\corollaryname}
  \theoremstyle{definition}
  \providecommand{\definitionname}{Definition}
  \providecommand{\examplename}{Example}
  \providecommand{\lemmaname}{Lemma}
  \providecommand{\propositionname}{Proposition}
  \providecommand{\remarkname}{Remark}
  \providecommand{\corollaryname}{Corollary}
\providecommand{\theoremname}{Theorem}
\begin{document}

\title{The homotopy theory of bialgebras over pairs of operads}

\address{Laboratoire Paul Painlevé, Université de Lille 1, Cité Scientifique,
59655 Villeneuve d'Ascq Cedex, France}
\email{Sinan.Yalin@ed.univ-lille1.fr}

\author{Sinan Yalin}
\begin{abstract}
We endow the category of bialgebras over a pair of operads
in distribution with a cofibrantly generated model category structure.
We work in the category of chain complexes over a field of
characteristic zero. We split our construction in two steps.
In the first step, we equip coalgebras over an operad with a cofibrantly generated model category structure.
In the second step we use the adjunction between bialgebras and coalgebras via the free algebra functor.
This result allows us to do classical homotopical algebra in various
categories such as associative bialgebras, Lie bialgebras or Poisson bialgebras
in chain complexes.

\textit{Keywords} : operads, bialgebras category, homotopical algebra.

\textit{AMS} : 18G55 ; 18D50.

\end{abstract}

\maketitle

\tableofcontents{}

\section*{Introduction}

The goal of this paper is to define a model category structure for the categories of bialgebras
governed by operads in distribution. The work of Drinfeld on quantum groups (see \cite{Dri1} and \cite{Dri2})
has initiated the study of bialgebra structures where the product and the coproduct belong to various
types of algebras. Besides the classical Hopf algebras, examples include their non-commutative non-cocommutative
variant, Lie bialgebras, and Poisson bialgebras. Applications range from knot theory in topology to integrable
systems in mathematical physics.
The theory of operads in distribution, introduced by Fox and Markl in \cite{FM}, provides a convenient generalization
of the classical categories of bialgebras defined by products and coproducts in distribution.
The general idea is that there is an operad encoding the operations (where we have several inputs and a single output) and another operad
encoding the cooperations (a single input and several outputs). The distributive law then formalizes the interplay
between these operads, i.e the compatibilities between operations and cooperations.
We refer the reader to \cite{Lod} for a detailed survey providing many examples of these generalized bialgebras.
One may then wonder how to transpose homotopical algebra methods in this setting, as it has been done successfully for algebras over operads.
The aim of this paper is precisely to define a closed model category structure for bialgebras over operads in distribution.

The existence of a cofibrantly generated model category structure on algebras over a suitable operad
is a classical result, see \cite{Hin}. When working over a field of characteristic zero, such a structure
exists for any operad. Let $Ch_{\mathbb{K}}^+$ be the category of positively graded chain complexes. We denote by
$^P Ch_{\mathbb{K}}^+$ the category of $P$-coalgebras in $Ch_{\mathbb{K}}^+$.
To simplify, we only consider operads in the category of $\mathbb{K}$-vector spaces $Vect_{\mathbb{K}}$ in this paper.
We use that $Ch_{\mathbb{K}}^+$ is tensored over $Vect_{\mathbb{K}}$ (in a way compatible with respect to internal tensor structures)
to give a sense to the notion of $P$-coalgebra in this category $Ch_{\mathbb{K}}^+$.
We use similar conventions when we deal with algebras over operads.
In a first step we establish the existence of a cofibrantly generated model category structure for coalgebras over an operad:
\begin{thm}
Let $P$ be an operad in $Vect_{\mathbb{K}}$ such that $P(0) = 0$, $P(1) = \mathbb{K}$,
and the vector spaces $P(n)$, $n>1$ are finite dimensional.
The category of $P$-coalgebras $^P Ch_{\mathbb{K}}^+$ inherits a cofibrantly generated model category structure
such that a morphism $f$ of $^P Ch_{\mathbb{K}}^+$ is

(i) a weak equivalence if $U(f)$ is a weak equivalence in $Ch_{\mathbb{K}}^+$;

(ii) a cofibration if $U(f)$ is a cofibration in $Ch_{\mathbb{K}}^+$;

(iii) a fibration if $f$ has the right lifting property with respect to acyclic cofibrations.
\end{thm}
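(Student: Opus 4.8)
The plan is to recognize this as a model structure \emph{left-induced} along the forgetful functor $U$, which, in contrast to the algebra case, is a left adjoint: it admits a right adjoint given by the cofree $P$-coalgebra functor $K$, so that $U \dashv K$. The weak equivalences and the cofibrations are created by $U$ while the fibrations are defined by a right lifting property, which is exactly the signature of a left-induced structure. Before checking the axioms I would record that ${}^{P}Ch_{\mathbb{K}}^{+}$ is bicomplete. Since it is the category of coalgebras over the comonad $UK$, the functor $U$ is comonadic and hence \emph{creates colimits}, so colimits of $P$-coalgebras are computed on underlying complexes; limits, which are the subtle ones (already the product of two coalgebras is not the underlying product), are produced from the cofree construction. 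Here the hypotheses $P(0)=0$, $P(1)=\mathbb{K}$ and $\dim P(n)<\infty$ for $n>1$ are what guarantee that $K$ exists and is well-behaved.

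A number of axioms are then formal. The weak equivalences satisfy two-out-of-three and are closed under retracts because they are both reflected and preserved by $U$ and the quasi-isomorphisms of $Ch_{\mathbb{K}}^{+}$ have these properties; the same argument gives closure of the cofibrations under retracts, while the fibrations are closed under retracts because they are defined by a lifting property. The lifting axiom asserting that acyclic cofibrations lift against fibrations holds by the very definition (iii). Thus the genuine content is concentrated in the two factorization axioms, together with the complementary lifting axiom, namely that every cofibration lifts against every acyclic fibration, and the identification of acyclic fibrations with the maps that are simultaneously fibrations and weak equivalences.

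For the factorizations I would exploit that, over a field of characteristic zero and under the finiteness hypotheses, ${}^{P}Ch_{\mathbb{K}}^{+}$ is locally presentable: every $P$-coalgebra is the filtered union of its finite-dimensional sub-coalgebras, a consequence of $\dim P(n)<\infty$, which makes the cofree comonad finitary on underlying complexes. The finite-dimensional coalgebras therefore form a set of presentable generators. Since the cofibrations are precisely the $U$-monomorphisms (over a field, degreewise injections), they constitute an accessible, hence cofibrantly generated, weak factorization system, generated by a set $I$ of injections of finite-dimensional coalgebras; I would likewise produce a generating set $J$ for the acyclic cofibrations, identified with the injective quasi-isomorphisms. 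Running the small object argument inside ${}^{P}Ch_{\mathbb{K}}^{+}$ with $I$ and $J$ then yields the (cofibration, acyclic fibration) and (acyclic cofibration, fibration) factorizations. I stress that one cannot simply transport generators from $Ch_{\mathbb{K}}^{+}$ by applying the left adjoint $U$, which is exactly the feature distinguishing this argument from the standard transfer along a right adjoint.

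The main obstacle is the step legitimating this small object argument, and it has two faces: producing the generating set $J$ for the acyclic cofibrations (an accessibility, Smith-type argument showing the weak equivalences form an accessible class compatible with the injections), and the acyclicity condition proper, that the transfinite composites of pushouts of maps in $J$ are again weak equivalences. The latter reduces to showing that a pushout of an injective quasi-isomorphism along an arbitrary $P$-coalgebra map is again a quasi-isomorphism; because colimits are created by $U$ and we work over a field, such a pushout is computed underlying, where a pushout along an injective quasi-isomorphism is a homotopy pushout and thus stays a quasi-isomorphism, while filtered-colimit stability of quasi-isomorphisms handles the transfinite stages. Granting this, the remaining lifting axiom and the characterization of acyclic fibrations follow by the usual retract argument: a cofibration that is a weak equivalence is a retract of a relative $J$-cell complex and hence lifts against every fibration, and a fibration that is a weak equivalence has the right lifting property against every cofibration. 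This completes the verification of the axioms.
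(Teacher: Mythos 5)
Your overall strategy (left-induction along the left adjoint $U$ with right adjoint the cofree functor $P^*$, generating sets of injections with small targets, acyclicity of pushouts checked on underlying complexes) is a legitimate reorganization, and several of your observations match the paper: colimits are created by $U$, limits are produced from the cofree construction via coreflexive equalizers, and the stability of acyclic cofibrations under pushout and transfinite composition is verified after applying $U$. But the proposal outsources to ``local presentability'' and ``a Smith-type argument'' exactly the steps where all the work lies, and one of them hides a genuine gap.

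The gap: to identify the $I$-injectives with the acyclic fibrations you invoke ``the usual retract argument: a fibration that is a weak equivalence has the right lifting property against every cofibration.'' That retract argument requires a factorization of an arbitrary map as a cofibration followed by a map \emph{already known} to be a weak equivalence with the right lifting property against all cofibrations; nothing in your proposal produces such a factorization, and it is not formal --- it is precisely the acyclicity condition of left-induction theorems. The paper constructs it explicitly: embed $U(D)$ into an acyclic complex $V$ (Lemma 2.11), set $X=C\times P^*(V)$, and use the enveloping cooperad $U_{P^*}(C)$ to compute this product as $U_{P^*}(C)(V)$ and prove that the projection to $C$ is a quasi-isomorphism (Propositions 2.8 and 2.9, Corollary 2.10). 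Your proposal contains no substitute for this construction. Two further points are asserted rather than proved. The claim that injections with finite-dimensional (resp.\ countably generated) targets detect acyclic fibrations (resp.\ fibrations) is the content of Lemmas 2.13--2.14 and the Zorn-lemma argument of Proposition 2.15; Lemma 2.14, which produces inside a given acyclic cofibration $C\hookrightarrow D$ a countably generated subcoalgebra $B$ with $C\cap B\hookrightarrow B$ acyclic, is a delicate induction and is not a consequence of accessibility boilerplate. And the paper can only establish smallness of coalgebras with respect to systems of injections (Lemma 2.16), not arbitrary $\lambda$-sequences, which forces a modified small object argument after checking that every stage of the construction is an injection; your appeal to local presentability would make this moot, but you do not prove it, and it is at least as strong as what Lemma 2.13 delivers. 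In short: correct skeleton, but the enveloping-cooperad factorization and the reduction to small generating (acyclic) cofibrations are the substance of the theorem and are missing.
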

Note that an analoguous result has been proven in \cite{Smi} in the context of unbounded
chain complexes. We follow another simpler approach. We do not address the same level of generality, 
but we obtain a stronger result. To be more precise, in constrast with \cite{Smi}, we obtain a cofibrantly generated structure.
These generating cofibrations are crucial to transfer the model structure on bialgebras.
Moreover, we do not need the hypothesis considered in \cite{Smi}
about the underlying operad (see \cite{Smi}, condition 4.3). Our method is close to the ideas of \cite{GG}.
Such a result also appears in \cite{AC}, but for coalgebras over a quasi-free cooperad.

Then we transfer this cofibrantly generated model structure to the category of $(P,Q)$-bialgebras.
We denote by $^Q_P Ch_{\mathbb{K}}^+$ the category of $(P,Q)$-bialgebras in $Ch_{\mathbb{K}}^+$,
where $P$ encodes the operations and $Q$ the cooperations. We use an adjunction
\[
P: {}^QCh_{\mathbb{K}}^+\rightleftarrows {}^Q_PCh_{\mathbb{K}}^+:U
\]
to perform the transfer of model structure and obtain our main theorem:
\begin{thm}
The category of $(P,Q)$-bialgebras $^Q_P Ch_{\mathbb{K}}^+$ inherits a cofibrantly generated model category structure
such that a morphism $f$ of $^Q_P Ch_{\mathbb{K}}^+$ is

(i) a weak equivalence if $U(f)$ is a weak equivalence in $^Q Ch_{\mathbb{K}}^+$
(i.e a weak equivalence in $Ch_{\mathbb{K}}^+$ by definition of the model structure on $^Q Ch_{\mathbb{K}}^+$);

(ii) a fibration if $U(f)$ is a fibration in $^Q Ch_{\mathbb{K}}^+$;

(iii) a cofibration if $f$ has the left lifting property with respect to acyclic fibrations.
\end{thm}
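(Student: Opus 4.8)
The plan is to obtain the model structure by transfer along the free-forgetful adjunction $P: {}^QCh_{\mathbb{K}}^+ \rightleftarrows {}^Q_PCh_{\mathbb{K}}^+ : U$, using the first theorem to equip ${}^QCh_{\mathbb{K}}^+$ with a cofibrantly generated model structure, whose generating cofibrations and generating acyclic cofibrations I denote by $I$ and $J$. The relevant tool is the standard transfer (lifting) principle for cofibrantly generated model categories: if ${}^Q_PCh_{\mathbb{K}}^+$ is bicomplete, if the domains of $P(I)$ and $P(J)$ are small relative to the appropriate cell complexes, and if $U$ sends every relative $P(J)$-cell complex to a weak equivalence, then the three classes described in the statement form a cofibrantly generated model structure with generating (acyclic) cofibrations $P(I)$ (resp. $P(J)$). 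Once these hypotheses are verified, the model axioms follow formally, so the work is concentrated in checking them.

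First I would establish that ${}^Q_PCh_{\mathbb{K}}^+$ is complete and cocomplete. Limits are created by the forgetful functor $U$, since $U$ both preserves and reflects them. For colimits I would present ${}^Q_PCh_{\mathbb{K}}^+$ as the category of algebras over the monad $UP$ on the cocomplete category ${}^QCh_{\mathbb{K}}^+$ (the distributive law being exactly what makes the free $P$-algebra on a $Q$-coalgebra into a bialgebra), and invoke the fact that such a category of algebras is cocomplete as soon as the monad preserves filtered colimits and reflexive coequalizers. Under the finiteness hypotheses on $P$ this holds, the free functor being built from the symmetric powers $C \mapsto (P(n) \otimes C^{\otimes n})_{\Sigma_n}$. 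The same description shows that the domains of $P(I)$ and $P(J)$ remain small relative to $P(J)$-cell complexes, since tensor powers and $\Sigma_n$-coinvariants preserve filtered colimits; this lets the small object argument run and produces the required functorial factorizations.

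The main obstacle is the \emph{acyclicity condition}: every relative $P(J)$-cell complex must be a weak equivalence. The plan is the usual filtration argument for pushouts along free maps. Given a generating acyclic cofibration $j \colon A \to B$ of ${}^QCh_{\mathbb{K}}^+$, so that $U(j)$ is an injective quasi-isomorphism of chain complexes with acyclic cokernel $B/A$, and a pushout
\[
\xymatrix{
P(A) \ar[r] \ar[d]_{P(j)} & X \ar[d] \\
P(B) \ar[r] & Y
}
\]
I would construct a natural filtration of the underlying chain complex of $Y$ whose associated graded pieces have the form $(\textrm{term built from } X) \otimes_{\Sigma} (B/A)^{\otimes q}$ for $q \geq 0$, the piece $q = 0$ being $X$ itself. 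Because $\mathbb{K}$ has characteristic zero, Maschke's theorem makes the $\Sigma_n$-coinvariants functors exact, while the Künneth formula shows that each power $(B/A)^{\otimes q}$ with $q \geq 1$ is acyclic; hence every graded piece with $q \geq 1$ is acyclic, and the map $X \to Y$ is a quasi-isomorphism. Passing to possibly transfinite composites, and using that homology commutes with the filtered colimits along the injective maps produced above, I conclude that every relative $P(J)$-cell complex is a weak equivalence.

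The technical heart of the argument is thus the construction of this filtration together with the identification of its associated graded, and this is the only place where the characteristic-zero hypothesis and the finiteness of the spaces $P(n)$ are genuinely used; everything else reduces to formal consequences of the transfer principle and of the model structure furnished by the first theorem.
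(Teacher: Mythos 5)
Your overall strategy --- transfer along the adjunction $P: {}^Q Ch_{\mathbb{K}}^+ \rightleftarrows {}^Q_P Ch_{\mathbb{K}}^+ : U$ from the model structure established for $Q$-coalgebras --- is the same as the paper's, and your proposal is correct in outline, but the two key verifications are routed differently. For colimits, the paper creates them via the forgetful functor to ${}_P Ch_{\mathbb{K}}^+$ (a bialgebra being a $Q$-coalgebra in $P$-algebras, dually to the argument of section 2.1), whereas you present bialgebras as algebras over the monad $UP$ on ${}^Q Ch_{\mathbb{K}}^+$; both work. The substantial divergence is in the factorization step. The paper observes that the pushouts arising in the small object argument are computed in ${}_P Ch_{\mathbb{K}}^+$, that $P(j_i)$ is an (acyclic) cofibration of $P$-algebras, that such maps are closed under pushout and transfinite composition, and then invokes Hinich's theorem (for a $\Sigma$-split operad, which any operad is in characteristic zero, cofibrations of $P$-algebras are injections of complexes and acyclic cofibrations are acyclic injections) to conclude both the acyclicity condition and --- crucially --- that the cell maps are injective, which is what licenses the refined small object argument: the domains $P(A_i)$ are only small with respect to injection systems, because $Q$-coalgebras are not small with respect to arbitrary morphisms. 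Your filtration of the pushout with associated graded built from $(B/A)^{\otimes q}$, together with Maschke and K\"unneth, is essentially a direct re-proof of that theorem of Hinich; it buys self-containedness at the cost of redoing a cited result. One point you should make explicit: the smallness of the domains $P(A_i)$ does not follow from the free functor preserving filtered colimits, but from the adjunction combined with the smallness of $A_i$ with respect to injection systems only --- so you must verify, before running the small object argument, that every relative $P(J)$-cell map is injective on underlying complexes, which your filtration does in fact provide since its degree-zero layer is $X$ itself.
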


The strategy of the proofs is the following.
First we prove Theorem 0.1.
For this aim we construct, for any $P$-coalgebra $A$, a cooperad $U_{P^*}(A)$ called its enveloping cooperad,
whose coalgebras are the $P$-coalgebras over $A$. It expresses
the coproduct of $A$ with a cofree coalgebra in terms of the evaluation of the
associated enveloping cooperad functor. Axioms M2 and M3 are obvious. Axioms M1 is proved
in an analogue way than in the case of algebras. The main difficulty lies in the proofs of
M4 and M5. We use proofs inspired from that of \cite{GG} and adapted to our
operadic setting. The enveloping cooperad plays a key role here.
In order to produce the desired factorization axioms, our trick here
is to use a slightly modified version of the usual small object argument. We use smallness
with respect to injections systems running over a certain ordinal.

Then we prove our main result, Theorem 0.2. Axioms M1, M2 and M3 are easily checked, and axiom M4 follows
from arguments similar to the case of coalgebras. The main difficulty is the proof of M5. We use mainly the small object
argument for smallness with respect to injections systems, combined with a result about cofibrations in algebras over an operad due to Hinich \cite{Hin}.

Let us point out that in both cases, we cannot use the usual simplifying hypothesis of smallness with respect to all morphisms.
This is due to the fact that Lemma 2.16, giving an essential smallness property for coalgebras, is only true for
smallness with respect to injection systems.

\textit{Organization}: the overall setting is reviewed in section
1. We suppose some prerecquisites concerning operads (see \cite{LV})
and give definitions of algebras and coalgebras over an operad.
Then we define distributive laws from the monadic viewpoint, following \cite{FM}.
Examples of monads and comonads include operads and cooperads.
We recall some basic facts about the small object argument in cofibrantly generated
model categories, in order to fix useful notations for the following.

The heart of this paper consists of sections 2 and 3, devoted to the proofs of Theorem
0.1 and 0.2. The proof of Theorem 0.1 heavily relies on the notion of enveloping cooperad,
which is defined in 2.2. In 2.3, we follows the argument line of \cite{GG}, checking carefully
where modifications are needed to work at our level of generality.
Theorem 0.2 is proved in section 3, by using adjunction properties to transfer the model structure
obtained in Theorem 0.1. The crux here is a small object argument with respect to
systems of injections of coalgebras.

\section{Recollections}

In this section, we first list some notions and facts about operads and algebras over operads.
Then we review the interplay between monads and comonads by means of distributive laws and make the link with operads.
It leads us to the crucial definition of bialgebras over pairs of operads in distribution.
Finally, we recall a classical tool of homotopical algebra, namely the small object argument, aimed to produce factorizations
in model categories. The material of this section is taken from \cite{LV}, \cite{FM} and \cite{Hov}.

\subsection{Algebras and coalgebras over operads}

For the moment, we work in the category of non-negatively graded chain complexes $Ch_{\mathbb{K}}$,
where $\mathbb{K}$ is a field of characteristic zero (but we still assume that our operads belong
to $Vect_{\mathbb{K}}$).
We adopt most conventions of \cite{LV} and freely use the notations of this reference.
The only exception is the name $\Sigma$-module which denotes the notion
of $\mathbb{S}$-module of \cite{LV}. This convention is more usual in topology.
We also refer the reader to \cite{LV} for the definitions of operads and cooperads.

Operads are used to parametrize various kind of algebraic structures.
Fundamental examples of operads include the operad $As$ encoding associative algebras,
the operad $Com$ of commutative algebras, the operad $Lie$ of Lie algebras and the operad
$Pois$ of Poisson algebras.
There exists several equivalent approaches for the
definition of an algebra over an operad. We will use the following one which we recall for convenience:
\begin{defn}\textit{(the monadic approach)}
Let $(P,\gamma,\iota)$ be an operad, where $\gamma$ is the composition product and $\iota$ the unit.
A $P$-algebra is a complex $A$ endowed
with a linear map $\gamma_{A}:P(A)\rightarrow A$ such that
the following diagrams commute 
\[
\xymatrix{(P\circ P)(A)\ar[r]^{P(\gamma_{A})}\ar[d]_{\gamma(A)} & P(A)\ar[d]^{\gamma_{A}}\\
P(A)\ar[r]_{\gamma_{A}} & A
}
\]

\[
\xymatrix{A\ar[r]^{\iota(A)}\ar[rd]_{=} & P(A)\ar[d]^{\gamma_{A}}\\
 & A
}
.
\]
\end{defn}
We will denote $_P Vect_{\mathbb{K}}$ the category of $P$-algebras in vector spaces
and $_P Ch_{\mathbb{K}}$ the category of $P$-algebras in non-negatively graded chain complexes.

For every complex $V$, we can equip $P(V)$ with a $P$-algebra structure
by setting $\gamma_{P(V)}=\gamma (V):P(P(V)) \rightarrow P(V)$. As a consequence of the definition,
we thus get the free $P$-algebra functor:
\begin{prop}
(see \cite{LV}, Proposition 5.2.6) The $P$-algebra $(P(V),\gamma(V))$ equiped with the map
$\iota(V):I(V)=V \rightarrow P(V)$ is the free $P$-algebra on $V$.
\end{prop}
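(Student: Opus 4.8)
The plan is to verify the universal property of the free object directly. Concretely, what must be shown is that for every $P$-algebra $(A,\gamma_{A})$ and every morphism of chain complexes $f\colon V\to A$, there is a \emph{unique} morphism of $P$-algebras $\tilde{f}\colon P(V)\to A$ satisfying $\tilde{f}\circ\iota(V)=f$; this is exactly the assertion that $\iota(V)$ exhibits $P(V)$ as the value on $V$ of a left adjoint to the forgetful functor ${}_{P}Ch_{\mathbb{K}}\to Ch_{\mathbb{K}}$.

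For existence, I would set $\tilde{f}:=\gamma_{A}\circ P(f)$, the composite $P(V)\xrightarrow{P(f)}P(A)\xrightarrow{\gamma_{A}}A$. To see that $\tilde{f}$ is a morphism of $P$-algebras one compares $\tilde{f}\circ\gamma(V)$ with $\gamma_{A}\circ P(\tilde{f})$: using the naturality of the composition product $\gamma\colon P\circ P\to P$ with respect to $f$, one rewrites both composites in terms of $P(P(f))$, and the two then coincide precisely by the associativity axiom $\gamma_{A}\circ P(\gamma_{A})=\gamma_{A}\circ\gamma(A)$ of the algebra $A$. The identity $\tilde{f}\circ\iota(V)=f$ follows in the same spirit: naturality of the unit $\iota\colon I\to P$ with respect to $f$ gives $P(f)\circ\iota(V)=\iota(A)\circ f$, after which the unit axiom $\gamma_{A}\circ\iota(A)=\mathrm{id}_{A}$ of $A$ collapses the composite to $f$.

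The step I expect to carry the real content is uniqueness, and the key is to recognise that $(P,\gamma,\iota)$ is nothing but a monad on $Ch_{\mathbb{K}}$, for which the two operadic unit conditions read as the monad identities $\gamma(V)\circ\iota(P(V))=\mathrm{id}_{P(V)}$ and $\gamma(V)\circ P(\iota(V))=\mathrm{id}_{P(V)}$. Given any $P$-algebra morphism $g\colon P(V)\to A$ with $g\circ\iota(V)=f$, I would compute
\[
\tilde{f}=\gamma_{A}\circ P(f)=\gamma_{A}\circ P(g)\circ P(\iota(V))=g\circ\gamma(V)\circ P(\iota(V))=g,
\]
where the third equality uses that $g$ is a morphism of $P$-algebras (so $\gamma_{A}\circ P(g)=g\circ\gamma(V)$) and the last uses the second monad unit identity. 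This shows that every such $g$ coincides with $\tilde{f}$, yielding uniqueness and simultaneously confirming the construction. One could alternatively bypass the explicit computation by quoting the general fact that the Eilenberg--Moore category of a monad carries a free--forgetful adjunction whose left adjoint sends $V$ to $(P(V),\gamma(V))$; the only point specific to our situation is the identification of $P$-algebras in the sense of Definition~1.1 with algebras over this monad, which is immediate from the defining diagrams.
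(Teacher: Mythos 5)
Your proof is correct and is essentially the standard argument for this statement; the paper itself gives no proof but only cites \cite{LV}, Proposition 5.2.6, where the same verification of the universal property (existence via $\tilde{f}=\gamma_A\circ P(f)$ and uniqueness via the monad unit identity $\gamma(V)\circ P(\iota(V))=\mathrm{id}_{P(V)}$) is carried out. Your closing remark that this is just the free--forgetful adjunction for the Eilenberg--Moore category of the monad $P(-)$ is exactly the point of view the paper adopts (cf.\ Definition 1.1, the ``monadic approach''), so there is nothing to add.
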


There is also a notion of coalgebra over a cooperad:
\begin{defn}\textit{(the comonadic approach)}
Let $(C,\Delta,\epsilon)$ be a cooperad, where $\Delta$ is the decomposition product and $\epsilon$ the counit
(they define on $C$ a structure of comonoid). A $C$-coalgebra is a complex $X$ equiped with a linear application
$\rho_X:X\rightarrow C(X)$ such that the following diagrams commute:
\[
\xymatrix{X\ar[r]^{\rho_X}\ar[d]_{\rho_X} & C(X)\ar[d]^{\Delta\rho_X}\\
C(X)\ar[r]_{C(\rho_X)} & C(C(X))
}
\]

\[
\xymatrix{X\ar[r]^{\rho}\ar[dr]_{=} & C(X)\ar[d]^{\epsilon(X)}\\
 & X
}.
\]
\end{defn}

We can go from operads to cooperads and vice-versa by dualization.
Indeed, if $C$ is a cooperad, then the $\Sigma$-module $P$ defined by $P(n)=C(n)^*=Hom_{\mathbb{K}}(C(n),\mathbb{K})$
form an operad. Conversely, suppose that $\mathbb{K}$ is of characteristic zero and $P$ is an operad such that 
each $P(n)$ is finite dimensional. Then the $P(n)^*$ form a cooperad, in the sense of \cite{GJ} and \cite{LV}.
The additional hypotheses are needed because we have to use, for finite dimensional vector spaces $V$ and $W$, the isomorphism
$(V\otimes W)^*\cong V^*\otimes W^*$ to define properly the cooperad coproduct.

We also give the definition of coalgebras over an operad:
\begin{defn}
(1) Let $P$ be an operad. A $P$-coalgebra is a complex $C$ equiped with linear applications
$\rho_n:P(n)\otimes C \rightarrow C^{\otimes n}$ for every $n\geq0$. These maps are $\Sigma_n$-equivariant
and associative with respect to the operadic compositions.

(2) Each $p\in P(n)$ gives rise to a cooperation $p^*:C\rightarrow C^{\otimes n}$.
The coalgebra $C$ is usually said to be conilpotent if for each $c\in C$, there exists $N\in\mathbb{N}$
so that $p^*(c)=0$ when we have $p\in P(n)$ with $n>N$.
\end{defn}
If $\mathbb{K}$ is a field of characteristic zero and the $P(n)$ are finite dimensional, then
it is equivalent to define a $P$-coalgebra via a family of applications
$\overline{\rho}_n:C\rightarrow P(n)^*\otimes_{\Sigma_n} C^{\otimes n}$.

Now suppose that $P$ is an operad such that the $P(n)$ are finite dimensional, $P(0)=0$ and $P(1)=\mathbb{K}$.
Then we have a cofree conilpotent $P$-coalgebra functor, which is by definition
the right adjoint to the forgetful functor and is given by the comonad associated to the cooperad $P^*$:
\begin{thm}
Let $V$ be an object of $Ch_{\mathbb{K}}$. Then
\[
P^*(V)=\bigoplus_{r=1}^{\infty} P(r)^* \otimes_{\Sigma_r} V^{\otimes r}
\]
inherits a $P$-coalgebra structure and forms the cofree conilpotent $P$-coalgebra.
\end{thm}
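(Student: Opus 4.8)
The plan is to recognize $P^*(V)$ as the value at $V$ of the comonad associated to the cooperad $P^*$, and then to read off the cofree property from the general comonadic formalism, taking care of the finiteness issues proper to the conilpotent setting. By the discussion preceding the statement, the hypotheses that the $P(n)$ are finite dimensional and that $\mathrm{char}\,\mathbb{K}=0$ ensure that the dual $\Sigma$-module $P^*$, with $P^*(n)=P(n)^*$, carries a cooperad structure $(P^*,\Delta,\epsilon)$; the conditions $P(0)=0$ and $P(1)=\mathbb{K}$ give $P^*(0)=0$ and $P^*(1)=\mathbb{K}$. Under the same hypotheses, a $P$-coalgebra in the sense of Definition 1.4 is the same datum as a coalgebra over the cooperad $P^*$ in the sense of Definition 1.3: the families $\overline{\rho}_n\colon C\to P(n)^*\otimes_{\Sigma_n}C^{\otimes n}$ assemble into a single comonadic structure map $\rho_C\colon C\to P^*(C)$. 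I will work throughout with this comonadic description.

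First I would equip $P^*(V)$ with its tautological coalgebra structure, taking the structure map to be
\[
\rho_{P^*(V)}=\Delta(V)\colon P^*(V)\longrightarrow P^*(P^*(V)).
\]
The coassociativity and counit diagrams of Definition 1.3 for $\rho_{P^*(V)}$ then follow immediately from the coassociativity and counit axioms of the cooperad $P^*$. A point that deserves attention is that $\Delta(V)$ genuinely lands in the direct sum $P^*(P^*(V))$: an element of $P^*(V)$ lies in a finite sum of summands $P(r)^*\otimes_{\Sigma_r}V^{\otimes r}$, and the decomposition of each fixed arity is a finite sum of trees, so the image is a finite sum. The same finiteness shows that $P^*(V)$ is conilpotent, since the cooperation attached to $p\in P(n)$ vanishes on all summands of arity strictly less than $n$.

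Next I would establish the universal property, namely that $P^*(-)$ is right adjoint to the forgetful functor $U$ from conilpotent $P$-coalgebras to $Ch_{\mathbb{K}}$. The counit is the projection $\pi\colon P^*(V)\to V$ onto the arity-one summand $P(1)^*\otimes V\cong V$, which coincides with $\epsilon(V)$. Given a conilpotent $P$-coalgebra $C$ and a chain map $f\colon C\to V$, I would define the lift as $\widetilde{f}=P^*(f)\circ\rho_C$, so that its arity-$r$ component is the composite
\[
C\xrightarrow{\ \overline{\rho}_r\ }P(r)^*\otimes_{\Sigma_r}C^{\otimes r}\xrightarrow{\ \mathrm{id}\otimes f^{\otimes r}\ }P(r)^*\otimes_{\Sigma_r}V^{\otimes r}.
\]
Conilpotency of $C$ guarantees that, for each $c\in C$, only finitely many of these components are nonzero, so $\widetilde{f}$ does take values in the direct sum $P^*(V)$. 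That $\widetilde{f}$ is a morphism of $P$-coalgebras, i.e. $\rho_{P^*(V)}\circ\widetilde{f}=P^*(\widetilde{f})\circ\rho_C$, follows from the coassociativity of $\rho_C$ and the definition of $\Delta$, and $\pi\circ\widetilde{f}=f$ holds by construction because $\overline{\rho}_1=\mathrm{id}$.

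Once everything is phrased comonadically the adjunction is formal: uniqueness of the lift, and hence the bijection between $\mathrm{Hom}_{Ch_{\mathbb{K}}}(U(C),V)$ and the set of $P$-coalgebra morphisms $C\to P^*(V)$, is a consequence of the triangle identities of the comonad $P^*(-)$. The real content, and the main obstacle, is therefore the bookkeeping that makes this formalism applicable. Three points must be handled with care: that the direct-sum construction yields a genuine comonad, which forces the use of $\bigoplus$ rather than a completion and is exactly where conilpotency enters; that the comparison map lands in the direct sum, again by conilpotency of the source; and that the operadic description of $P$-coalgebras (Definition 1.4) agrees with the cooperadic one (Definition 1.3), which rests on finite-dimensional duality in characteristic zero to identify $\Sigma_r$-invariants with $\Sigma_r$-coinvariants and to dualize $\rho_n$ into $\overline{\rho}_n$. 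These are precisely the steps in which all four hypotheses on $P$ are used, and keeping them compatible with the internal differentials of the chain complexes is the delicate part of the argument.
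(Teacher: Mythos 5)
The paper states this result in its recollections section (Section 1.1) without proof, presenting it as the standard fact that the cofree conilpotent $P$-coalgebra is given by the comonad associated to the dual cooperad $P^*$; so there is no in-paper argument to compare against. Your proposal is a correct proof along exactly the lines the surrounding text implies: the structure map $\Delta(V)$, the verification that decomposition and the comparison map $P^*(f)\circ\rho_C$ land in the direct sum (the former because each arity decomposes into finitely many terms, the latter by conilpotency of the source), the vanishing of cooperations of arity $n$ on summands of arity $r<n$ (which uses $P(0)=0$), and uniqueness of the lift via the counit identity $P^*(\epsilon(V))\circ\Delta(V)=\mathrm{id}$. Nothing essential is missing.
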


The conilpotence condition is automatically fulfilled when we deal with operads in $Vect_{\mathbb{K}}$
and coalgebras in $Ch_{\mathbb{K}}^+$,
because these hypotheses ensure that the morphisms $P(n)\otimes C_d\rightarrow (C^{\otimes n})_d$
are zero for $n>d$.
In the next sections we will use these assumptions and just say $P$-coalgebra
to refer to a conilpotent $P$-coalgebra.
Under these assumptions, we also get as a corollary of this theorem an equivalence between the notion of coalgebra
over the operad $P$ (Definition 1.4) and the notion of coalgebra over the cooperad $P^*$ (Definition 1.3).

\subsection{Monads, comonads and distributive laws}

In certain cases, bialgebras can be parametrized by a pair of operads in the following
way: one operad encodes the operations, the other encodes the cooperations, such that
the concerned bialgebra forms an algebra over the first operad and a coalgebra over the second operad.
The compatibility relations between operations and cooperations are formalized by
the notion of mixed distributive law between the two operads. The Schur functor associated to an operad
forms a monad, and the Schur functor associated to a cooperad forms a comonad. The mixed distributive law induces
a distributive law between this monad and this comonad. We briefly review the notion
of distributive law in the monadic setting. We refer the reader to \cite{FM} for definitions of monads, comonads, their algebras
and coalgebras.

Let $\mathcal{C}$ be a category. Suppose we have in $\mathcal{C}$ a monad $(P,\gamma,\iota)$
and a comonad $(Q^*,\delta,\epsilon)$. We would like to make $P$ and $Q^*$
compatible, that is to define $Q^*$-coalgebras in $P$-algebras or
conversely $P$-algebras in $Q^*$-coalgebras. This compatibility is formalized
by the notion of mixed distributive law \cite{FM}:

\begin{defn}
A mixed distributive law $\lambda:P Q^*\rightarrow Q^* P$
between $P$ and $Q^*$ is a natural transformation satisfying the following conditions:

(i)$\Lambda\circ\gamma Q^*=Q^*\gamma\circ\Lambda$

(ii)$\delta P\circ\Lambda=\Lambda\circ P\delta$

(iii)$\lambda\circ\iota Q^*=Q^*\iota$

(iv)$\epsilon P\circ\lambda=P\epsilon$

where the $\Lambda:P^m (Q^*)^n\rightarrow (Q^*)^n P^m$, for every
natural integers $m$ and $n$, are the natural transformations obtained by iterating $\lambda$.
For instance, for $m=2$ and $n=3$ we have
\[
\xymatrix{P^2 (Q^*)^3\ar[r]^{P\lambda (Q^*)^2}
& P Q^* P (Q^*)^2\ar[r]^{\lambda^2 Q^*}
& Q^* P Q^* P Q^*\ar[r]^{Q^*\lambda^2}
& (Q^*)^2 P Q^* P\ar[r]^{(Q^*)^2\lambda P}
& (Q^*)^3 P^2}
\]
\end{defn}
These conditions allow us to lift $P$ as an endofunctor of the category $Q^*-Coalg$
of $Q^*$-coalgebras and $Q^*$ as an endofunctor of the category $P-Alg$ of $P$-algebras.
These notations are chosen to emphasize the fact that later, the monad $P$ will
correspond to an operad $P$ and the comonad $Q^*$ to an operad $Q$ (which gives a comonad
$Q^*$ by dualization and the finiteness hypothesis).

Then we can define the notion of bialgebra over a pair (monad,comonad) endowed with
a mixed distributive law:
\begin{defn}
(a) Given a monad $P$, a comonad $Q^*$ and a mixed distributive law
$\lambda:PQ^*\rightarrow Q^*P$, a $(P,Q^*)$-bialgebra
$(B,\beta,b)$ is an object $B$ of $\mathcal{C}$ equiped with two morphisms $\beta:P(B)\rightarrow B$
and $b:B\rightarrow Q^*(B)$ defining respectively a $P$-algebra structure and a
$Q^*$-coalgebra structure. Furthermore, the maps $\beta$ and $b$ satisfy a compatibility
condition expressed through the commutativity of the following diagram:
\[
\xymatrix{ P(Q^*(B)) \ar[d]_{\lambda(B)} & P(B)\ar[l]_{P(b)} \ar[dd]^{\beta} \\
Q^*(P(B)) \ar[d]_{Q^*(\beta)} & \\
Q^*(B) & B \ar[l]^b }
\]
(b) A morphism of $(P,Q^*)$-bialgebras is a morphism of $\mathcal{C}$ which is both a morphism
of $P$-algebras and a morphism of $Q^*$-coalgebras.

The category of $(P,Q^*)$-bialgebras is denoted $(P,Q^*)-Bialg$.
\end{defn}

\begin{rem}
The application $Q^* (\beta)\circ\lambda(B)$ endows $Q^*(B)$ with a $P$-algebra structure,
and the application $\lambda(B)\circ P(b)$ endows $P(B)$ with a $Q^*$-coalgebra structure.
Moreover, given these two structures, the compatibility diagram of Definition 1.7 shows that $\beta$ is a morphism
of $Q^*$-coalgebras and $b$ a morphism $P$-algebras. The $(P,Q^*)$-bialgebras
can therefore be considered as $Q^*$-coalgebras in the category $P-Alg$ of $P$-algebras
or as $P$-algebras in the category $Q^*-Coalg$ of $Q^*$-coalgebras.
\end{rem}

In the case of operads, there is a notion of mixed distributive law between two operads,
defined by explicite formulae for which we refer the reader to \cite{FM}.
The link with the monadic distributive law is given by the following theorem:

\begin{thm}(cf. \cite{FM})
Let $P$ and $Q$ be two operads endowed with a mixed distributive law.
Then the monad $P(-)$ and the comonad $Q^*(-)$ inherits a distributive law in the sense of Definition 1.6
induced by this mixed distributive law.
\end{thm}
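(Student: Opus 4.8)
The plan is to build the natural transformation $\lambda\colon P(Q^*(-))\to Q^*(P(-))$ directly out of the operadic mixed distributive law, and then to verify the four conditions of Definition 1.6 by reducing each one to a defining relation of the operadic law. First I would write both composite functors explicitly as Schur functors: for a complex $V$ one has $P(Q^*(V))=\bigoplus_m P(m)\otimes_{\Sigma_m}(Q^*(V))^{\otimes m}$ and $Q^*(P(V))=\bigoplus_n Q(n)^*\otimes_{\Sigma_n}(P(V))^{\otimes n}$, where $Q^*(W)=\bigoplus_n Q(n)^*\otimes_{\Sigma_n}W^{\otimes n}$ and the finiteness and characteristic-zero hypotheses are what let us treat the $Q(n)^*$ as a genuine cooperad. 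An element of the source is an operation $p\in P(m)$ applied to $m$ cooperations; the operadic mixed distributive law of \cite{FM} prescribes precisely how to commute an operation past a cooperation, that is, how to rewrite such an ``operation-after-cooperation'' pattern as a sum of ``cooperation-after-operation'' patterns. Dualizing the $Q$-factors by means of the finiteness isomorphism $(V\otimes W)^*\cong V^*\otimes W^*$, these formulae assemble summand by summand into the map $\lambda(V)$. Naturality in $V$ is then immediate, since $\lambda(V)$ is built only from tensor powers of $V$ together with the (natural) structure maps of the operad, the cooperad, and the distributive law.

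It then remains to check conditions (i)--(iv), each of which pairs a piece of monad/comonad structure with the corresponding operadic datum. Condition (iii), involving the unit $\iota$, and condition (iv), involving the counit $\epsilon$, express that $\lambda$ reduces to an identity whenever the operation or the cooperation is the operadic unit; these follow from the normalization of the distributive law on the arity-one components. Conditions (i) and (ii), involving the monad multiplication $\gamma$ and the comonad comultiplication $\delta$, are the substantive ones: they assert that commuting a composite of operations past a cooperation, or a single operation past a composite of cooperations, yields the same answer whether performed all at once or one factor at a time. I would derive these from the associativity and compatibility clauses of the Fox--Markl mixed distributive law, unwinding the Schur-functor composites into sums indexed by two-level trees and comparing the two sides of each diagram.

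The main obstacle I anticipate is the coherence of the iterated transformations $\Lambda\colon P^m(Q^*)^n\to (Q^*)^n P^m$ appearing in Definition 1.6: one must know that iterating $\lambda$ along the prescribed path (as in the displayed diagram for $m=2$, $n=3$) is independent of the order in which the individual $\lambda$'s are applied, which is a hexagon / Yang--Baxter-type coherence. This is exactly what conditions (i) and (ii) secure once they are established, but proving them requires tracking the dualization with care, since the operadic relations are stated for $P$ and $Q$ while the target transformation involves $Q^*$; it is the finiteness hypothesis on the spaces $Q(n)$ that makes the dualized coherences hold. Apart from this bookkeeping, the argument is a faithful translation of the operadic mixed distributive law into the monadic language, which is why the statement is attributed to \cite{FM}.
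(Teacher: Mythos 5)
The paper does not actually prove this statement: Theorem 1.9 is quoted from Fox--Markl, and the text explicitly defers both the definition of the operadic mixed distributive law and the verification that it induces a monadic one to \cite{FM}. So there is no internal proof to compare against, and your outline has to be judged on its own. As a plan it is the right one and matches what \cite{FM} does: realize $P(-)$ and $Q^*(-)$ as Schur functors, use the explicit operation-past-cooperation rewriting rules together with the finiteness isomorphism $(V\otimes W)^*\cong V^*\otimes W^*$ to assemble $\lambda(V)$ summand by summand, and reduce axioms (i)--(iv) of Definition 1.6 to the unit, counit, associativity and coassociativity clauses of the operadic law.

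Two remarks. First, the ``main obstacle'' you single out is not where the difficulty lies: the independence of the iterated $\Lambda\colon P^m(Q^*)^n\to (Q^*)^nP^m$ from the order in which the individual $\lambda$'s are applied is a consequence of naturality alone (the Godement interchange law for natural transformations applied at disjoint functor positions), so it costs nothing once $\lambda$ is natural; conditions (i) and (ii) are a separate matter, namely compatibility with $\gamma$ and $\delta$, and they are where all the actual computation sits. Your proposal describes that computation (``unwinding the Schur-functor composites into sums indexed by two-level trees'') but does not carry it out, so what you have is a correct and complete strategy rather than a proof. Second, a point of bookkeeping you omit: $\lambda(V)$ must descend to the coinvariants $P(m)\otimes_{\Sigma_m}(Q^*(V))^{\otimes m}$ and land in $Q(n)^*\otimes_{\Sigma_n}(P(V))^{\otimes n}$, so you need the equivariance built into the Fox--Markl formulae; this is harmless in characteristic zero but should be stated when you define the map.
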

We define a $(P,Q)$-bialgebra as a bialgebra over these monads in distribution in the sense of Definition 1.7.
Suppose that $P$ and $Q$ are operads in $Vect_{\mathbb{K}}$ such that the $Q(n)$ are finite dimensional,
$Q(0)=0$ and $Q(1)=\mathbb{K}$.
Then we know that the notions of $Q$-coalgebras and $Q^*$-coalgebras coincide.
A $(P,Q)$-bialgebra is thus equiped with a $P$-algebra structure, a $Q$-coalgebra structure
and compatibilities with respect to the distributive law.
The operadic distributive law formalizes the interplay between algebraic operations
and coalgebraic cooperations of the bialgebra.

Let us finally note that if $B$ is a $(P,Q)$-coalgebra, then, as a corollary of Theorem 1.9 and Remark 1.8,
the free $P$-algebra $P(B)$ has a natural structure of $Q$-coalgebra and
the cofree $Q$-coalgebra $Q^*(B)$ has a natural structure of $P$-algebra.

\subsection{Model categories and the small object argument}

Model categories are the natural setting to do homotopical algebra.
This means that they encode well defined notions of cylinder objects and path objects,
homotopy classes, non-abelian cohomology theories and non abelian functor derivation (Quillen's derived functors).
We will just recall here some facts about cofibrantly generated model categories and the small object argument,
for the purpose to fix conventions and the definition of objects used in our constructions.
We refer the reader to the classical reference \cite{Qui}, but also to \cite{DS} for a well-written introduction
to model categories and their homotopy theories, as well as \cite{Hov} and \cite{Hir} to push the analysis further.
Let us briefly recall the small object argument,
which is a general and useful way to produce factorizations with lifting properties with respect
to a given class of morphisms.
We just sum up the construction given in \cite{Hov} without detailing the process.
We refer the reader for section 2.1.1 of \cite{Hov} for recollections about ordinals, cardinals
and transfinite composition.

Suppose that $\mathcal{C}$ is a category admitting small colimits. Let $\lambda$ be an ordinal.
A $\lambda$-sequence is a colimit preserving functor $B:\lambda\rightarrow\mathcal{C}$, written as
\[
B(0)\rightarrow B(1)\rightarrow...\rightarrow B(\beta)\rightarrow...
\]
Now let us fix an object $A$ of $\mathcal{C}$, a collection $\mathcal{D}$ of morphisms of $\mathcal{C}$
and a cardinal $\kappa$.
\begin{defn}
(1) The object $A$ is $\kappa$-small with respect to $\mathcal{D}$ if for
all $\kappa$-filtered ordinals $\lambda$ and all $\lambda$-sequences
\[
B(0)\rightarrow B(1)\rightarrow...\rightarrow B(\beta)\rightarrow...
\]
such that each map $B(\beta)\rightarrow B(\beta+1)$ is in $\mathcal{D}$ for $\beta+1<\lambda$,
the canonical induced map 
\[
colim_{\beta<\lambda}Mor_{\mathcal{C}}(A,B(\beta))\rightarrow Mor_{\mathcal{C}}(A,colim_{\beta<\lambda} B(\beta))
\]
is a bijection.

(2) The object $A$ is small if it is $\kappa$-small with respect to all morphisms of $\mathcal{C}$
for a certain cardinal $\kappa$.
\end{defn}
A fundamental example of small object is the following:
\begin{lem}(\cite{Hov}, Lemma 2.3.2)
Every chain complex over a ring is small.
\end{lem}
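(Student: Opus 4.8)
\emph{Proof plan.} The plan is to reduce the statement to a cardinality count on the underlying graded set of the complex, exploiting that filtered colimits of chain complexes are computed degreewise and, in each degree, on underlying sets. Fix a chain complex $A$ over a ring $R$, and let $|A|$ denote the cardinality of its underlying graded set $\coprod_n A_n$. I would choose an infinite regular cardinal $\kappa$ strictly larger than $|R|\cdot|A|$ and claim that $A$ is $\kappa$-small with respect to all morphisms, in the sense of Definition 1.10. The point of regularity is that any family of fewer than $\kappa$ ordinals below a $\kappa$-filtered $\lambda$ has supremum still below $\lambda$, so finitely described data indexed by $<\kappa$ elements can always be realized at a single stage.

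Let $\lambda$ be a $\kappa$-filtered ordinal and $B\colon\lambda\to Ch_R$ a $\lambda$-sequence, with colimit $B(\lambda)=colim_{\beta<\lambda}B(\beta)$. Since $\lambda$ is filtered and colimits in $Ch_R$ are formed degreewise, each $B(\lambda)_n$ is the filtered colimit of the modules $B(\beta)_n$ and is computed on underlying sets: every element is represented at some stage, and two elements coincide in the colimit if and only if they already coincide at some stage. For surjectivity of the comparison map, I would start from a chain map $f\colon A\to B(\lambda)$, lift each of the $<\kappa$ elements of $\coprod_n A_n$ to a representative at some stage, and pass to a common stage $\gamma_0<\lambda$ using $\kappa$-filteredness. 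This yields only a map of graded sets lifting $f$.

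The heart of the argument, and the step I expect to be the main obstacle, is upgrading this set-level lift to an honest morphism of $Ch_R$ at a later stage. The conditions that the lift be additive, $R$-linear, and compatible with the differential are each relations among finitely many elements of $A$; each such relation holds in the colimit $B(\lambda)$, hence holds already at some stage. There are at most $|R|\cdot|A|<\kappa$ such relations, so by $\kappa$-filteredness they can all be forced to hold simultaneously at a single stage $\gamma_1\geq\gamma_0$. Pushing the lift to $B(\gamma_1)$ then produces a genuine chain map $A\to B(\gamma_1)$ representing $f$, which gives surjectivity. Injectivity is easier: two lifts $A\to B(\beta)$ inducing the same map to $B(\lambda)$ agree on each of the $<\kappa$ elements of $A$ at some stage, and a common stage obtained from $\kappa$-filteredness witnesses their equality in $colim_{\beta<\lambda}Mor_{Ch_R}(A,B(\beta))$. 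The delicate point throughout is the bookkeeping ensuring that the number of elements and relations to be reconciled stays below $\kappa$, which is exactly what the choice of a regular $\kappa>|R|\cdot|A|$ guarantees.
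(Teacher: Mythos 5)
Your argument is correct and is essentially the proof that the paper delegates to the cited reference (\cite{Hov}, Lemma 2.3.2): one bounds the cardinality of the underlying graded set, uses that filtered colimits of chain complexes are computed degreewise on underlying sets to realize all elements and all linearity/differential relations at a single stage below $\lambda$, and handles injectivity the same way. The paper itself supplies no independent proof, so there is nothing further to compare.
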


Let $\kappa$ be a cardinal.
Let $\mathcal{F}=\{f_i:A_i\rightarrow B_i\}_{i\in I}$ be a set of morphisms of $\mathcal{C}$. We consider a morphism
$g:X\rightarrow Y$ of $\mathcal{C}$ for which we want to produce a factorization $X\rightarrow X'\rightarrow Y$, such
that $X'\rightarrow Y$ has the right lifting property with respect to the morphisms of $\mathcal{F}$.
There is a recursive construction providing the following commutative diagram:
\[
\xymatrix{ X \ar[d]_g \ar[r]^-{i_1} & G^1(\mathcal{F},g) \ar[d]_{g_1} \ar[r]^-{i_2} &...\ar[r]^-{i_{\beta}} & G^{\beta}(\mathcal{F},g)\ar[d]_{g_{\beta}}
\ar[r]^-{i_{\beta+1}} &... \\ Y \ar@{=}[r] & Y \ar@{=}[r] &...\ar@{=}[r] & Y \ar@{=}[r] &...}
\]
where the upper line is a $\lambda$-sequence for a certain $\kappa$-filtered ordinal $\lambda$.
In this recursive procedure, each $i_{\beta}$ is obtained by a pushout of the form
\[
\xymatrix{\bigoplus_{\alpha}A_{\alpha} \ar[d]_{\bigoplus_{\alpha}f_{\alpha}} \ar[r] & G^{\beta-1}(\mathcal{F},g)
\ar[d]^{i_{\beta}} \\ \bigoplus_{\alpha}B_{\alpha} \ar[r] & G^{\beta}(\mathcal{F},g) }
\]
where the $f_{\alpha}$ are morphisms of $\mathcal{F}$. The category $\mathcal{C}$ is supposed to admit
small colimits, so we can consider the infinite composite $i_{\infty}:X\rightarrow G^{\infty}(\mathcal{F},g)$ of the sequence of maps
\[
\xymatrix{ X \ar[r]^-{i_1} & G^1(\mathcal{F},p) \ar[r]^-{i_2} &...\ar[r]^-{i_{\beta}} & G^{\beta}(\mathcal{F},p)
\ar[r]^-{i_{\beta+1}} &...\ar[r] & G^{\infty}(\mathcal{F},p)}
\]
where $G^{\infty}(\mathcal{F},p)=colim_{\beta<\lambda}G^{\beta}(\mathcal{F},p)$ is the colimit of this system.
A morphism like $i_{\infty}:X\rightarrow G^{\infty}(\mathcal{F},g)$, that is obtained by a (possibly transfinite) composition of pushouts
of maps of $\mathcal{F}$, is called a relative $\mathcal{F}$-cell complex.
By universal property of the colimit, the morphism $g$ has a factorization $g=g_{\infty}\circ i_{\infty}$ where
$g_{\infty}:G^{\infty}(\mathcal{F},g)\rightarrow Y$.

\begin{thm}
(cf. \cite{Hov}, Theorem 2.1.14)
In the preceding situation, suppose that for every $i\in I$, the object $A_i$ is $\kappa$-small in $\mathcal{C}$
with respect to relative $\mathcal{F}$-cell complexes.
Then the morphism $g_{\infty}$ defined above has the right lifting property with respect to the morphisms of $\mathcal{F}$.
\end{thm}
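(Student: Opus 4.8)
The plan is to follow the classical argument for the small object argument as in \cite{Hov}, the only point requiring attention being that smallness is invoked relative to $\mathcal{F}$-cell complexes rather than to all morphisms. I must show that $g_{\infty}$ has the right lifting property with respect to every $f_i:A_i\rightarrow B_i$ in $\mathcal{F}$. So I would start from an arbitrary commutative square with top edge $u:A_i\rightarrow G^{\infty}(\mathcal{F},g)$, bottom edge $v:B_i\rightarrow Y$, left edge $f_i$ and right edge $g_{\infty}$, and produce a diagonal filler $B_i\rightarrow G^{\infty}(\mathcal{F},g)$.

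First I would observe that each structure map $i_{\beta}:G^{\beta-1}(\mathcal{F},g)\rightarrow G^{\beta}(\mathcal{F},g)$ is a pushout of a coproduct of morphisms of $\mathcal{F}$, hence a relative $\mathcal{F}$-cell complex; consequently the $\lambda$-sequence defining $G^{\infty}(\mathcal{F},g)=colim_{\beta<\lambda}G^{\beta}(\mathcal{F},g)$ has all of its maps in the class of relative $\mathcal{F}$-cell complexes. Since $A_i$ is assumed $\kappa$-small with respect to this very class and $\lambda$ is $\kappa$-filtered, the canonical map $colim_{\beta<\lambda}Mor_{\mathcal{C}}(A_i,G^{\beta}(\mathcal{F},g))\rightarrow Mor_{\mathcal{C}}(A_i,G^{\infty}(\mathcal{F},g))$ is a bijection. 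Therefore the top map $u$ factors through some stage as $A_i\rightarrow G^{\beta}(\mathcal{F},g)\rightarrow G^{\infty}(\mathcal{F},g)$ with $\beta<\lambda$; I denote by $u_{\beta}:A_i\rightarrow G^{\beta}(\mathcal{F},g)$ this factorization.

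Next I would note that, because $g_{\beta}=g_{\infty}\circ(G^{\beta}(\mathcal{F},g)\rightarrow G^{\infty}(\mathcal{F},g))$, the pair $(u_{\beta},v)$ already forms a commutative square from $f_i$ to $g_{\beta}:G^{\beta}(\mathcal{F},g)\rightarrow Y$: indeed $g_{\beta}\circ u_{\beta}=g_{\infty}\circ u=v\circ f_i$. By construction of the recursive procedure, the map $i_{\beta+1}$ is obtained by pushing out the coproduct of all morphisms of $\mathcal{F}$ indexed by exactly such commutative squares over $g_{\beta}$, so our square occurs as one summand. The corresponding leg of the pushout supplies a morphism $B_i\rightarrow G^{\beta+1}(\mathcal{F},g)$, and I would define the candidate filler $h:B_i\rightarrow G^{\infty}(\mathcal{F},g)$ by composing it with the canonical map $G^{\beta+1}(\mathcal{F},g)\rightarrow G^{\infty}(\mathcal{F},g)$.

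Finally I would check the two triangles. The upper triangle $h\circ f_i=u$ holds because the pushout square at stage $\beta+1$ identifies the restriction along $f_i$ of the $B_i$-component with $u_{\beta}$, which maps to $u$ in the colimit. The lower triangle $g_{\infty}\circ h=v$ holds because the maps $g_{\beta+1}:G^{\beta+1}(\mathcal{F},g)\rightarrow Y$ restrict to $v$ on the $B_i$-component, the pushout being formed over $Y$, and are compatible with $g_{\infty}$ under the colimit. This produces the desired lift and establishes the right lifting property. The only genuinely delicate point is the very first reduction: one must ensure that the maps of the defining $\lambda$-sequence indeed belong to the class against which $A_i$ is assumed small, and this is precisely why the structure maps $i_{\beta}$ are arranged to be relative $\mathcal{F}$-cell complexes; everything after the smallness step is a routine diagram chase through the pushouts.
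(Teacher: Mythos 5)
Your proof is correct and is exactly the standard small object argument as in Hovey's Theorem 2.1.14, which the paper cites without reproducing the proof; in particular you correctly isolate the one delicate point, namely that the structure maps $i_{\beta}$ are relative $\mathcal{F}$-cell complexes so that the restricted smallness hypothesis suffices to factor the top map through a stage $G^{\beta}(\mathcal{F},g)$ with $\beta+1<\lambda$. Nothing further is needed.
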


In the remaining sections of our paper, in order to deal with morphisms of the form of $i_{\infty}$
we will need the following properties:

\begin{lem}(see \cite{Hir})
(1) Let us consider a pushout of the form
\[
\xymatrix{ K \ar[r]^f \ar[d]_i & K' \ar[d]^j \\ L \ar[r]_g & L' }
\]
in a category $\mathcal{C}$ admitting small colimits. Suppose that $i$ has the left lifting property
with respect to a given family $\mathcal{F}$ of morphisms of $\mathcal{C}$. Then $j$ has also the
left lifting property with respect to $\mathcal{F}$. Another way to state this result is to say that
the left lifting property with respect to a given family of morphisms is invariant under cobase change.

(2) Let us consider a $\lambda$-sequence
\[
\xymatrix{ G^0 \ar[r]^-{i_1} & G^1 \ar[r]^-{i_2} &...\ar[r]^-{i_{\beta}} & G^{\beta} \ar[r]^-{i_{\beta+1}} &...\ar[r] & colim_{\beta<\lambda} G^{\beta}=G^{\infty}}.
\]
Let us note $i_{\infty}:G^0\rightarrow G^{\infty}$ the transfinite composite of the $i_{\beta}$. If for every ordinal $\beta<\lambda$, the morphism
$i_{\beta}$ has the left lifting property with respect to a given family $\mathcal{F}$ of morphisms of $\mathcal{C}$, then so does
$i_{\infty}$.
\end{lem}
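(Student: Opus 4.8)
The plan is to prove both statements by direct diagram chases, using nothing beyond the universal properties of pushouts and transfinite colimits together with the definition of the left lifting property; no genuine model-categorical input is required.

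For part (1), I would begin with an arbitrary lifting problem against a map $p\colon A\to B$ in $\mathcal{F}$, namely a commutative square whose left edge is $j$, whose top edge is some $u\colon K'\to A$, and whose bottom edge is some $v\colon L'\to B$. Precomposing the top edge with $f$ and the bottom edge with $g$ produces a commutative square with left edge $i$; since $i$ has the left lifting property with respect to $\mathcal{F}$, this square admits a diagonal filler $h\colon L\to A$. The two maps $u\colon K'\to A$ and $h\colon L\to A$ agree on $K$, since $u\circ f=h\circ i$, so the universal property of the pushout yields a unique map $w\colon L'\to A$ with $w\circ j=u$ and $w\circ g=h$. It then remains to check that $w$ fills the original square, i.e. that $p\circ w=v$; this follows from the uniqueness clause of the pushout, because $p\circ w$ and $v$ both restrict to $p\circ u=v\circ j$ along $j$ and to $p\circ h=v\circ g$ along $g$.

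For part (2), I would set up an arbitrary lifting problem for $i_{\infty}$ against a map $p\colon A\to B$ in $\mathcal{F}$, given by a top edge $u\colon G^0\to A$ and a bottom edge $v\colon G^{\infty}\to B$, and construct a compatible family of partial lifts $h_{\beta}\colon G^{\beta}\to A$ by transfinite induction on $\beta<\lambda$. At each stage I would require that $h_{\beta}$ lift the restriction of $v$ along the structural map $G^{\beta}\to G^{\infty}$ and that the $h_{\beta}$ be compatible with the maps of the $\lambda$-sequence. The base case sets $h_0=u$; the successor step solves the lifting problem whose left edge is $i_{\beta+1}\colon G^{\beta}\to G^{\beta+1}$, which has the left lifting property with respect to $\mathcal{F}$ by hypothesis, producing $h_{\beta+1}$ extending $h_{\beta}$; and the limit step assembles the $h_{\gamma}$, $\gamma<\beta$, into $h_{\beta}$ via the universal property of the colimit $G^{\beta}=\mathrm{colim}_{\gamma<\beta}G^{\gamma}$. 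Passing to the colimit over all $\beta<\lambda$ produces a map $G^{\infty}\to A$ which, by construction, is the desired diagonal filler.

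The diagram chase in part (1) is routine. I expect the main subtlety to lie in the limit-ordinal step of part (2), where one must verify that the compatible family genuinely induces a map on $G^{\beta}=\mathrm{colim}_{\gamma<\beta}G^{\gamma}$ that still lifts the appropriate restriction of $v$ and remains compatible so that the induction can continue. The crux is to keep careful track of the restrictions of $v$ along the structural maps $G^{\gamma}\to G^{\beta}\to G^{\infty}$, but once these are recorded the verification again reduces to the universal property of the colimit.
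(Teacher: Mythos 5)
Your proof is correct and is the standard argument: the paper itself gives no proof of this lemma, merely citing Hirschhorn, and your two diagram chases (the pushout universal property for (1), transfinite induction with the colimit universal property at limit ordinals for (2)) are exactly the proofs found there. The one point worth recording explicitly in part (2) is the verification, at a limit ordinal $\beta$, that $p\circ h_{\beta}=v|_{G^{\beta}}$, which follows from the uniqueness clause of the universal property of $G^{\beta}=\mathrm{colim}_{\gamma<\beta}G^{\gamma}$ since both sides agree after restriction to each $G^{\gamma}$; you flag this as the subtlety and your resolution is the right one.
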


It is time now to give a concrete example of model category. Of course, topological spaces provide the initial
example from which the theory of model categories arised. However, the example we will use to illustrate these notions
is that of chain complexes. This choice is motivated by two reasons. Firstly, this will be the base category for
the remaining part of our paper. Secondly, the model category structures of algebras and coalgebras over operads
will be transfered from this one via adjunctions.

\begin{thm}
(cf. \cite{DS}, Theorem 7.2)
The category $Ch_{\mathbb{K}}$ of chain complexes over a field $\mathbb{K}$ forms a cofibrantly generated model category
such that a morphism $f$ of $Ch_{\mathbb{K}}$ is

(i) a weak equivalence if for every $n\geq 0$, the induced map $H_n(f)$ in homology is an isomorphism.

(ii) a fibration if for every $n> 0$, the map $f_n$ is surjective.

(iii) a cofibration if for every $n\geq 0$, the map $f_n$ is injective.
\end{thm}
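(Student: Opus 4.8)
The plan is to exhibit explicit generating sets and invoke the recognition criterion for cofibrantly generated model structures (\cite{Hov}, Theorem 2.1.19). For $n\geq 0$ let $S^n$ be the complex with $\mathbb{K}$ concentrated in degree $n$ and zero differential, and for $n\geq 1$ let $D^n$ be the complex with $\mathbb{K}$ in degrees $n$ and $n-1$ with the identity as differential between them (I set $S^{-1}=0$, so that $D^0=S^0$). I take as generating cofibrations the set $I=\{S^{n-1}\rightarrow D^n\}_{n\geq 0}$ and as generating acyclic cofibrations the set $J=\{0\rightarrow D^n\}_{n\geq 1}$, with $W$ the class of quasi-isomorphisms. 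To apply the recognition theorem I must check that $Ch_{\mathbb{K}}$ is bicomplete, that $W$ satisfies the two-out-of-three axiom and is stable under retracts, that the domains of $I$ and $J$ are small, and the compatibility conditions relating the relative cell complexes and the injectives of $I$ and $J$.

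The easy verifications come first. Bicompleteness of $Ch_{\mathbb{K}}$ is standard, limits and colimits being computed degreewise. The two-out-of-three property and the stability under retracts of $W$ follow at once from the functoriality of homology. Smallness of every domain is furnished by Lemma 1.12, so that the small object argument (Theorem 1.11) applies to both $I$ and $J$. Next I identify the injectives. The lifting problem of a map $p\colon X\rightarrow Y$ against $0\rightarrow D^n$ amounts precisely to lifting an element of $Y_n$ through $p_n$, so the class $J\text{-}\mathrm{inj}$ of maps with the right lifting property against $J$ is exactly the class of maps surjective in each positive degree, i.e. the candidate fibrations. Similarly the lifting problem against $S^{n-1}\rightarrow D^n$ translates into lifting a cycle together with a chosen bounding chain, and a direct inspection shows that $I\text{-}\mathrm{inj}$ is the class of degreewise surjective maps with acyclic kernel, that is, the surjective quasi-isomorphisms.

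It remains to establish the compatibility conditions. A pushout of $0\rightarrow D^n$ along a map out of $G$ is the inclusion $G\rightarrow G\oplus D^n$, which is a degreewise injection and, since $D^n$ is contractible, a quasi-isomorphism. Both properties persist under transfinite composition: injectivity by Lemma 1.13, and the quasi-isomorphism property because homology commutes with the filtered colimits defining relative cell complexes (filtered colimits of vector spaces being exact). Hence every relative $J$-cell complex lies in $W\cap I\text{-}\mathrm{cof}$. Dually, a relative $I$-cell complex is a degreewise injection, and conversely any degreewise injection has the left lifting property against the surjective quasi-isomorphisms; thus $I\text{-}\mathrm{cof}$ is exactly the class of degreewise injections, the candidate cofibrations. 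Finally $I\text{-}\mathrm{inj}\subseteq W\cap J\text{-}\mathrm{inj}$ is immediate from the descriptions above, and the reverse inclusion $W\cap J\text{-}\mathrm{inj}\subseteq I\text{-}\mathrm{inj}$ asserts that a fibration which is a quasi-isomorphism is a surjective quasi-isomorphism.

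The main obstacle is this last inclusion, together with the lifting claim used to compute $I\text{-}\mathrm{cof}$: both reduce to the linear-algebra fact that every short exact sequence of $\mathbb{K}$-vector spaces splits. Concretely, I must show that a map $p$ surjective in positive degrees with $H_*(p)$ an isomorphism is in fact surjective in degree $0$ as well and has contractible kernel; this is where the degree-$0$ bookkeeping and the long exact homology sequence of the kernel do the real work, and it is the only step requiring genuine care. Once it is in place, the recognition theorem assembles the three classes into a cofibrantly generated model structure, with the factorization axiom produced by the small object argument applied to $I$ and $J$, matching the stated characterizations of weak equivalences, fibrations and cofibrations.
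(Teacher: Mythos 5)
Your proposal is correct and follows the standard argument: the paper itself offers no proof of this statement, simply citing Theorem 7.2 of Dwyer--Spalinski, and your choice of generating (acyclic) cofibrations $S^{n-1}\rightarrow D^n$ and $0\rightarrow D^n$ together with the recognition theorem is exactly the argument of that reference, specialized to a field so that cofibrations become all degreewise injections. You also correctly isolate the only delicate point, namely that a quasi-isomorphism surjective in positive degrees is automatically surjective in degree $0$ and has contractible kernel, which over a field follows from degreewise splitting.
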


To conclude this section, let us note that the commonly used small object argument, for instance to prove theorem 1.14
but also for various other examples (like algebras over operads), is Proposition 7.17 in \cite{DS}. That is,
use the simplifying assumption of smallness with respect to all morphisms.
We will need a more refined version for coalgebras an operad.

\medskip{}

\section{The model category of coalgebras over an operad}

We work in the full subcategory $Ch_{\mathbb{K}}^+$ of $Ch_{\mathbb{K}}$
whose objects are the chain complexes $C$ such that $C_0=0$, i.e the positively graded chain complexes. The category $Ch_{\mathbb{K}}^+$
is actually a model subcategory of $Ch_{\mathbb{K}}$. We suppose that $P$ is an operad
in $Vect_{\mathbb{K}}$ such that the $P(n)$ are finite dimensional, $P(0)=0$ and $P(1)=\mathbb{K}$. Note that the commonly used operads
satisty this hypothesis, for instance $As$ (for the associative algebras), $Com$ (for the commutative associative algebras), $Lie$ 
(for the Lie algebras), $Pois$ (for the Poisson algebras). There are two difficulties appearing here. Firstly, our
operad is not defined exactly in the same category as our algebras. Secondly, the category $Ch_{\mathbb{K}}^+$ inherits the symmetric
monoidal structure of $Ch_{\mathbb{K}}$ but not the unit (which is $\mathbb{K}$ concentrated in degree $0$).
However, $Vect_{\mathbb{K}}$ acts on $Ch_{\mathbb{K}}^+$ via the usual tensor product of chain complexes,
when we identify $Vect_{\mathbb{K}}$ with the subcategory of $Ch_{\mathbb{K}}$ consisting in complexes
concentrated in degree $0$ .
The convenient notion to deal with such situations is the one of symmetric monoidal category over a base category.
We refer the reader to \cite{Fre3}, chapter 1, for a precise definition and the associated properties.
In our case, we work in the reduced symmetric monoidal category $Ch_{\mathbb{K}}^+$ over the base $Vect_{\mathbb{K}}$
(see also \cite{Fre3}, 1.1.17). As shown in \cite{Fre3}, all the usual definitions and properties of operads and
their algebras hold in the reduced setting. The situation is analogous for cooperads and their coalgebras.
The model category structure on coalgebras is given by the following theorem:
\begin{thm}
The category of $P$-coalgebras $^P Ch_{\mathbb{K}}^+$ inherits a cofibrantly generated model category structure
such that a morphism $f$ of $^P Ch_{\mathbb{K}}^+$ is

(i) a weak equivalence if $U(f)$ is a weak equivalence in $Ch_{\mathbb{K}}^+$;

(ii) a cofibration if $U(f)$ is a cofibration in $Ch_{\mathbb{K}}^+$;

(iii) a fibration if $f$ has the right lifting property with respect to acyclic cofibrations.
\end{thm}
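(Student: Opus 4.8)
The plan is to transfer the structure of $Ch_{\mathbb{K}}^+$ (Theorem 1.16) along the forgetful functor $U\colon {}^P Ch_{\mathbb{K}}^+\to Ch_{\mathbb{K}}^+$, whose right adjoint is the cofree coalgebra functor $P^*$ of Theorem 1.4. This is the delicate direction of transfer: the weak equivalences and cofibrations are created by the \emph{left} adjoint $U$, whereas the fibrations are not created and must instead be detected by a lifting property. Axioms M2 and M3 are then immediate, since two-out-of-three and closure under retracts for weak equivalences and for cofibrations are inherited verbatim from $Ch_{\mathbb{K}}^+$, while the fibrations, being defined by a right lifting property, are automatically closed under retracts. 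For M1 I would use that ${}^P Ch_{\mathbb{K}}^+$ is comonadic over $Ch_{\mathbb{K}}^+$ (the equivalence recorded after Theorem 1.4), so that $U$ creates all colimits; limits are then obtained from the cofree functor $P^*$, in complete analogy with the construction of colimits for algebras over an operad from the free functor. Finally, one half of M4, namely that acyclic cofibrations lift against fibrations, holds by the very definition of the fibrations, so the whole weight of the proof falls on M5 and on the complementary half of M4.

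For M5 I would supply the cofibrantly generated data by hand. Since every $P$-coalgebra is the filtered union of its finite subcoalgebras, the degreewise injections between coalgebras that are finite dimensional form, up to isomorphism, a \emph{set} $I$; let $J\subseteq I$ consist of those injections that are in addition weak equivalences. The decisive preliminary is a smallness statement: such finite coalgebras are $\kappa$-small \emph{with respect to systems of injections}, which is the essential point isolated in Lemma 2.16. Its proof rests on the fact that a coalgebra morphism from a finite coalgebra into a colimit taken along monomorphisms factors through a finite stage, the factorization being forced to respect the coalgebra structure precisely because the transition maps are injective. This is exactly the step that fails for arbitrary morphisms, and it is why the usual simplifying hypothesis of smallness with respect to all maps (Proposition 7.17 of \cite{DS}) is unavailable here.

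With this in hand I would run the small object argument of Theorem 1.14 in its injection-restricted form. Given $f\colon X\to Y$, one builds the transfinite tower of Section 1.3 by attaching cells from $I$ (respectively $J$) along pushouts over a $\kappa$-filtered ordinal of injections. Because $U$ creates colimits, each cell attachment and each transfinite composite is computed on underlying complexes, so the left factor is a transfinite composite of cobase changes of injections, hence a cofibration; for the tower built from $J$ the underlying map is moreover a composite of cobase changes of acyclic cofibrations of $Ch_{\mathbb{K}}^+$, hence a weak equivalence by Theorem 1.16. The enveloping cooperad $U_{P^*}(A)$ enters here as the computational device expressing the coproduct of a coalgebra with a cofree coalgebra, which is what keeps explicit control of these cell attachments and of their homology, following the argument line of \cite{GG}. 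Theorem 1.14, applied with smallness along injection systems, then shows that the residual map $g_\infty$ has the right lifting property against $I$ (respectively $J$), yielding the two factorizations once one identifies the $I$-injective maps with the acyclic fibrations and the $J$-injective maps with the fibrations. The remaining half of M4 follows formally by the retract argument: every cofibration is a retract of a relative $I$-cell complex, this complex has the left lifting property against the $I$-injectives by Lemma 1.15, and that property passes to retracts.

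The main obstacle is concentrated in M5, and more precisely in the smallness phenomenon: finite coalgebras are small only along injection systems, so the small object argument must be modified to run over a $\kappa$-filtered ordinal of injections, and one must check throughout that the pushouts and transfinite composites it produces remain injections and carry the expected homology. Controlling the latter — in particular verifying that the maps produced by the $J$-cell construction are genuine weak equivalences and that the $I$-injectives are exactly the acyclic fibrations — is where the enveloping cooperad does the real work. I expect this bookkeeping, rather than any of the formal axioms, to be the hard part of the proof.
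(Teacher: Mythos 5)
Your overall architecture (transfer along the left adjoint $U$, colimits created by $U$, limits from the cofree functor, M2--M3 formal, M4 by the retract argument, M5 by a small object argument restricted to injection systems) matches the paper's, and your identification of the smallness-along-injections issue as the crux of Lemma 2.16 is exactly right. But there is a genuine gap in your choice of generating acyclic cofibrations. You take $J$ to be the acyclic injections between \emph{finite-dimensional} coalgebras. With that choice, the small object argument produces a factorization whose right-hand map has the right lifting property against $J$ only, and nothing forces such a map to lift against \emph{all} acyclic cofibrations, which is what the definition of fibration demands. The paper's Proposition 2.15 shows that the correct generating acyclic cofibrations are the acyclic injections $A\hookrightarrow B$ where $B$ has a \emph{countable} homogeneous basis, not a finite-dimensional one; the reason is Lemma 2.14, which, given an acyclic cofibration $C\to D$ and $x\in D$, produces a subcoalgebra $B\ni x$ with $C\cap B\hookrightarrow B$ acyclic only after an infinite iteration $B(1)\subseteq B(2)\subseteq\cdots$ that kills homology classes of $B(n-1)/(C\cap B(n-1))$ at stage $n$. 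A finite-dimensional $B$ with this property does not exist in general, so the finite-dimensional acyclic injections do not detect fibrations. Moreover, the identification of the $I$-injectives with the acyclic fibrations and of the $J$-injectives with the fibrations, which you treat as routine, is itself a substantial Zorn's-lemma argument (Proposition 2.15) resting on Lemmas 2.13 and 2.14; it cannot be waved through.

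A secondary misplacement: the enveloping cooperad does not serve to control the homology of cell attachments in the small object argument. In the paper it is used for a \emph{direct} proof of the (cofibration, acyclic fibration) factorization: one embeds $U(D)$ into an acyclic complex $V$ and factors $f\colon D\to C$ as $D\to C\times P^*(V)\to C$, the point being that $C\times P^*(V)\cong U_{P^*}(C)(V)$ as evaluation of a $\Sigma$-module on an acyclic complex, whence the projection is a weak equivalence with the right lifting property against all cofibrations (Corollary 2.10 and Proposition 2.12). The small object argument is reserved for the (acyclic cofibration, fibration) factorization, where the enveloping cooperad plays no role; there the work is done by the stability of acyclic cofibrations of $Ch_{\mathbb{K}}^+$ under pushout and transfinite composition, together with Lemma 2.16. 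Your plan to obtain both factorizations by cell attachment is viable for the first one (the paper notes this in Remark 2.17), but as written your proposal neither supplies the correct set $J$ nor the argument that its injectives are the fibrations.
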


From now on, we use the numbering of \cite{Hir} (Definition 7.1.3) for the axioms of a model category.
The three class of morphisms defined in this theorem are clearly stable by composition and contain the identity maps.
Axioms M2 and M3 are clear, and M4 (ii) is obvious by definition of the fibrations. It remains to prove axioms
M1, M4 (i) and M5.

\subsection{Proof of M1}

The forgetful functor creates the small colimits. The proof of this fact is exactly the same as the proof
of the existence of small limits in the $P$-algebras case. To prove the existence of small limits in $^P Ch_{\mathbb{K}}^+$,
we use the following categorical result:
\begin{thm}(see \cite{MLa2})
Let $\mathcal{C}$ be a category. If $\mathcal{C}$ admits the coreflexive equalizers of every pair of arrows
and all small coproducts, then $\mathcal{C}$ admits all the small limits.
\end{thm}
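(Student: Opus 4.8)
The plan is to realise an arbitrary small limit as a coreflexive equalizer of a map between two products, which is the classical reduction that makes coreflexive equalizers (rather than arbitrary equalizers) the natural hypothesis. Given a diagram $F\colon J\to\mathcal{C}$ with $J$ small, I would consider the canonical parallel pair
\[
\prod_{j\in\mathrm{Ob}\,J}F(j)\;\rightrightarrows\;\prod_{(u\colon j\to j')\in\mathrm{Mor}\,J}F(j'),
\]
whose component at a morphism $u\colon j\to j'$ is $F(u)\circ\pi_{j}$ for the first map and $\pi_{j'}$ for the second. A morphism $c\colon T\to\prod_{j}F(j)$ equalizes this pair precisely when its components $\pi_{j}\circ c$ satisfy $F(u)\circ(\pi_{j}\circ c)=\pi_{j'}\circ c$ for every $u$, i.e.\ form a cone over $F$; hence the equalizer of the pair, if it exists, is $\lim F$ together with its universal cone.

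The key point is that this pair is coreflexive: both maps admit as a common retraction the projection of the target onto the factors indexed by the identity morphisms $\mathrm{id}_{j}$, since composing that projection with either map returns the identity of $\prod_{j}F(j)$. Consequently, as soon as the two displayed products exist, $\lim F$ is the coreflexive equalizer of the pair, which is furnished by the hypothesis. This reduces the whole theorem to the single task of constructing small products in $\mathcal{C}$.

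It therefore remains to produce small products, and this is the step I expect to be the main obstacle, since the products must be extracted from the data actually available, namely the small coproducts together with coreflexive equalizers. The model to keep in mind is the poset case, where coreflexive equalizers are automatic and the product (meet) of a family is recovered as the coproduct (join) of its lower bounds; the work lies in categorifying this passage, first for binary and finite products and then for arbitrary small families, and in checking that the coproduct, corrected by a suitable coreflexive equalizer, does satisfy the universal property of the product. Once the products are in hand, the remaining verifications — coreflexivity of the canonical pair, the bijective correspondence between cones over $F$ and maps equalizing the pair, and the induced universal property of $\lim F$ — are routine diagram chases.
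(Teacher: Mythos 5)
Your first two paragraphs reconstruct exactly the argument that the citation to \cite{MLa2} is meant to invoke: realize $\lim F$ as the equalizer of the canonical pair $\prod_{j}F(j)\rightrightarrows\prod_{u\colon j\to j'}F(j')$, and observe that this pair is coreflexive, the common retraction being the projection onto the factors indexed by the identities (indeed $\pi_{j}\circ s\circ f=F(\mathrm{id}_{j})\circ\pi_{j}=\pi_{j}$ for either map $f$ of the pair). That part is correct and complete, and it is the whole content of the theorem as it is actually used: coreflexive equalizers together with small \emph{products} give all small limits.

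The step you flag as ``the main obstacle'' --- manufacturing small products out of small coproducts and coreflexive equalizers --- is not merely hard, it is impossible: the statement as printed is false, and the word ``coproducts'' is a typo for ``products''. Note first that coreflexive equalizers are limits, not colimits, so the printed hypothesis mixes one colimit condition with one limit condition; your poset heuristic already fails at large posets, which is where the counterexample lives. Take the class-poset consisting of all ordinals with their usual order, two incomparable elements $a,b$ above every ordinal, and a top element $\top$: every small family has a join (suprema of small sets of ordinals exist, and $\top$ caps anything involving both $a$ and $b$), and in a poset every parallel pair is equal, so all (coreflexive) equalizers exist trivially; yet $a\times b$ does not exist, since the lower bounds of $\{a,b\}$ form the class of all ordinals, which has no greatest element. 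That the intended hypothesis is ``products'' is confirmed by the paper itself, which immediately after this theorem verifies that ${}^{P}Ch_{\mathbb{K}}^{+}$ admits ``the coreflexive equalizers and the small products'' (Lemmas 2.3 and 2.4, the product of coalgebras being built in Lemma 2.4 as a coreflexive equalizer between cofree coalgebras). With the corrected hypothesis, your first two paragraphs constitute the entire proof, and your third paragraph should simply be deleted.
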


Now let us prove that $^P Ch_{\mathbb{K}}^+$ admits the coreflexive equalizers and the small products.
\begin{lem}
Let $d^0,d^1:A\rightarrow B$ be two morphisms in $^P Ch_{\mathbb{K}}^+$ and $s^0:B\rightarrow A$
a morphism of $Ch_{\mathbb{K}}^+$  such that $s^0\circ d^0 = s^0\circ d^1 = id_A$.
Then $ker(d^0-d^1)$ is the coreflexive equalizer of $(d^0,d^1)$ in $^P Ch_{\mathbb{K}}^+$.
\end{lem}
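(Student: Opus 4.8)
The plan is to show that the forgetful functor $U\colon {}^P Ch_{\mathbb{K}}^+\to Ch_{\mathbb{K}}^+$ creates coreflexive equalizers; concretely, I would prove that the kernel $K:=\ker(d^0-d^1)$ formed in $Ch_{\mathbb{K}}^+$ (it exists and is computed degreewise, with inclusion $i\colon K\to A$) carries a unique $P$-coalgebra structure turning $i$ into a morphism of ${}^P Ch_{\mathbb{K}}^+$, and that $(K,i)$ is then the equalizer of $(d^0,d^1)$ in ${}^P Ch_{\mathbb{K}}^+$. Since $d^0,d^1$ are $P$-coalgebra maps and $\mathbb{K}$-linearity makes $d^0-d^1$ a morphism of $Ch_{\mathbb{K}}^+$, everything reduces to transporting the structure map $\rho_A\colon A\to P^*(A)$ along $i$.

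The crucial point, which is also the main obstacle, is that the comonad $P^*$ preserves this coreflexive equalizer, i.e. that $P^*(i)\colon P^*(K)\to P^*(A)$ is the equalizer of $P^*(d^0),P^*(d^1)\colon P^*(A)\rightrightarrows P^*(B)$ in $Ch_{\mathbb{K}}^+$. Because $P^*(V)=\bigoplus_{r\ge 1}P(r)^*\otimes_{\Sigma_r}V^{\otimes r}$ and, over a field of characteristic zero, both $W\otimes(-)$ and the coinvariants $(-)_{\Sigma_r}$ are exact, it suffices to show that each tensor power $(-)^{\otimes r}$ preserves the coreflexive equalizer, i.e. $K^{\otimes r}=\ker\big((d^0)^{\otimes r}-(d^1)^{\otimes r}\big)$ inside $A^{\otimes r}$. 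This is exactly where the common section $s^0$ is indispensable (ordinary tensor powers do not preserve kernels). For $r=2$, writing $\delta:=d^0-d^1$ one has
\[
d^0\otimes d^0 - d^1\otimes d^1 = \delta\otimes d^0 + d^1\otimes\delta .
\]
Applying $s^0\otimes\mathrm{id}_B$ and using $s^0 d^1=\mathrm{id}_A$, $s^0\delta=0$ sends any $x$ in the kernel into $\ker(\mathrm{id}_A\otimes\delta)=A\otimes K$; applying $\mathrm{id}_B\otimes s^0$ and using $s^0 d^0=\mathrm{id}_A$ sends it into $\ker(\delta\otimes\mathrm{id}_A)=K\otimes A$. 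Hence $x\in(A\otimes K)\cap(K\otimes A)=K\otimes K$, the reverse inclusion being clear. The general case follows by the same slotwise argument, or by induction since $(-)\otimes A^{\otimes(r-1)}$ and $A\otimes(-)$ each preserve coreflexive equalizers.

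Granting this, I would finish as follows. Since $d^0,d^1$ are morphisms of $P$-coalgebras and $d^0\circ i=d^1\circ i$, we get $P^*(d^0)\circ(\rho_A\circ i)=\rho_B\circ d^0\circ i=\rho_B\circ d^1\circ i=P^*(d^1)\circ(\rho_A\circ i)$; by the universal property of the equalizer $P^*(K)$, the map $\rho_A\circ i$ factors uniquely as $P^*(i)\circ\rho_K$ for a unique $\rho_K\colon K\to P^*(K)$. The coassociativity and counit axioms for $\rho_K$ are then forced: each side becomes equal after composing with the monomorphisms $P^*(i)$, resp. $P^*(P^*(i))$, where I use that $P^*$ preserves the monomorphism $i$ (again by exactness over $\mathbb{K}$), and where the corresponding axioms for $\rho_A$ apply. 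Finally, for the universal property in ${}^P Ch_{\mathbb{K}}^+$: given a morphism $h\colon T\to A$ of ${}^P Ch_{\mathbb{K}}^+$ with $d^0 h=d^1 h$, its underlying map factors uniquely through $K$ in $Ch_{\mathbb{K}}^+$ as $h=i\circ\tilde h$, and $\tilde h$ is automatically a coalgebra morphism because $P^*(i)\circ\rho_K\circ\tilde h=\rho_A\circ i\circ\tilde h=\rho_A\circ h=P^*(h)\circ\rho_T=P^*(i)\circ P^*(\tilde h)\circ\rho_T$, so monicity of $P^*(i)$ yields $\rho_K\circ\tilde h=P^*(\tilde h)\circ\rho_T$. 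This exhibits $K=\ker(d^0-d^1)$ as the coreflexive equalizer of $(d^0,d^1)$ in ${}^P Ch_{\mathbb{K}}^+$.
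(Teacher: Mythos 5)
Your proof is correct, and its computational heart is exactly the paper's: the slot\-/wise insertion of the common section, using $s^0\circ d^0=s^0\circ d^1=\mathrm{id}_A$ and $s^0\circ(d^0-d^1)=0$ to show that an element killed by the difference of tensor powers already lies in $\bigcap_i A\otimes\cdots\otimes\ker(d^0-d^1)\otimes\cdots\otimes A=\ker(d^0-d^1)^{\otimes n}$. The packaging differs: the paper works with the operadic cooperations $p^*\colon A\to A^{\otimes n}$ and checks directly, for $\alpha\in\ker(d^0-d^1)$, that $p^*(\alpha)$ lands in each slot\-/wise kernel (moving $d^0$ past $p^*$ by naturality and then using $d^0(\alpha)=d^1(\alpha)$), concluding that $\ker(d^0-d^1)$ is a sub\-/$P$\-/coalgebra; you instead prove the more structural statement that $(-)^{\otimes r}$, hence the comonad $P^*$, preserves coreflexive equalizers (using exactness of $W\otimes(-)$ and of $\Sigma_r$\-/coinvariants in characteristic zero), and then transport $\rho_A$ along the universal property of $P^*(K)$. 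Your route is somewhat longer but more self\-/contained: it makes explicit the verification of coassociativity, counit, and the universal property in ${}^P Ch_{\mathbb{K}}^+$, which the paper leaves implicit once the sub\-/coalgebra claim is established. Both arguments are valid and rest on the same use of the coreflexivity hypothesis, which is indeed indispensable since plain tensor powers do not preserve kernels.
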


\begin{proof}
The subspace $ker(d^0-d^1)\subset A$ is the coreflexive equalizer of $(d^0,d^1)$ in $Ch_{\mathbb{K}}^+$.
Moreover, it is a sub-$P$-coalgebra of $A$ and the inclusion is obviously a $P$-coalgebras morphism.
Indeed, let $\alpha\in A$ such that $d^0(\alpha)=d^1(\alpha)$, i.e $\alpha\in ker(d^0-d^1)$.
We want to prove that $ker(d^0-d^1)$ is stable under the cooperations of $A$. That is, for every cooperation
$p^*:A\rightarrow A^{\otimes n}$ associated to $p\in P(n)$, the image $p^*(\alpha)$ actually lies in $ker(d^0-d^1)^{\otimes n}$.
We have
\[
ker(d^0-d^1)^{\otimes n} = \bigcap_i A\otimes ...\otimes ker(d^0-d^1)\otimes ...\otimes A.
\]
Let $p^*:A\rightarrow A^{\otimes n}$ be a cooperation associated to $p\in P(n)$.
Then the following equalities hold:
\begin{eqnarray*}
id\otimes...\otimes d^0\otimes...\otimes id \circ p^*(\alpha) & = & (s^0\circ d^0)\otimes...\otimes d^0\otimes...\otimes (s^0\circ d^0) \circ
p^*(\alpha) \\
 & = & s^0\otimes...\otimes id\otimes...\otimes s^0\circ (d^0)^{\otimes n}\circ p^*(\alpha) \\
 & = & s^0\otimes...\otimes id\otimes...\otimes s^0\circ p^*\circ d^0(\alpha) \\
 & = & s^0\otimes...\otimes id\otimes...\otimes s^0\circ p^*\circ d^1(\alpha) \\
 & = & id\otimes...\otimes d^1\otimes...\otimes id \circ p^*(\alpha).
\end{eqnarray*}
The first line holds because of the equality $s^0\circ d^0=id$ satisfied by hypothesis.
The third line comes from the fact that $d^0$ is a $P$-coalgebras morphism.
The fourth line follows from our assumption that $\alpha\in ker(d^0-d^1)$.
The last line is obtained by following the preceding arguments in the converse direction.
According to our decomposition of $ker(d^0-d^1)^{\otimes n}$, it precisely means that $p^*(\alpha)\in ker(d^0-d^1)^{\otimes n}$,
which concludes the proof.
\end{proof}

\begin{lem}
Let $\{R_i\}_{i\in I}$ be a set of $P$-coalgebras. Let us set
\[
d^0=P^*(\bigoplus \rho_{R_i}):P^*(\bigoplus R_i)\rightarrow P^*(\bigoplus P^*(R_i))
\]
and
\[
d^1=\pi\circ \Delta(\bigoplus R_i):P^*(\bigoplus R_i)\rightarrow P^*(\bigoplus P^*(R_i))
\]
where $\pi:P^*(P^*(\bigoplus R_i))\rightarrow P^*(\bigoplus P^*(R_i))$ is the canonical projection and $\Delta$
the comultiplication of the comonad $(P^*,\Delta,\epsilon)$.
Then $\times R_i=ker(d^0-d^1)$ is the product of the $R_i$ in $^P Ch_{\mathbb{K}}^+$.
\end{lem}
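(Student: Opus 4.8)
The plan is to recognize this construction as the dualization of the classical presentation of colimits of algebras over a monad: just as a coproduct of $P$-algebras arises as a reflexive coequalizer of free algebras, a product of $P$-coalgebras should arise as a coreflexive equalizer of cofree coalgebras. Concretely, I would first observe that $d^0$ and $d^1$ are genuine morphisms of $P$-coalgebras between the cofree coalgebras $P^*(\bigoplus R_i)$ and $P^*(\bigoplus P^*(R_i))$: the map $d^0=P^*(\bigoplus \rho_{R_i})$ is a coalgebra morphism because it is $P^*$ applied to a chain map, while $d^1=\pi\circ\Delta$ is the composite of the comonad comultiplication $\Delta$ (a coalgebra morphism by the comonad axioms) with $\pi$, itself $P^*$ applied to the canonical projection of $P^*(\bigoplus R_i)$ onto its ``diagonal'' summands $\bigoplus P^*(R_i)$. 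Then I would exhibit the common section $s^0=P^*(\bigoplus \epsilon_{R_i})$ and check that $s^0\circ d^0=s^0\circ d^1=\mathrm{id}$, using the counit identity $\epsilon_{R_i}\circ\rho_{R_i}=\mathrm{id}$ of each coalgebra $R_i$ for the first equality and the counit axioms of the comonad $P^*$ for the second. This makes $(d^0,d^1)$ a coreflexive pair, so the preceding lemma guarantees that $\ker(d^0-d^1)$ is their coreflexive equalizer in $^P Ch_{\mathbb{K}}^+$.

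It then remains to verify that this equalizer carries the universal property of the product. For an arbitrary $P$-coalgebra $X$, I would use the cofree adjunction $U\dashv P^*$ to identify a coalgebra morphism $g:X\rightarrow P^*(\bigoplus R_i)$ with the chain map $\bar g=\epsilon\circ g:X\rightarrow \bigoplus R_i$, so that $g=P^*(\bar g)\circ\rho_X$. The heart of the argument is to translate the equalizing condition $d^0\circ g=d^1\circ g$ into a condition on $\bar g$. Post-composing with the counit and invoking its naturality together with the comonad counit axioms, the two sides become, componentwise over $j$, the maps $\rho_{R_j}\circ\bar g_j$ and $P^*(\bar g_j)\circ\rho_X$, where $\bar g_j$ is the $j$-th component of $\bar g$. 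Hence $g$ equalizes $d^0$ and $d^1$ if and only if each component $\bar g_j:X\rightarrow R_j$ satisfies $\rho_{R_j}\circ\bar g_j=P^*(\bar g_j)\circ\rho_X$, that is, if and only if every $\bar g_j$ is a morphism of $P$-coalgebras. Since a morphism $X\rightarrow\ker(d^0-d^1)$ is exactly an equalizing morphism $g$, this produces a natural bijection between $\mathrm{Hom}_{^P Ch_{\mathbb{K}}^+}(X,\ker(d^0-d^1))$ and families of coalgebra morphisms $(X\rightarrow R_j)_j$, with projections induced by $\epsilon$ followed by the canonical projections of the sum. This is precisely the universal property of $\times R_i$.

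The main obstacle is this last step, namely the identification of the equalizing condition with the componentwise morphism condition. It relies on carefully bookkeeping the naturality of $\epsilon$ and on the explicit description of $\pi$ as $P^*$ of the diagonal projection, so that composing $\pi$ with the counit recovers exactly the projections onto the summands $P^*(R_j)$. A further delicate point, where the structure of the reduced category $Ch_{\mathbb{K}}^+$ is genuinely used, is to ensure that a family of coalgebra morphisms assembles into a single underlying map compatible with the sum, so that the bijection above is onto the full product of hom-sets; here the grading and conilpotence hypotheses govern the bookkeeping. By contrast, verifying that $d^0$ and $d^1$ are coalgebra morphisms and that $s^0$ is a common section is routine, amounting to unwinding the comonad and coalgebra axioms.
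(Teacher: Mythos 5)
Your proof follows essentially the same route as the paper's: exhibit the common section $s^0=P^*(\bigoplus\epsilon_{R_i})$, invoke Lemma 2.3 to identify $\ker(d^0-d^1)$ as the coreflexive equalizer in $^P Ch_{\mathbb{K}}^+$, and then verify the universal property of the product via the cofree adjunction by showing that a map into $P^*(\bigoplus R_i)$ factors through the equalizer exactly when its components are morphisms of $P$-coalgebras. You in fact supply more detail than the paper on that last equivalence (which the paper merely asserts after reducing to two factors), and you correctly flag the one genuinely delicate point for infinite index sets, namely that a family of component maps must assemble into a single map landing in the direct sum.
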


\begin{proof}
We prove the lemma in the case of two $P$-coalgebras $R$ and $S$. The proof is the same in the general case.
Let us set
\[
s^0=P^*(\epsilon(R)\oplus\epsilon(S)):P^*(P^*(R)\oplus P^*(S))\rightarrow P^*(R\oplus S),
\]
then $s^0\circ d^0=s^0\circ d^1=id$. According to Lemma 2.3, the space $ker(d^0-d^1)$ is the coreflexive
equalizer of $(d^0,d^1)$ in $^P Ch_{\mathbb{K}}^+$. Let $X$ be a $P$-coalgebra. Two linear maps
$u:X\rightarrow R$ and $v:X\rightarrow S$ induce a map $(u,v):X\rightarrow R\oplus S$.
This map admits a unique factorization through $P^*(R\oplus S)$ to give a $P$-coalgebras morphism
$\varphi_{(u,v)}:X\rightarrow P^*(R\oplus S)$ by the universal property of the cofree $P^*$-coalgebra.
This morphism admits a unique factorization through $ker(d^0-d^1)$ if and only if $u$ and $v$
are morphisms of $P$-coalgebras. By unicity of the limit this concludes our proof, since
$ker(d^0-d^1)$ satisfies the same universal property than $R\times S$.
\end{proof}

Now we introduce a crucial construction, the enveloping cooperad, which provides a handable expression
of the coproduct of a $P$-coalgebra with a cofree $P$-coalgebra. This cooperad plays a key role in the proof
of axiom M5 (i).

\subsection{Enveloping cooperad}

Let $A$ be a $P$-coalgebra. We want to construct a particular cooperad associated to $A$ and called the enveloping
cooperad of $A$. This is a "dual version" of the enveloping operad of \cite{Fre2}.
We need the following classical result :

\begin{prop}(see \cite{LV}, and Proposition 1.2.5 of \cite{Fre4})
When $\mathbb{K}$ is an infinite field, we have a fully faithful
embedding of the category of $\Sigma$-modules in the category of
endofunctors of $Vect_{\mathbb{K}}$, which associates to any $\Sigma$-module its Schur functor.
\end{prop}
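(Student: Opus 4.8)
The plan is to recover a $\Sigma$-module $M$ functorially from its Schur functor $S_M$, given on an object $V$ of $Vect_{\mathbb{K}}$ by $S_M(V)=\bigoplus_{n\geq 0}M(n)\otimes_{\Sigma_n}V^{\otimes n}$, and to use this reconstruction to invert the assignment $M\mapsto S_M$ on morphisms. Since this assignment is manifestly functorial, it remains to show that the map $\mathrm{Hom}_{\Sigma}(M,N)\to\mathrm{Nat}(S_M,S_N)$ sending a morphism $f$ to the induced natural transformation $S_f$ is a bijection.

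First I would extract $M(n)$ from $S_M$ by a multilinearity argument. Fixing $V=\mathbb{K}^n$ with basis $x_1,\dots,x_n$, the diagonal torus $(\mathbb{K}^{\times})^n$ acts on $V$ by rescaling the basis vectors, hence acts on $S_M(V)$. Because $\mathbb{K}$ is infinite, this action splits $S_M(V)$ into a direct sum of multihomogeneous weight components indexed by multidegrees in $\mathbb{N}^n$; this is exactly where the hypothesis on the field enters, since over a finite field scalar rescalings do not separate polynomial degrees. I would then isolate the component of multidegree $(1,\dots,1)$, i.e. the part multilinear in $x_1,\dots,x_n$. Inside $V^{\otimes n}$ this multilinear part is spanned by the $x_{\sigma(1)}\otimes\cdots\otimes x_{\sigma(n)}$ and is isomorphic to the regular representation $\mathbb{K}[\Sigma_n]$, so passing to coinvariants yields a natural isomorphism $M(n)\otimes_{\Sigma_n}\mathbb{K}[\Sigma_n]\cong M(n)$. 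Thus the multilinear component of $S_M(\mathbb{K}^n)$ reconstructs $M(n)$.

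Next I would treat faithfulness and fullness together. Given a natural transformation $\phi\colon S_M\to S_N$, naturality with respect to the diagonal scalar endomorphisms of $\mathbb{K}^n$ forces $\phi$ to preserve the weight decomposition, hence to restrict to a linear map on the multidegree $(1,\dots,1)$ components, i.e. a map $\widehat{\phi}(n)\colon M(n)\to N(n)$. Naturality with respect to the permutation action of $\Sigma_n\subset\mathrm{GL}_n(\mathbb{K})$ on $x_1,\dots,x_n$ makes $\widehat{\phi}(n)$ equivariant, so $\widehat{\phi}$ is a morphism of $\Sigma$-modules. For fullness I would then check $S_{\widehat{\phi}}=\phi$: every generator $m\otimes v_1\otimes\cdots\otimes v_n$ of $S_M(V)$ is the image of the universal multilinear element $m\otimes x_1\otimes\cdots\otimes x_n$ under $S_M(u)$, where $u\colon\mathbb{K}^n\to V$ is the linear map sending $x_i\mapsto v_i$; naturality of $\phi$ against $u$ then pins down $\phi$ on all of $S_M(V)$ in terms of $\widehat{\phi}$. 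Faithfulness is immediate, since applying the reconstruction to $\phi=S_f$ returns $f$.

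The main obstacle I anticipate is the bookkeeping in the last step: making precise that a natural transformation of Schur functors is completely determined by its effect on the multilinear components, by reducing an arbitrary element over an arbitrary $V$ to the universal multilinear element over $\mathbb{K}^n$ through naturality. Everything else — functoriality, the weight decomposition over an infinite field, and the identification $M(n)\otimes_{\Sigma_n}\mathbb{K}[\Sigma_n]\cong M(n)$ — is routine once this reduction is in place.
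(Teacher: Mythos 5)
The paper states this proposition without proof, citing \cite{LV} and Proposition 1.2.5 of \cite{Fre4}, and your argument is precisely the standard one given in those references: recover $M(n)$ as the multidegree $(1,\dots,1)$ weight space of $S_M(\mathbb{K}^n)$ under the torus action (the multilinear part being $M(n)\otimes_{\Sigma_n}\mathbb{K}[\Sigma_n]\cong M(n)$), use linear independence of distinct monomial characters over an infinite field to force any natural transformation to preserve this weight space, and propagate to all of $S_M(V)$ by naturality along the maps $u\colon\mathbb{K}^n\to V$, $x_i\mapsto v_i$. Your proposal is correct and matches the intended proof.
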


We consider the $\Sigma$-module $P^*[A]$ defined by
\[
P^*[A](n)=\bigoplus_{r=1}^{\infty}P(n+r)^*\otimes_{\Sigma_r} A^{\otimes r}.
\]
We need the following lemma:
\begin{lem}
Let $A$ be a $P$-coalgebra. For every chain complex $C$ of $Ch_{\mathbb{K}}^+$ we have $P^*[A](C)\cong P^*(A\oplus C)$.
\end{lem}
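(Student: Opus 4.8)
The plan is to compute both sides as Schur functors and match them term by term, the whole content being the \emph{exponential} (binomial) decomposition of a tensor power of a direct sum. First I would unfold the definitions: by definition of the Schur functor attached to the $\Sigma$-module $P^*[A]$,
\[
P^*[A](C)=\bigoplus_{n\geq 0}P^*[A](n)\otimes_{\Sigma_n}C^{\otimes n}=\bigoplus_{n\geq 0}\bigoplus_{r\geq 1}\left(P(n+r)^*\otimes_{\Sigma_r}A^{\otimes r}\right)\otimes_{\Sigma_n}C^{\otimes n},
\]
while on the other side
\[
P^*(A\oplus C)=\bigoplus_{m\geq 1}P(m)^*\otimes_{\Sigma_m}(A\oplus C)^{\otimes m}.
\]

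The key step is the $\Sigma_m$-equivariant decomposition of the tensor power. Expanding $(A\oplus C)^{\otimes m}$ into its $2^m$ elementary summands and grouping them according to the number $r$ of tensor factors equal to $A$ (and $n=m-r$ equal to $C$), one sees that $\Sigma_m$ permutes the summands of a fixed type $(r,n)$ transitively with stabilizer $\Sigma_r\times\Sigma_n$. This yields an isomorphism of $\Sigma_m$-modules
\[
(A\oplus C)^{\otimes m}\cong\bigoplus_{r+n=m}\mathbb{K}[\Sigma_m]\otimes_{\Sigma_r\times\Sigma_n}\left(A^{\otimes r}\otimes C^{\otimes n}\right).
\]
Substituting this into $P^*(A\oplus C)$ and using the elementary identity $P(m)^*\otimes_{\Sigma_m}\left(\mathbb{K}[\Sigma_m]\otimes_{\Sigma_r\times\Sigma_n}N\right)\cong P(m)^*\otimes_{\Sigma_r\times\Sigma_n}N$, which is just associativity of the tensor product together with $P(m)^*\otimes_{\Sigma_m}\mathbb{K}[\Sigma_m]\cong P(m)^*$, collapses the induction and gives
\[
P^*(A\oplus C)\cong\bigoplus_{r,n}P(n+r)^*\otimes_{\Sigma_r\times\Sigma_n}\left(A^{\otimes r}\otimes C^{\otimes n}\right).
\]

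Since the $\Sigma_r$-action on $A^{\otimes r}$ and the $\Sigma_n$-action on $C^{\otimes n}$ are carried by distinct tensor factors, I would then split the $\Sigma_r\times\Sigma_n$-coinvariants into iterated coinvariants, taking $\Sigma_r$ first and $\Sigma_n$ second, to recover $\bigoplus_{n}\bigoplus_{r}\left(P(n+r)^*\otimes_{\Sigma_r}A^{\otimes r}\right)\otimes_{\Sigma_n}C^{\otimes n}$, which is precisely the expansion of $P^*[A](C)$ obtained above. Finally I would note that every intermediate isomorphism is natural in $C$, so that one genuinely gets an isomorphism of functors, as needed for the enveloping cooperad construction.

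I expect the main point to check carefully, rather than a true obstacle, to be the bookkeeping of the $\Sigma$-equivariance: one must verify that the transitive-action argument produces exactly the stabilizer $\Sigma_r\times\Sigma_n$ and that the collapse of the induced module is compatible with the dual $\Sigma_m$-action on $P(m)^*$. The finiteness and characteristic-zero hypotheses on $P$ ensure that the coinvariants behave well and that the direct sums are degreewise finite (by the conilpotence already noted for objects of $Ch_{\mathbb{K}}^+$), so that no convergence issue arises and the reindexing is legitimate.
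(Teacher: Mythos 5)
Your proposal is correct and follows essentially the same route as the paper: both sides are expanded as Schur functors and matched via the $\Sigma_m$-equivariant decomposition $P(m)^*\otimes_{\Sigma_m}(A\oplus C)^{\otimes m}\cong\bigoplus_{r+n=m}P(n+r)^*\otimes_{\Sigma_r\times\Sigma_n}(A^{\otimes r}\otimes C^{\otimes n})$, followed by splitting the coinvariants iteratively. The only difference is that the paper simply asserts this key equality, whereas you justify it by the induced-module/stabilizer argument, which is a welcome addition rather than a divergence.
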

\begin{proof}
We use the equality
\[
P(n)^*\otimes_{\Sigma_n} (A\oplus C)^{\otimes n} = \bigoplus_{p+q=n} P(p+q)^*\otimes_{\Sigma_p \times \Sigma_q} (A^{\otimes p} \otimes C^{\otimes q})
\]
to get
\begin{eqnarray*}
P^*(A\oplus C) & = & \bigoplus_{n\geq 0} P(n)^*\otimes_{\Sigma} (A\oplus C)^{\otimes n} \\
 & = & \bigoplus_n \bigoplus_{p+q=n} P(p+q)^*\otimes_{\Sigma_p \times \Sigma_q} (A^{\otimes p} \otimes C^{\otimes q}) \\
 & = & \bigoplus_n \bigoplus_{p+q=n} (P(p+q)^*\otimes_{\Sigma_p} A^{\otimes p}) \otimes_{\Sigma_q} C^{\otimes q}) \\
 & = & \bigoplus_q P^*[A](q)\otimes_{\Sigma_q} C^{\otimes q} \\
 & = & P^*[A](C).
\end{eqnarray*}
\end{proof}
This lemma allows us to equip $P^*[A]$ with a cooperad structure induced by the one of $P^*$.
The cooperad coproduct $P^*[A]\rightarrow P^*[A]\circ P^*[A]$ comes from a comonad coproduct
$P^*[A](-)\rightarrow P^*[A](-)\circ P^*[A](-)$ on the Schur functor $P^*[A](-)$ associated to $P^*[A]$.
This comonad coproduct is a natural map defined for every chain complex $C$ by
\begin{eqnarray*}
P^*[A](C)\cong P^*(A\oplus C) & \stackrel{\Delta (A\oplus C)}{\rightarrow} & P^*(P^*(A\oplus C)) \\
 & \stackrel{P^*(proj\circ\epsilon,id)}{\rightarrow} & P^*(A\oplus P^*(A\oplus C))\cong (P^*[A]\circ P^*[A])(C)
\end{eqnarray*}
where $\Delta$ is the coproduct of $P^*$, $\epsilon$ its counit and $proj$ the projection on the first component.

The $P$-coalgebra structure morphism $\rho_A:A\rightarrow P^*(A)$ of $A$ induces a $\Sigma$-modules morphism
\[
d^0:P^*[A]\rightarrow P^*[P^*(A)],
\]
given by
\[
d^0(n)=\bigoplus_{r=1}^{\infty}id\otimes \rho_A^{\otimes r}:\bigoplus_{r=1}^{\infty}P(n+r)^*\otimes_{\Sigma_r} A^{\otimes r}
\rightarrow \bigoplus_{r=1}^{\infty}P(n+r)^*\otimes_{\Sigma_r} P^*(A)^{\otimes r}.
\]
It comes from a natural map $d^0(-)$ defined for every chain complex $C$ by
\begin{eqnarray*}
d^0(C): P^*[A](C)\cong P^*(A\oplus C) & \stackrel{P^*(\rho_A\oplus id)}{\rightarrow} & P^*(P^*(A)\oplus C)\cong P^*[P^*[A]](C)
\end{eqnarray*}
where $\rho_A:A\rightarrow P^*(A)$ is the map defining the $P^*$-coalgebra structure of $A$.
The natural map $d^0(-)$ is by construction a morphism of comonads, so $d^0$ forms a morphism of cooperads.

The coproduct $\Delta:P^*\rightarrow P^*\circ P^*$ associated to the comonad $(P^*,\Delta,\epsilon)$
induces another morphism of $\Sigma$-modules
\[
d^1:P^*[A]\rightarrow P^*[P^*(A)]
\]
defined in the following way: we have a natural map $d^1(-)$ defined for every chain complex $C$ by
\begin{eqnarray*}
d^1(C): P^*[A](C)\cong P^*(A\oplus C) & \stackrel{\Delta (A\oplus C)}{\rightarrow} & P^*(P^*(A\oplus C)) \\
 & \stackrel{P^*(P^*(pr_A),\pi\circ P^*(pr_C))}{\rightarrow} & P^*(P^*(A)\oplus C) \\
 & \cong & P^*[P^*(A)](C)
\end{eqnarray*}
where $P^*[A](-)$ is the Schur functor associated to $P^*[A]$ and the map $\pi$ is the projection on the component of arity $1$.
According to Proposition 2.5, any natural transformation between two Schur functors determines a unique morphism between their
corresponding $\Sigma$-modules.
Thus there is a unique morphism of $\Sigma$-modules $d^1:P^*[A]\rightarrow P^*[P^*(A)]$ associated to $d^1(-)$.
The natural map $d^1(-)$ is by construction a morphism of comonads, so $d^1$ forms a morphism of cooperads.

The counit $\epsilon:P^*\rightarrow Id$ induces a morphism of $\Sigma$-modules
\[
s^0:P^*[P^*(A)]\rightarrow P^*[A]
\]
defined in the following way: we have a natural map $s^0(-)$ defined for every chain complex $C$ by
\begin{eqnarray*}
s^0(C): P^*[P^*(A)](C)\cong P^*(P^*(A)\oplus C) & \stackrel{P^*(P^*(i_A),i_C)}{\rightarrow} & P^*(P^*(A\oplus C)) \\
 & \stackrel{P^*(\epsilon (A\oplus C))}{\rightarrow} & P^*(A\oplus C)\cong P^*[A](C)
\end{eqnarray*}
where $i_A:A\rightarrow A\oplus C$ and $i_C:C\rightarrow A\oplus C$, hence the unique associated morphism of $\Sigma$-modules
$s^0:P^*[P^*(A)]\rightarrow P^*[A]$. Note that a priori $s^0$ is not a morphism of cooperads.
We finally obtain a coreflexive pair $(d^0,d^1)$ of morphisms of $\Sigma$-modules
induced by the associated reflexive pair of morphisms of Schur functors. The enveloping cooperad of $A$ is the
coreflexive equalizer
\[
\xymatrix{ U_{P^*}(A)=ker(d^0-d^1)\ar[r] & P^*[A] \ar@<1ex>[r]^{d^0} \ar@<-1ex>[r]_{d^1} & P^*[P^*(A)] \ar@/_1.3pc/[l]_{s^0} }
\]
in $\Sigma$-modules, where $d^0$ and $d^1$ are cooperad morphisms.
The fact that such a coreflexive equalizer inherits a cooperad structure from the one of $P^*[A]$
follows from arguments similar to those of the proof of Lemma 2.3.
The cooperad $U_{P^*}(A)$ is coaugmented over $P$ (by dualizing the proof of Lemma 1.2.4 in \cite{Fre2}).
One can even go further and prove that the category of $U_{P^*}(A)$-coalgebras is equivalent to the category
of $P$-coalgebras over $A$ (this is the dual statement of Theorem 1.2.5 in \cite{Fre2}).

Now we want to prove that for every $P$-coalgebra $A$ and every chain complex $C$, we have an isomorphism
of $P$-coalgebras $U_{P^*}(A)(C)\cong A\times P^*(C)$ where $\times$ is the product in $^P Ch_{\mathbb{K}}^+$. For this aim
we need the following lemma:
\begin{lem}
Let $A$ be a $P$-coalgebra and $C$ be a chain complex. The following equalizer defines the product
$A\times P^*(C)$ in the category of $P$-coalgebras:
\[
\xymatrix{ A\times P^*(C)=ker(d^0-d^1)\ar[r] & P^*(A\oplus C) \ar@<1ex>[r]^{d^0} \ar@<-1ex>[r]_{d^1} & P^*(P^*(A)\oplus C) \ar@/_1.3pc/[l]_{s^0} }
\]
where $d^0\mid_A=\rho_A$, $d^0\mid_C=id_C$, $d^1\mid_A=\Delta(A)$, $d_l\mid_C=id_C$, $s^0\mid_A=\epsilon(A)$, $s^0\mid_C=id_C$.
\end{lem}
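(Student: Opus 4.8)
The plan is to establish the statement in two movements, closely following the pattern of the proof of Lemma 2.4: first to show that $(d^0,d^1)$ is a coreflexive pair, so that Lemma 2.3 identifies $ker(d^0-d^1)$ with the coreflexive equalizer in ${}^P Ch_{\mathbb{K}}^+$, and then to verify that this equalizer enjoys the universal property characterizing $A\times P^*(C)$. The point that makes everything computable is that $d^0$, $d^1$ and $s^0$ are exactly the evaluations at the chain complex $C$ of the cooperad maps $d^0(-)$, $d^1(-)$ and $s^0(-)$ constructed in Section 2.2; concretely $d^0=P^*(\rho_A\oplus id_C)$, $d^1=P^*(P^*(pr_A),\pi\circ P^*(pr_C))\circ\Delta(A\oplus C)$ and $s^0=P^*(\epsilon(A\oplus C))\circ P^*(P^*(i_A),i_C)$, so that all manipulations reduce to the comonad axioms of $(P^*,\Delta,\epsilon)$.

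For the first movement I would check the section identities $s^0\circ d^0=s^0\circ d^1=id$. Using naturality of $\epsilon$ together with $\epsilon(A)\circ\rho_A=id_A$ (the counit axiom of the coalgebra $A$), the map $s^0$ simplifies to $P^*(\epsilon(A)\oplus id_C)$, whence $s^0\circ d^0=P^*((\epsilon(A)\circ\rho_A)\oplus id_C)=id$. For the second identity I would first simplify $s^0\circ P^*(P^*(pr_A),\pi\circ P^*(pr_C))$ to $P^*(\epsilon(A\oplus C))$ — here $\pi=\epsilon(C)$ is the arity-one projection — and then conclude $s^0\circ d^1=P^*(\epsilon(A\oplus C))\circ\Delta(A\oplus C)=id$ by the comonad counit law. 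Lemma 2.3 then yields that $ker(d^0-d^1)$ is the coreflexive equalizer of $(d^0,d^1)$ in ${}^P Ch_{\mathbb{K}}^+$.

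The second movement is the heart of the argument. I would test $ker(d^0-d^1)$ against an arbitrary $P$-coalgebra $X$: by the equalizer property, a morphism $X\to ker(d^0-d^1)$ is a $P$-coalgebra morphism $g:X\to P^*(A\oplus C)$ with $d^0\circ g=d^1\circ g$, and by the universal property of the cofree coalgebra (Theorem 1.5) such a $g$ corresponds to a pair of chain maps $u:U(X)\to A$ and $v:U(X)\to C$. Comparing the two coalgebra morphisms $d^0\circ g$ and $d^1\circ g$ into $P^*(P^*(A)\oplus C)$ amounts, again by cofree-ness, to comparing their arity-one projections in $P^*(A)\oplus C$; using naturality of $\epsilon$ and the counit law to collapse the $\Delta(A\oplus C)$ appearing in $d^1$, I would obtain that the $C$-components agree tautologically (both equal $v$, since $d^0\mid_C=d^1\mid_C=id_C$) while the $A$-components give the equation $\rho_A\circ u=P^*(u)\circ\rho_X$. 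This last equation is precisely the condition that $u$ underlies a morphism of $P$-coalgebras $X\to A$, whereas $v$ stays unconstrained and corresponds by adjunction to an arbitrary morphism $X\to P^*(C)$. This produces a bijection
\[
Mor_{{}^P Ch_{\mathbb{K}}^+}(X,ker(d^0-d^1))\cong Mor_{{}^P Ch_{\mathbb{K}}^+}(X,A)\times Mor_{{}^P Ch_{\mathbb{K}}^+}(X,P^*(C)),
\]
natural in $X$, which is the universal property of the product; by uniqueness of limits $ker(d^0-d^1)\cong A\times P^*(C)$.

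The hard part is this final equivalence. The delicate point is that $d^1$ is not of the purely functorial form $P^*(f)$ but carries the comultiplication $\Delta(A\oplus C)$, so one cannot read off the equalizing condition directly; the computation succeeds only because, after projecting to arity one, the counit law $\epsilon\circ\Delta=id$ removes this comultiplication and turns the $A$-component of $d^1\circ g$ into $P^*(u)\circ\rho_X$, to be matched against $\rho_A\circ u$. Keeping track of the two distinct cofree adjunctions in play — one into $P^*(A\oplus C)$ and one into $P^*(P^*(A)\oplus C)$ — and of the naturality of $\epsilon$ is what makes the reduction to the single coalgebra-morphism condition on $u$ rigorous.
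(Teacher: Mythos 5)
Your proof is correct and follows essentially the same route as the paper's: verify the section identities $s^0\circ d^0=s^0\circ d^1=id$ so that Lemma 2.3 identifies $ker(d^0-d^1)$ as the coreflexive equalizer in ${}^P Ch_{\mathbb{K}}^+$, then use the cofree adjunction to match morphisms into the kernel with pairs consisting of a $P$-coalgebra morphism $u:X\to A$ and a chain map $v:U(X)\to C$. The paper leaves both the section identities and the final factorization claim as assertions, whereas you carry out the arity-one projection computation showing that the equalizing condition is exactly $\rho_A\circ u=P^*(u)\circ\rho_X$; this is a welcome filling-in of detail, not a different argument.
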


\begin{proof}
We clearly have $s^0\circ d^0=s^0\circ d^1=id$ so $(d^0,d^1)$ is a reflexive pair in $^P Ch_{\mathbb{K}}^+$.
The space $ker(d^0-d^1)$ is the coreflexive equalizer of $(d^0,d^1)$ in $Ch_{\mathbb{K}}^+$ and is a
sub-$P$-coalgebra of $P^*(A\oplus C)$, so it is the coreflexive equalizer of $(d^0,d^1)$ in $^P Ch_{\mathbb{K}}^+$.
Let $X$ be a $P$-coalgebra, $u:X\rightarrow A$ a morphism of $P$-coalgebras and $v:X\rightarrow C$ a linear map.
They induce a map $(u,v):X\rightarrow A\oplus C$, hence a morphism of $P$-coalgebras $\varphi_{(u,v)}:X\rightarrow P^*(A\oplus C)$
obtained by the universal property of the cofree $P$-coalgebra. The proof ends by seeing that $\varphi_{(u,v)}$
admits a unique factorization through $ker(d^0-d^1)$.
\end{proof}

The coreflexive equalizer in $\Sigma$-modules defining the enveloping cooperad induces a coreflexive equalizer
in $P$-coalgebras
\[
\xymatrix{ U_{P^*}(A)(C) \ar[r] & P^*[A](C) \ar@<1ex>[r]^{d^0} \ar@<-1ex>[r]_{d^1} & P^*[P^*(A)](C) \ar@/_1.3pc/[l]_{s^0} }
\]
where $P^*[A](C)\cong P^*(A\oplus C)$, $P^*[P^*(A)](C)\cong P^*(P^*(A)\oplus C)$ and $d^0,d^1,s^0$ turn out to be
the morphisms of the lemma above. By unicity of the limit, we have proved the following result:
\begin{prop}
Let $A$ be a $P$-coalgebra and $C$ be a chain complex, then there is an isomorphism of $P$-coalgebras
\[
U_{P^*}(A)(C)\cong A\times P^*(C).
\]
\end{prop}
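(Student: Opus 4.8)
The plan is to evaluate the Schur functors attached to the defining coreflexive equalizer of $U_{P^*}(A)$ at the fixed chain complex $C$, and to recognize the resulting diagram of $P$-coalgebras as exactly the equalizer computing the product $A\times P^*(C)$ in Lemma 2.7. Once the two diagrams are shown to be the same coreflexive pair, the isomorphism of $P$-coalgebras follows from the uniqueness of limits.

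First I would check that evaluation at a fixed $C$ carries the equalizer in $\Sigma$-modules to an equalizer of chain complexes. The assignment $M\mapsto M(C)=\bigoplus_{n}M(n)\otimes_{\Sigma_n}C^{\otimes n}$ is built from tensoring with $C^{\otimes n}$, taking $\Sigma_n$-coinvariants, and forming direct sums; over a field of characteristic zero each of these is exact (coinvariants split off as a direct summand via averaging), so $M\mapsto M(C)$ preserves kernels. Applied arity-wise to $U_{P^*}(A)=ker(d^0-d^1)$ this gives $U_{P^*}(A)(C)=ker(d^0(C)-d^1(C))$, and the section $s^0(C)$ still satisfies $s^0(C)\circ d^0(C)=s^0(C)\circ d^1(C)=id$, so the evaluated diagram is again a coreflexive equalizer. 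Since $P^*[A](C)\cong P^*(A\oplus C)$ is the cofree $P$-coalgebra on $A\oplus C$, this kernel is a sub-$P$-coalgebra exactly as in Lemma 2.3, so the whole diagram lives in $^P Ch_{\mathbb{K}}^+$.

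Next I would invoke Lemma 2.6 to rewrite the two terms as $P^*(A\oplus C)$ and $P^*(P^*(A)\oplus C)$, and then verify that under these identifications the maps $d^0(C)$, $d^1(C)$, $s^0(C)$ coincide with the maps $d^0$, $d^1$, $s^0$ of Lemma 2.7. For $d^0(C)$ the defining composite is $P^*(\rho_A\oplus id)$, which restricts to $\rho_A$ on $A$ and to $id$ on $C$; for $s^0(C)$ the composite built from $P^*(P^*(i_A),i_C)$ and the counit restricts to $\epsilon(A)$ on $A$ and to $id$ on $C$. Both identifications are immediate. The step requiring genuine care, and the \emph{main obstacle}, is $d^1(C)$: one must unwind the comonad coproduct $\Delta(A\oplus C)$ followed by the projection $P^*(P^*(pr_A),\pi\circ P^*(pr_C))$ and check that it restricts to the comultiplication $\Delta(A)$ on the $A$-component and to $id$ on the $C$-component, as prescribed in Lemma 2.7. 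This is where the precise bookkeeping of the cofree coalgebra structure and of the arity-one projection $\pi$ has to be done.

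With all three structure maps matched, $U_{P^*}(A)(C)$ and $A\times P^*(C)$ are the coreflexive equalizer of one and the same coreflexive pair in $^P Ch_{\mathbb{K}}^+$, so by uniqueness of the limit they are canonically isomorphic as $P$-coalgebras, which is the asserted isomorphism $U_{P^*}(A)(C)\cong A\times P^*(C)$.
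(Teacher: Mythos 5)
Your proposal follows essentially the same route as the paper: evaluate the defining coreflexive equalizer of $U_{P^*}(A)$ at $C$, identify the two outer terms with $P^*(A\oplus C)$ and $P^*(P^*(A)\oplus C)$ via Lemma 2.6, match the maps $d^0$, $d^1$, $s^0$ with those of Lemma 2.7, and conclude by uniqueness of the limit. The paper is in fact terser than you are --- it simply asserts that the evaluated diagram is a coreflexive equalizer of $P$-coalgebras and that the maps ``turn out to be'' those of Lemma 2.7 --- so your added justifications (exactness of coinvariants in characteristic zero, the bookkeeping for $d^1(C)$) only make the same argument more complete.
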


We also need the following general result about $\Sigma$-modules:

\begin{prop}
Let $M$ be a $\Sigma$-module and $C$ a chain complex. If $H_*(C)=0$ then $H_*(M(C))=H_*(M(0))$.
\end{prop}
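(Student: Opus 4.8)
The plan is to decompose the Schur functor arity by arity and to isolate the contribution of arity zero, which produces $M(0)$, from the higher arities, which turn out to be acyclic. Writing the Schur functor as
\[
M(C)=\bigoplus_{n\geq 0} M(n)\otimes_{\Sigma_n} C^{\otimes n},
\]
I would first use that homology commutes with arbitrary direct sums of chain complexes, reducing the problem to the computation of $H_*\big(M(n)\otimes_{\Sigma_n} C^{\otimes n}\big)$ for each fixed $n$. The arity-zero summand contributes $M(0)\otimes_{\Sigma_0}\mathbb{K}=M(0)$, concentrated in degree $0$; this is exactly the value of the Schur functor at the zero complex, so it accounts for $H_*(M(0))$.

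The core of the argument concerns the terms of arity $n\geq 1$. First I would invoke the Künneth formula, valid since we work over the field $\mathbb{K}$, to compute $H_*(C^{\otimes n})\cong H_*(C)^{\otimes n}$; the hypothesis $H_*(C)=0$ then forces $C^{\otimes n}$ to be acyclic for every $n\geq 1$. It remains to transfer this acyclicity through the coinvariants functor $M(n)\otimes_{\Sigma_n}(-)$.

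The main (and essentially only) obstacle is to justify that homology commutes with the symmetric-group coinvariants, i.e. that
\[
H_*\big(M(n)\otimes_{\Sigma_n} C^{\otimes n}\big)\cong M(n)\otimes_{\Sigma_n} H_*\big(C^{\otimes n}\big).
\]
This is precisely where the characteristic-zero hypothesis enters in an essential way. By Maschke's theorem the group algebra $\mathbb{K}[\Sigma_n]$ is semisimple, so every $\mathbb{K}[\Sigma_n]$-module is flat; in particular the functor $M(n)\otimes_{\Sigma_n}(-)$ is exact and therefore commutes with the homology of $C^{\otimes n}$. Concretely, the averaging idempotent $\frac{1}{n!}\sum_{\sigma\in\Sigma_n}\sigma$ realizes the coinvariants as a natural direct summand, and passing to a direct summand is an exact operation. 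Combining this with the acyclicity of $C^{\otimes n}$ yields $H_*\big(M(n)\otimes_{\Sigma_n} C^{\otimes n}\big)=0$ for all $n\geq 1$. Summing over $n$ then leaves only the arity-zero contribution, giving $H_*(M(C))\cong M(0)=H_*(M(0))$, as claimed.
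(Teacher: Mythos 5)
Your proof is correct and follows essentially the same route as the paper: decompose the Schur functor arity by arity, kill the terms of arity $n\geq 1$ via the K\"unneth formula, and handle the $\Sigma_n$-coinvariants in characteristic zero by the averaging idempotent, which is exactly the paper's norm-map retraction argument phrased via Maschke's theorem.
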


\begin{proof}
Recall that we work over a field $\mathbb{K}$ of characteristic $0$.
We use the norm map $N:M(n)\otimes_{\Sigma_n} C^{\otimes n} \rightarrow M(n)\otimes C^{\otimes n}$ defined by
\[
N(c\otimes v_1\otimes...\otimes v_n)=\frac{1}{n!} \sum_{\sigma\in\Sigma_n} \sigma.c\otimes v_{\sigma(1)}\otimes...\otimes v_{\sigma(n)}.
\]
If we denote $p:M(n)\otimes C^{\otimes n}\rightarrow M(n)\otimes_{\Sigma_n} C^{\otimes n}$ the projection, then
\begin{eqnarray*}
(p\circ N)(c\otimes v_1\otimes...\otimes v_n) & = & \frac{1}{n!} \sum_{\sigma\in\Sigma_n} p(\sigma.c\otimes v_{\sigma(1)}\otimes...\otimes v_{\sigma(n)}) \\
 & = & \frac{1}{n!}\mid\Sigma_n\mid c\otimes v_1\otimes...\otimes v_n \\
 & = & c\otimes v_1\otimes...\otimes v_n
\end{eqnarray*}
so $p\circ N=id$. Therefore $M(n)\otimes_{\Sigma_n} C^{\otimes n}$ is a retract of $M(n)\otimes C^{\otimes n}$.
For $n\geq 1$, the Künneth formula gives us for every $k\geq 0$
\[
H_k(M(n)\otimes C^{\otimes n})=\bigoplus_{p+q=k} H_p(M(n)\otimes C)\otimes H_q(C^{\otimes n-1}).
\]
This is equal to $0$ for $n>1$ because the fact that $H_*(C)=0$ implies recursively that $H_*(C^{\otimes n})=0$
by the Künneth formula. This is also equal to $0$ for $n=1$ because the fact that $H_k(C)=0$ implies that
$H_k(M(1)\otimes C)=0$.
For $n=0$, we have $H_k(M(0))$. We conclude that $H_k(M(C))=H_k(M(0))$.
\end{proof}

We finally reach the crucial result of this section:
\begin{cor}
Let $A$ be a $P$-coalgebra and $C$ be a chain complex. If $H_*(C)=0$ then the canonical projection
$A\times P^*(C)\rightarrow A$ is a weak equivalence in $^P Ch_{\mathbb{K}}^+$.
\end{cor}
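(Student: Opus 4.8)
The plan is to combine Proposition 2.8 and Proposition 2.9, the two results just established, and to track carefully how the canonical projection sits inside these identifications. First, by Proposition 2.8 the source is identified with the evaluation of the Schur functor of the enveloping cooperad, $A\times P^*(C)\cong U_{P^*}(A)(C)$, where $U_{P^*}(A)$ is a $\Sigma$-module. Since $P(0)=0$, the defining formula $P^*(V)=\bigoplus_{r\geq 1}P(r)^*\otimes_{\Sigma_r}V^{\otimes r}$ gives $P^*(0)=0$; the zero object is terminal in $^P Ch_{\mathbb{K}}^+$, so $A\times P^*(0)=A\times 0\cong A$, and hence $U_{P^*}(A)(0)\cong A$ by Proposition 2.8 applied with $C=0$. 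The key structural remark I would make is that the isomorphism of Proposition 2.8 is natural in the chain complex argument, so that the canonical projection $A\times P^*(C)\to A$ corresponds to the map $U_{P^*}(A)(C)\to U_{P^*}(A)(0)$ obtained by applying the Schur functor $U_{P^*}(A)(-)$ to the (necessarily zero) map $C\to 0$.

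Next I would apply Proposition 2.9 to the $\Sigma$-module $M=U_{P^*}(A)$. As $H_*(C)=0$, it yields $H_*(U_{P^*}(A)(C))=H_*(U_{P^*}(A)(0))$. The point is not merely this abstract equality but the fact that the very map inducing it is the one we care about: inspecting the proof of Proposition 2.9, each arity-$n$ summand $M(n)\otimes_{\Sigma_n}C^{\otimes n}$ with $n\geq 1$ is acyclic, being a retract (via the norm map) of $M(n)\otimes C^{\otimes n}$, whose homology vanishes by the K\"unneth formula because $H_*(C)=0$. The map $M(C\to 0)$ is the identity on the arity-zero summand $U_{P^*}(A)(0)$ and is zero on the positive-arity summands (since $C^{\otimes n}\to 0^{\otimes n}=0$ for $n\geq 1$). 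Thus $M(C\to 0)$ is precisely the projection onto the arity-zero part, and the acyclicity of the positive-arity parts makes it a quasi-isomorphism.

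Combining these two observations, the canonical projection $A\times P^*(C)\to A$ induces an isomorphism in homology, hence is a weak equivalence in $Ch_{\mathbb{K}}^+$, and therefore a weak equivalence in $^P Ch_{\mathbb{K}}^+$ by the definition of weak equivalences through the forgetful functor $U$.

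The step I expect to be the main obstacle is the bookkeeping in the first paragraph: one must verify that the natural isomorphism of Proposition 2.8 is genuinely compatible with the maps induced by $C\to 0$ on both sides, so that the product projection is identified with $U_{P^*}(A)(C\to 0)$ and, under the Schur functor, with the arity-zero projection. Once this identification is secured, the acyclicity of the positive-arity summands supplied by Proposition 2.9 finishes the argument immediately. I would therefore spend the bulk of the write-up making the naturality explicit (for instance by noting that the projection admits the evident section $(\mathrm{id}_A,0)$ and that both maps are recovered from the universal property of the product applied functorially to $C\to 0$), and keep the homological computation itself short, since it is entirely contained in Proposition 2.9.
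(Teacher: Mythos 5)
Your proof is correct and follows essentially the same route as the paper: identify $A\times P^*(C)$ with $U_{P^*}(A)(C)$ via Proposition 2.8, apply Proposition 2.9 to the $\Sigma$-module $U_{P^*}(A)$, and identify $U_{P^*}(A)(0)\cong A\times P^*(0)\cong A$. You are in fact slightly more careful than the paper, which only records the abstract equality of homologies $H_*(U_{P^*}(A)(C))=H_*(U_{P^*}(A)(0))$ without checking that the canonical projection is the map realizing it; your naturality argument identifying the projection with $U_{P^*}(A)(C\to 0)$, i.e.\ with the retraction onto the arity-zero summand whose complement is acyclic, supplies exactly that implicit detail.
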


\begin{proof}
According to Proposition 2.8, we have $U_{P^*}(A)(C)\cong A\times P^*(C)$. We can apply Proposition 2.9
to the $\Sigma$-module $U_{P^*}(A)$ since $H_*(C)=0$ by hypothesis, so
\[
H_*(A\times P^*(C))=H_*(U_{P^*}(A)(C))=H_*(U_{P^*}(A)(0)).
\]
It remains to prove that $H_*(U_{P^*}(A)(0))=H_*(A)$. For this aim we show that $U_{P^*}(A)(0)\cong A$.
It comes from a categorical result: in any category with a final object and admitting products,
the product of any object $A$ with the final object is isomorphic to $A$. We apply this fact to
$U_{P^*}(A)(0)\cong A\times P^*(0)$. Indeed, the chain complex $0$ is final in $Ch_{\mathbb{K}}^+$
so $P^*(0)$ is final in $^P Ch_{\mathbb{K}}^+$.
\end{proof}

\subsection{Generating (acyclic) cofibrations, proofs of M4 and M5}

Before specifying the families of generating cofibrations and generating acyclic cofibrations,
we prove axioms M4 (i) and M5 (i). The cofibrantly generated structure will then be used
to prove M5 (ii) by means of a small object argument, slightly different from the preceding one
since we will use smallness with respect to injections systems.
The plan and some arguments parallel those of \cite{GG}. However, they work in cocommutative
differential graded coalgebras. Some care is necessary in our more general setting. This is the
reason why we give full details in proofs, in order to see where we can readily follow
\cite{GG} and where our modifications (for instance the notion of enveloping cooperad) step in.

\textbf{M5 (i).} We first need a preliminary lemma:
\begin{lem}
Every chain complex  $X$ of $Ch_{\mathbb{K}}^+$ can be embedded in a chain complex $V$ satisfying $H_*(V)=0$.
\end{lem}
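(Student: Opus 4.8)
The plan is to realise the embedding concretely by means of the mapping cone of the identity of $X$, which is the standard device for engulfing a complex into an acyclic one. Set $V=\mathrm{Cone}(\mathrm{id}_X)$, so that $V_n=X_{n-1}\oplus X_n$ with differential $d(x,y)=(-d_X x,\,x+d_X y)$ for $x\in X_{n-1}$ and $y\in X_n$. First I would record that $V$ genuinely lives in $Ch_{\mathbb{K}}^+$: since $X_0=0$ (and $X_{-1}=0$), we get $V_0=X_{-1}\oplus X_0=0$, so $V$ is again a positively graded complex with trivial degree-zero part, and $d^2=0$ is the usual routine check.

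Next I would exhibit the embedding. The map $\iota\colon X\to V$ given by $y\mapsto(0,y)$ commutes with the differentials, since $d(0,y)=(0,d_X y)=\iota(d_X y)$, and it is injective in every degree. A degreewise injection is exactly a monomorphism in $Ch_{\mathbb{K}}^+$ (indeed it is a cofibration for the recalled model structure on chain complexes), so $\iota$ is the sought embedding of $X$ into $V$.

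It then remains to prove that $H_*(V)=0$, and here I would bypass the long exact sequence of the cone in favour of an explicit contracting homotopy. Define $h\colon V_n\to V_{n+1}$ by $h(x,y)=(y,0)$. A direct computation gives $dh(x,y)=d(y,0)=(-d_X y,\,y)$ and $hd(x,y)=h(-d_X x,\,x+d_X y)=(x+d_X y,\,0)$, whence $(dh+hd)(x,y)=(x,y)$, that is $dh+hd=\mathrm{id}_V$. Thus $\mathrm{id}_V$ is null-homotopic, $V$ is contractible, and in particular $H_*(V)=0$.

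There is no serious obstacle in this argument; the only points demanding attention are the sign convention in the differential of the cone, chosen precisely so that $\iota$ is a chain map and $h$ is a genuine contraction, and the verification that the construction does not leave $Ch_{\mathbb{K}}^+$, both of which are settled above. If one prefers to avoid signs altogether, an equivalent route is to use the short exact sequence $0\to X\to V\to X[-1]\to 0$ and the fact that its connecting homomorphism is induced by $\mathrm{id}_X$, forcing $H_*(V)=0$; but the homotopy $h$ makes the acyclicity manifest in one line.
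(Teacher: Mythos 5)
Your proof is correct: the mapping cone of the identity, together with the explicit contracting homotopy $h(x,y)=(y,0)$, is exactly the standard argument, and your checks (including that $V_0=X_{-1}\oplus X_0=0$ so that $V$ stays in $Ch_{\mathbb{K}}^+$) are all valid. The paper in fact states this lemma without proof, so there is nothing to compare against; your write-up supplies the expected justification.
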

This lemma helps us to prove the following result:
\begin{prop}
(i) Let $C$ be a $P$-coalgebra and $V$ be a chain complex such that $H_*(V)=0$. Then
the projection $C\times P^*(V)\rightarrow C$ is an acyclic fibration with the right lifting property
with respect to all cofibrations.

(ii) Every $P$-coalgebras morphism $f:D\rightarrow C$ admits a factorization
\[
D\stackrel{j}{\rightarrow} X \stackrel{q}{\rightarrow} C
\]
where $j$ is a cofibration and $q$ an acyclic fibration with the right lifting property
with respect to all cofibrations (in particular we obtain axiom M5 (i)).
\end{prop}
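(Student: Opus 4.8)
My plan is to prove part (i) first, then use it to build the factorization in part (ii).

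For part (i), I would start from Corollary 2.11, which already tells me that when $H_*(V)=0$ the projection $\pi:C\times P^*(V)\rightarrow C$ is a weak equivalence. So the only thing left to establish is that $\pi$ is a fibration, and indeed a fibration with the strong property of having the right lifting property (RLP) with respect to \emph{all} cofibrations, not merely the acyclic ones. By the definition of fibrations in Theorem 2.1(iii), it suffices to exhibit the RLP against all cofibrations, since this is stronger than RLP against acyclic cofibrations. So the heart of (i) is a lifting argument. Given a cofibration $i:K\rightarrow L$ (i.e.\ $U(i)$ injective in $Ch_{\mathbb{K}}^+$) and a commutative square with bottom map into $C$ and right map $\pi$, I want to produce a diagonal filler $L\rightarrow C\times P^*(V)$. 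Here I would exploit the product decomposition: a map into $C\times P^*(V)$ is the same as a pair consisting of a map into $C$ (already prescribed by the square, via the composite $L\rightarrow C$) and a \emph{linear} map $L\rightarrow V$ extending the given linear map $K\rightarrow V$, using the universal property of the cofree coalgebra $P^*(V)$ established in Theorem 1.5 together with the characterization of the product in Lemma 2.7. The filler thus reduces to extending a linear map along the injection $U(i):U(K)\hookrightarrow U(L)$ with values in the acyclic complex $V$; this is possible precisely because $H_*(V)=0$ makes $V\rightarrow 0$ an acyclic fibration in $Ch_{\mathbb{K}}^+$, against which every injection lifts.

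For part (ii), I would combine (i) with the embedding Lemma 2.10. Given $f:D\rightarrow C$, first apply Lemma 2.10 to embed the underlying complex $U(D)$ into an acyclic complex $V$ with $H_*(V)=0$, so that the inclusion $U(D)\hookrightarrow V$ is an injection. I then set $X=C\times P^*(V)$ and take $q:X\rightarrow C$ to be the projection, which by part (i) is an acyclic fibration with the RLP against all cofibrations. The map $j:D\rightarrow X$ should be the one induced by the pair $(f,\iota)$, where $\iota:D\rightarrow P^*(V)$ is the coalgebra map adjoint (via Theorem 1.5) to the chosen linear embedding $U(D)\hookrightarrow V$; concretely $j$ is $(f,\iota):D\rightarrow C\times P^*(V)$ through the universal property of the product from Lemma 2.7. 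One checks $q\circ j=f$ immediately, since $q$ is projection onto the first factor. It then remains to verify that $j$ is a cofibration, i.e.\ that $U(j)$ is injective in $Ch_{\mathbb{K}}^+$.

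The main obstacle I expect is precisely this last injectivity check for $j$. Injectivity of $U(j)$ should follow from injectivity of the second component: the composite of $U(j)$ with the projection to $U(P^*(V))$ factors through the embedding $U(D)\hookrightarrow V\hookrightarrow U(P^*(V))$ (the latter being the canonical inclusion of $V=P(1)^*\otimes V$ as the arity-one summand of the cofree coalgebra, which is injective because $P(1)=\mathbb{K}$). Since this embedding was chosen injective via Lemma 2.10, the second component of $U(j)$ is injective, and hence so is $U(j)$ itself. The care needed here is to confirm that the coalgebra adjoint $\iota$ really does recover the chosen linear embedding on the arity-one component, so that injectivity transfers from the linear setting to the coalgebra map; this relies on the explicit form of the cofree coalgebra in Theorem 1.5 and on the hypotheses $P(0)=0$, $P(1)=\mathbb{K}$. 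Everything else is a routine assembly of the universal properties already recorded in Lemma 2.7 and Theorem 1.5.
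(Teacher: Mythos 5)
Your proposal is correct and follows essentially the same route as the paper: for (i), reduce the lifting problem against $\pi:C\times P^*(V)\to C$ via the product decomposition and the adjunction $U\dashv P^*$ to a lifting problem against the acyclic fibration $V\to 0$ in $Ch_{\mathbb{K}}^+$; for (ii), embed $U(D)$ into an acyclic $V$, set $X=C\times P^*(V)$, $j=(f,\iota)$, and deduce injectivity of $j$ from the fact that the arity-one projection of the cofree coalgebra recovers the chosen embedding. The paper's verification of the last point is exactly the identity $\pi\circ pr_2\circ j=i$ that you flag as the point needing care.
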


\begin{proof}
(i) According to Corollary 2.10, the map $C\times P^*(V)\rightarrow C$ is a weak equivalence
so it remains to prove that it has the right lifting property with respect to all cofibrations
(which implies in particular that it is a fibration). Let us consider the following commutative
square in $^P Ch_{\mathbb{K}}$:
\[
\xymatrix{ A \ar[d]_i \ar[r]^-a & C\times P^*(V) \ar[d] \\ B\ar[r]_b & C}
\]
where $i$ is a cofibration. A lifting in this square is equivalent to a lifting in each of the
two squares
\[
\xymatrix{ A \ar[d]_i \ar[r] & C \ar@{=}[d] \\ B \ar[r] & C}
\]
and
\[
\xymatrix{ A \ar[d]_i \ar[r] & P^*(V) \ar[d] \\ B \ar[r] & 0}.
\]

In the first square this is obvious, just take the bottom map $B\rightarrow C$ as a lifting.
In the second square, via the adjunction $U: {}^P Ch_{\mathbb{K}}^+\rightleftarrows Ch_{\mathbb{K}}^+:P^*$,
the lifting problem is equivalent to a lifting problem in the following square of $Ch_{\mathbb{K}}^+$:
\[
\xymatrix{ U(A) \ar[d]_{U(i)} \ar[r] & V \ar[d] \\ U(B) \ar[r] & 0}.
\]
The map $V\rightarrow 0$ is degreewise surjective so it is a fibration of $Ch_{\mathbb{K}}^+$,
which is acyclic because $H_*(V)=0$. The map $i$ is a cofibration, so $U(i)$ is a cofibration
by definition and has therefore the left lifting property with respect to acyclic fibrations.

(ii) According to Lemma 2.11, there exists an injection $i:U(D)\hookrightarrow V$ in $Ch_{\mathbb{K}}^+$
where $V$ is such that $H_*(V)=0$. Let us set $X=C\times P^*(V)$, $q:X\rightarrow C$ the projection and
\[
j=(f,\tilde{i}):D\rightarrow C\times P^*(V)
\]
where $\tilde{i}:D\rightarrow P^*(V)$ is the factorization of $i$ by universal property of the cofree $P$-coalgebra.
We have $q\circ j=f$. According to (i), the map $q$ is an acyclic fibration with the right lifting property with
respect to all cofibrations. It remains to prove that $j$ is a cofibration. Let us consider the composite
\[
D \stackrel{j}{\rightarrow} C\times P^*(V) \stackrel{pr_2}{\rightarrow} P^*(V) \stackrel{\pi}{\rightarrow} V
\]
where $pr_2$ is the projection on the second component and $\pi$ the projection associated to the cofree
$P$-coalgebra on $V$. We have $\pi\circ pr_2\circ j=\pi\circ\tilde{i}=i$ by definition of $\tilde{i}$.
The map $i$ is injective so $j$ is also injective, which implies that $U(j)$ is a cofibration in $Ch_{\mathbb{K}}^+$.
By definition it means that $j$ is a cofibration in $^P Ch_{\mathbb{K}}^+$.
\end{proof}

\textbf{M4 (i).} Let $p:X\rightarrow Y$ be an acyclic fibration, let us consider the commutative square
\[
\xymatrix{ C \ar[d]_i \ar[r]^a & X \ar[d]^p \\ D \ar[r]_b & Y}
\]
where $i$ is a cofibration. According to Proposition 2.12, the map $p$ admits a factorization $p=q\circ j$
where $j:X\rightarrow T$ is a cofibration and $q:T\rightarrow Y$ an acyclic fibration with the right lifting
property with respect to all cofibrations. Axiom M2 implies that $j$ is a weak equivalence.
Let us consider the commutative square
\[
\xymatrix{ X \ar[d]_j \ar@{=}[r] & X \ar[d]^p \\ T \ar[r]_q & Y}.
\]
According to axiom M4 (ii), there exists a lifting $r:T\rightarrow X$ in this square and $p$ is consequently
a retract of $q$ via the following retraction diagram:
\[
\xymatrix{ X \ar[d]_p \ar[r]^j & T \ar[d]_q \ar[r]^r & X \ar[d]^p \\ Y \ar@{=}[r] & Y \ar@{=}[r] & Y}.
\]
The fact that $p$ is a retract of $q$ implies that $p$ inherits from $q$
the property of right lifting property with respect to cofibrations. Indeed, we get the commutative diagram
\[
\xymatrix{C \ar[d]_i \ar[r]^a & X \ar[d]_p \ar[r]^j & T \ar[d]_q \ar[r]^r & X \ar[d]^p \\ D \ar[r]_b & Y \ar@{=}[r] & Y \ar@{=}[r] & Y}.
\]
In the square
\[
\xymatrix{ C \ar[d]_i \ar[r]^{j\circ a} & T \ar[d]^q \\ D \ar[r]_b & Y}
\]
there exists a lifting $h$ by property of $q$. Now let us define $\tilde{h}=r\circ h$.
Then
\[
\tilde{h}\circ i = r\circ h\circ i =r\circ j\circ a = a
\]
and
\[
p\circ \tilde{h} = p\circ r\circ h = q\circ h = b,
\]
so $\tilde{h}$ is the desired lifting.
This concludes the proof of M4 (i).

\textbf{Generating (acyclic) cofibrations} We first need two preliminary lemmas:
\begin{lem}
Let $C$ be a $P$-coalgebra. For every homogeneous element $x\in C$ there exists a sub-$P$-coalgebra $D\subset C$
of finite dimension such that $x\in D$ and $D_k=0$ for every $k>deg(x)$.
\end{lem}

\begin{proof}
Let $x\in C_n$ be an homogeneous element of degree $n$ and $p\in P(m)$.
The element $p$ gives rise to an operation $p^*:C\rightarrow C^{\otimes m}$. We have the formula
\begin{eqnarray*}
p^*(x) & = & \sum_{i_1+...+i_m=n}(\sum x_{i_1}\otimes...\otimes x_{i_m})\\
 & = & \sum_{(x)} x_{(1)}\otimes...\otimes x_{(m)}\\
\end{eqnarray*}
where the second line is written in Sweedler's notation.

We do a recursive reasoning on the degree $n$ of $x$. For $n=0$, we have $x=0$ since $C_0=0$
and $0$ is the trivial sub-$P$-coalgebra.
Now suppose the lemma true for every $k<n$. Let $x\in C_n$ and $p^*(x)$ as above, for a certain element $p\in P(m)$.
By hypothesis, there exist sub-$P$-coalgebras of finite dimension $D_{(1)}(p),...,D_{(m)}(p)$ such that for every
$1\leq j \leq m$, we have $x_{(j)}\in D_{(j)}(p)$ and $(D_{(j)}(p))_l=0$ for $l>deg(x_{(j)})$.
We set
\[
D(p)=\sum D_{(1)}(p)+...+\sum D_{(m)}(p).
\]
A finite sum of sub-$P$-coalgebras is stable under the operations of the $P$-coalgebra structure of $C$ and thus
form also a sub-$P$-coalgebra of $C$.
To build our sub-$P$-coalgebra $D$, we need to include $x$ and all its images under the operations induced by $P$.
We need also to include the image $dx$ of $x$ under the differential of $C$.
For this aim, we set
\[
D=\mathbb{K}.x\oplus D(dx)\oplus \sum_p D(p)
\]
where $D(dx)$ is the sub-$P$-coalgebra containing $dx$ given by our hypothesis (which exists since $deg(dx)<n$),
and the sum $\sum_p$ ranges over fixed bases of each $P(m)$ for every $m$.
This sum is actually finite. Indeed, each $P(m)$ is of finite dimension, and when $m>deg(x)$ we have $p^*(x)=0$ for degree reasons.
Consequently, the space $D$ is a sub-$P$-coalgebra of $C$ containing $x$.
It is of finite dimension as a finite sum of finite dimensional spaces.
Moreover, since for every $1\leq j\leq m$ we have $deg(x)>deg(x_{(j)})$, this construction
implies that $D_l=0$ if $l>deg(x)$.
\end{proof}

\begin{lem}
Let $j:C\rightarrow D$ be an acyclic cofibration and $x\in D$ a homogeneous element. Then there exists a sub-$P$-coalgebra $B\subseteq D$
such that:

(i) $x\in B$;

(ii) $B$ has a countable homogeneous basis;

(iii) the injection $C\cap B\hookrightarrow B$ is an acyclic cofibration in $^P Ch_{\mathbb{K}}^+$
(we denote also by $C$ the image of $C$ under $j$, since $j$ is injective and thus $j(C)\cong C$).
\end{lem}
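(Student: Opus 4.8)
The plan is to construct $B$ as the union of an increasing countable chain of finite-dimensional sub-$P$-coalgebras, arranging at each stage that the cycles produced so far become boundaries in the relevant quotient. Since $j$ is both a cofibration and a weak equivalence, identifying $C$ with its image $j(C)\subseteq D$ we obtain a short exact sequence $0\rightarrow C\rightarrow D\rightarrow D/C\rightarrow 0$ of complexes of $Ch_{\mathbb{K}}^+$ in which $H_*(D/C)=0$; the entire argument will take place inside the acyclic complex $D/C$.

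First I would record a reformulation of condition (iii). For any sub-$P$-coalgebra $B\subseteq D$ the inclusion $C\cap B\hookrightarrow B$ is degreewise injective, hence a cofibration by the very definition of the model structure on $^P Ch_{\mathbb{K}}^+$. The second isomorphism theorem gives $B/(C\cap B)\cong (B+C)/C$, a subcomplex of $D/C$, and the long exact homology sequence shows that $C\cap B\hookrightarrow B$ is a weak equivalence if and only if $H_*\big((B+C)/C\big)=0$. Thus proving (iii) amounts to arranging that the image of $B$ in $D/C$ is acyclic.

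The inductive construction then proceeds as follows. I would start with a finite-dimensional sub-$P$-coalgebra $B_0\ni x$, supplied by the preceding lemma; this already secures (i). Assume a finite-dimensional $B_n$ is built. The subcomplex $(B_n+C)/C$ of $D/C$ is finite-dimensional, so its space of cycles admits a finite basis $z_1,\dots,z_k$. As $D/C$ is acyclic each $z_l$ is a boundary, say $z_l=d\overline{w}_l$; I would lift each $\overline{w}_l$ to an element $w_l\in D$ and let $B_{n+1}$ be the finite-dimensional sub-$P$-coalgebra generated, again via the preceding lemma, by $B_n$ together with the $w_l$. By construction every cycle of $(B_n+C)/C$ becomes a boundary in $(B_{n+1}+C)/C$.

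Finally I would set $B=\bigcup_n B_n$. As a countable increasing union of finite-dimensional spaces it has a countable homogeneous basis, giving (ii), and being a filtered union of sub-$P$-coalgebras it is itself a sub-$P$-coalgebra of $D$. Its image $(B+C)/C=\bigcup_n (B_n+C)/C$ is acyclic: any cycle lies in some $(B_n+C)/C$ and hence already bounds in $(B_{n+1}+C)/C\subseteq (B+C)/C$. By the reformulation of the second paragraph this yields (iii). The main obstacle is precisely the bookkeeping that keeps every stage finite-dimensional while still killing all the homology: one must check that passing from the cycles of $(B_n+C)/C$ to their chosen bounding chains, and then to the sub-$P$-coalgebra these generate, never leaves the finite-dimensional, countable setting, and this is exactly what the preceding lemma guarantees.
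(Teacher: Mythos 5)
Your proposal is correct and follows essentially the same route as the paper: an increasing chain of finite-dimensional sub-$P$-coalgebras built from the preceding lemma, killing the cycles of the successive quotients by means of the acyclicity of $D/C$, and then a colimit argument showing that every cycle of $B/(C\cap B)$ bounds. The only cosmetic differences are that you phrase the quotients as $(B_n+C)/C\subseteq D/C$ via the second isomorphism theorem instead of $B_n/(C\cap B_n)$, and that you take a basis of the whole space of cycles rather than representatives spanning the homology; neither changes the argument.
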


\begin{proof}We want to define recursively sub-$P$-coalgebras
\[
B(1)\subseteq B(2) \subseteq...\subseteq D
\]
such that $x\in B(1)$, each $B(n)$ is finite dimensional and the induced map
\[
\frac{B(n-1)}{C\cap B(n-1)}\rightarrow \frac{B(n)}{C\cap B(n)}
\]
is zero in homology. This map is well defined, since we do the quotient by an intersection of two sub-$P$-coalgebras which is still
a sub-$P$-coalgebra. This property will be essential to prove the acyclicity of the injection the injection $C\cap B\hookrightarrow B$.

The $P$-coalgebra $B(1)$ is given by Lemma 2.13. Now suppose that for some integer $n\geq 1$ the coalgebra $B(n-1)$
has been well constructed. The space $B(n-1)$ is of finite dimension, so we can choose a finite set of homogeneous elements
$z_i\in B(n-1)$ giving cycles $\overline{z_i}\in \frac{B(n-1)}{C\cap B(n-1)}$, such that the homology classes of the $\overline{z_i}$
span $H_*(\frac{B(n-1)}{C\cap B(n-1)})$. The existence of an acyclic cofibration $j:C\rightarrow D$ implies that
$H_*(D/C)=0$. Consequently, there exists elements $z'_i\in D$ such that $\delta (z'_i)=z_i mod C$.
For every $i$, Lemma 2.13 provides us a finite dimensional sub-$P$-coalgebra
$A(z'_i)\subseteq D$ containing $z'_i$. We can then define
\[
B(n)=B(n-1)+\sum_i A(z'_i).
\]
The sub-$P$-coalgebra $B(n)$ is of finite dimension because it is the sum of finite dimensional sub-$p$-coalgebras.
We have $\delta(z'_i)-z_i\in C$ and  $z_i,z'_i\in B(n)$, so $\delta(z'_i)-z_i\in C\cap B(n)$,
hence the induced map in homology
\[
H_*(\frac{B(n-1)}{C\cap B(n-1)})\rightarrow H_*(\frac{B(n)}{C\cap B(n)})
\]
is zero because it sends the homology classes of the $\overline{z_i}$ to $0$.

Let us define $B=\bigcup B(n)$ and prove that $C\cap B\hookrightarrow B$ is an acyclic cofibration.
Firs it is injective so it is a cofibration. To prove its acyclicity, let us consider the following
short exact sequence:
\[
0\rightarrow B\cap C\hookrightarrow B\rightarrow \frac{B}{C\cap B}\rightarrow 0.
\]
It is sufficient to consider the long exact sequence induced by this sequence in homology and to
prove that $H_*(\frac{B}{C\cap B})=0$. Let $z\in B$ such that $\partial(\overline{z})=0$ in
$\frac{B}{C\cap B}$, where $\partial$ is the differential of $\frac{B}{C\cap B}$.
We have $\partial(z)\in B\cap C=\bigcup B(n)\cap C$ and $B(1)\subseteq...\subseteq D$
so there exists an integer $n$ such that $z\in B(n-1)$ and $\partial(z)\in B(n-1)\cap C$.
It implies that $[\overline{z}]\in H_*(\frac{B(n-1)}{C\cap B(n-1)})$, where $[\overline{z}]$ is
the homology class of $\overline{z}$. Thus $[\overline{z}]=0$ in $H_*(\frac{B(n)}{C\cap B(n)})$,
since the map $H_*(\frac{B(n-1)}{C\cap B(n-1)})\rightarrow H_*(\frac{B(n)}{C\cap B(n)})$ is zero
in homology. We deduce that $z=\partial(b)+B(n)\cap C$ for a certain $b\in B(n)$, so
$\overline{z}=\partial(\overline{b})$ in $\frac{B}{B\cap C}$ (the projection $x\mapsto \overline{x}$
commutes with the differentials). Finally, it means that every cycle of $\frac{B}{B\cap C}$ is
a boundary, i.e that $H_*(\frac{B}{C\cap B})=0$.
To conclude, the complex $B$ is a colimit over $\mathbb{N}$ of finite dimensional complexes
and thus has a homogeneous countable basis.
\end{proof}

Now we can give a characterization of generating cofibrations and generating acyclic cofibrations.

\begin{prop}
A morphism $p:X\rightarrow Y$ of $^P Ch_{\mathbb{K}}^+$ is

(i) a fibration if and only if it has the right lifting property with respect to the acyclic cofibrations
$A\hookrightarrow B$ where $B$ has a countable homogeneous basis;

(ii) an acyclic fibration if and only if it has the right lifting property with respect to the cofibrations
$A\hookrightarrow B$ where $B$ is of finite dimension.
\end{prop}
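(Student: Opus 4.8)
The plan is to dispatch each equivalence by observing that one implication is formal and then extracting the content from a transfinite extension of partial lifts driven by the essential-smallness Lemmas 2.13 and 2.14. The \emph{only if} directions require no work: a fibration lifts by definition against every acyclic cofibration, hence against the subclass in (i); and an acyclic fibration lifts against every cofibration by axiom M4 (i), hence against the subclass in (ii).

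For the \emph{if} direction of (i), suppose $p$ has the right lifting property against all acyclic cofibrations with countable-basis target, fix an arbitrary acyclic cofibration $i:A\hookrightarrow B$ and a square to be filled with legs $a:A\to X$ and $b:B\to Y$. I would apply Zorn's lemma to the poset of pairs $(C,h)$ with $A\subseteq C\subseteq B$ a sub-$P$-coalgebra such that $H_*(C/A)=0$ and $h:C\to X$ a morphism satisfying $h|_A=a$ and $ph=b|_C$, ordered by extension. Chains have upper bounds given by unions, the acyclicity passing to the colimit because homology commutes with directed colimits. Let $(M,h)$ be maximal and suppose $M\neq B$; pick a homogeneous $x\in B\setminus M$. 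From the exact sequence $0\to M/A\to B/A\to B/M\to 0$ together with $H_*(M/A)=H_*(B/A)=0$ one gets $H_*(B/M)=0$, so $M\hookrightarrow B$ is an acyclic cofibration and Lemma 2.14 yields a sub-$P$-coalgebra $\widetilde{B}\subseteq B$ containing $x$, with countable homogeneous basis, such that $M\cap\widetilde{B}\hookrightarrow\widetilde{B}$ is an acyclic cofibration. The hypothesis on $p$ now solves the lifting problem whose left leg is $M\cap\widetilde{B}\hookrightarrow\widetilde{B}$, producing $k:\widetilde{B}\to X$ that agrees with $h$ on $M\cap\widetilde{B}$. Gluing $h$ and $k$ defines a lift on the sub-$P$-coalgebra $M+\widetilde{B}$, strictly larger than $M$, contradicting maximality; hence $M=B$ and $p$ is a fibration.

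The \emph{if} direction of (ii) proceeds in two stages. First, the identical Zorn scheme, now without the acyclicity constraint and with Lemma 2.13 supplying for each homogeneous $x$ a finite-dimensional sub-$P$-coalgebra $D\ni x$ against which the hypothesis provides a lift to glue, shows that $p$ has the right lifting property against \emph{all} cofibrations. Second, to see that $p$ is a weak equivalence I would invoke Proposition 2.12 (ii) to factor $p=q\circ j$ with $j$ a cofibration and $q$ an acyclic fibration, lift in the square with legs $j$ and $q$ (possible since $p$ lifts against the cofibration $j$), thereby exhibiting $p$ as a retract of $q$, and conclude by stability of weak equivalences under retracts. Being simultaneously a weak equivalence and a map lifting against all cofibrations, hence in particular a fibration, $p$ is an acyclic fibration.

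The step I expect to require the most care is the coherence of the transfinite extension: one must check that gluing two lifts which agree on the intersection of two sub-$P$-coalgebras is again a morphism of $P$-coalgebras on their sum, and, in case (i), that the enlarged quotient stays acyclic, which follows from the sequence $0\to M/A\to (M+\widetilde{B})/A\to \widetilde{B}/(M\cap\widetilde{B})\to 0$ with both outer terms acyclic. Once these bookkeeping points are secured, the whole argument rests on Lemmas 2.13 and 2.14, which are precisely what make the restricted test classes detect (acyclic) fibrations.
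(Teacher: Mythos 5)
Your proof is correct and follows essentially the same route as the paper's: a Zorn's-lemma argument on the poset of partial lifts, with Lemma 2.14 (resp.\ Lemma 2.13) supplying the countable-basis (resp.\ finite-dimensional) sub-$P$-coalgebra used to enlarge a maximal partial lift and force a contradiction. The only substantive difference is that you spell out the last step of (ii) --- upgrading the right lifting property against all cofibrations to the conclusion that $p$ is an acyclic fibration, via the factorization of Proposition 2.12(ii) and a retract argument --- a step the paper leaves implicit.
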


\begin{proof}
(i) One of the two implications is obvious. Indeed, if $p$ is a fibration then it has the right lifting property
with respect to acyclic cofibrations by definition. Conversely, suppose that $p$ has the right lifting property
with respect to the acyclic cofibrations $A\hookrightarrow B$ where $B$ has a countable homogeneous basis. We consider
the following lifting problem:
\[
\xymatrix{ C\ar[d]_j \ar[r]^f & X \ar[d]^p \\ D \ar@{-->}[ur] \ar[r] & Y}
\]
where $j$ is an acyclic cofibration. Let us define $\Omega$ as the set of pairs $(\overline{D},g)$,
where $\overline{D}$ fits in the composite of two acyclic cofibrations
\[
C\hookrightarrow \overline{D}\hookrightarrow D
\]
such that this composite is equal to $j$. The map $g:\overline{D}\rightarrow X$ is a lifting in
\[
\xymatrix{ C \ar[d] \ar[rr]^f & & X \ar[d]^p \\ \overline{D} \ar[r] & D \ar[r] & Y}.
\]
The collection $\Omega$ is really a set: it is the union of hom sets $Mor_{^P Ch_{\mathbb{K}}^+}(\overline{D},X)$
indexed by a subset of the set of subcomplexes $\overline{D}$ of $D$.
Recall that cofibrations are injective $P$-coalgebras morphisms. We endow $\Omega$
with a partial order defined by $(\overline{D_1},g_1)\leq (\overline{D_2},g_2)$
if $\overline{D_1}\subseteq\overline{D_2}$ and $g_2\mid_{\overline{D_1}}=g_1$.
The commutative square
\[
\xymatrix{ C\ar@{=}[d] \ar[rr]^f & & X \ar[d]^p \\ C \ar[r]_j & D\ar[r] & Y}
\]
admits $f$ as an obvious lifting, so $(C,f)\in\Omega$ and thus $\Omega$ is not empty.
Moreover, any totally ordered subset of $\Omega$ admits an upper bound, just take the
sum of its elements. We can therefore apply Zorn lemma. Let $(E,g)\in\Omega$ be a
maximal element. We know that $E$ is injected in $D$ by definition, and we want to
prove that $D$ is injected in $E$ in order to obtain $E=D$.

Let $x\in D$ be a homogeneous element. According to Lemma 2.14 applied to the acyclic
cofibration $E\hookrightarrow D$, there exists a sub-$P$-coalgebra
$B\subseteq D$ with a countable homogeneous basis such that $x\in B$ and $E\cap B\hookrightarrow B$ is an acyclic cofibration.
The lifting problem
\[
\xymatrix{ E\cap B \ar[d] \ar[r] & E \ar[r]^g & X \ar[d] \\ B\ar@{-->}[urr]^h \ar[r] & D\ar[r] & Y}
\]
admits a solution $h$ by hypothesis about $p$. We therefore extend $g$ into a map
$\tilde{g}:E+B\rightarrow X$ such that $\tilde{g}\mid_E=g$, $\tilde{g}\mid_B=h$.
According to the diagram above, we have $h\mid_{E\cap B}=g\mid_{E\cap B}$ so
$\tilde{g}$ is well defined.The short exact sequences
\[
0 \rightarrow E\cap B \rightarrow B \rightarrow \frac{B}{E\cap B} \rightarrow 0
\]
and
\[
0 \rightarrow E \rightarrow E+B \rightarrow \frac{E+B}{B} \rightarrow 0
\]
induce long exact sequences in homology
\[
...\rightarrow H_{n+1}(\frac{B}{E\cap B}) \rightarrow H_n(E\cap B) \rightarrow H_n(B) \rightarrow H_n(\frac{B}{E\cap B}) \rightarrow...
\]
and
\[
...\rightarrow H_{n+1}(\frac{E+B}{E}) \rightarrow H_n(E) \rightarrow H_n(E+B) \rightarrow H_n(\frac{E+B}{E}) \rightarrow...
\]
But $E\cap B\hookrightarrow B$ induces an isomorphism in homology so in the first exact sequence $H_*(\frac{B}{E\cap B})=0$.
Furthermore, the isomorphism $\frac{B}{E\cap B}\cong\frac{E+B}{E}$ implies that $H_*(\frac{E+B}{E})=0$.
Accordingly, the map $E\hookrightarrow E+B$ in the second exact sequence induces an isomorphism in homology, i.e
$E\hookrightarrow E+B$ is an acyclic cofibration. It means that $(E+B,\tilde{g})\in\Omega$, and by definition of
$\tilde{g}$ the inequality $(E,g)\leq (E+B,\tilde{g})$ holds in $\Omega$. Given that $(E,g)$ is supposed to be
maximal, we conclude that $E=E+B$, hence $x\in E$ and $E=D$. The map $g$ is the desired lifting,
and the map $p$ is a fibration.

(ii) If $p$ is an acyclic fibration, then $p$ has the right lifting property with respect to cofibrations according
to axiom M4 (i). Conversely, let us suppose that $p$ has the right lifting property with respect to cofibrations
$A\hookrightarrow B$ where $B$ is finite dimensional. The proof is similar to that of (i) with a slight change in
the definition of $\Omega$. Indeed, we consider the lifting problem
\[
\xymatrix{ C\ar[d]_j \ar[r]^f & X \ar[d]^p \\ D \ar[r] & Y}
\]
where $j$ is a cofibration. We define $\Omega$ as the set of pairs $(\overline{D},g)$ where $\overline{D}$
fits in a composite of cofibrations $C\hookrightarrow\overline{D}\hookrightarrow D$ such that this composite
is equal to $j$. We define the same partial order on $\Omega$ as in (i), and $\Omega$ is clearly not empty
since $(C,f)\in\Omega$. The set $\Omega$ is inductive so we can apply Zorn's lemma. Let $(E,g)$ be a maximal
element of $\Omega$, as before $E$ is injected in $D$ and we want to prove that $D$ is injected in $E$.
Let $x\in D$ be a homogeneous element, then there exists a finite dimensional
sub-$P$-coalgebra $B\subseteq D$ containing $x$ (this property is an adaptation to $P$-coalgebras of
Proposition 1.5 in \cite{GG}). The map $p$ has the right lifting property with respect to
$E\cap B\hookrightarrow B$ by hypothesis (since $E\cap B\hookrightarrow B$ is an injection of $P$-coalgebras,
thus a cofibration, and $B$ is of finite dimension), so the method of (i) works here. We extend $g$ to $\tilde{g}:E+B\rightarrow X$,
we have $(E+B,\tilde{g})\in\Omega$ and $(E,g)\leq (E+B,\tilde{g})$. The maximality of $(E,g)$ implies that
$E=E+B$ and $g:E=D\rightarrow X$ is the desired lifting.
\end{proof}

\textbf{M5 (ii).}We need here to use a refined version of the usual small object argument.
We use this expression to emphasize the fact that we will not use the usual simplifying hypothesis of smallness with respect
to any sequences of morphisms.
We will consider smallness only with respect to injections systems. Moreover, we will have to consider
colimits running over a certain ordinal bigger than $\mathbb{N}$.

We use the notations of section 1.3. In the case where $\mathcal{C}= {}^P Ch_{\mathbb{K}}^+$ and $\mathcal{D}$ is the collection of
injections of $P$-coalgebras, the colimit $colim_{\beta<\lambda} B(\beta)$ is a union $\bigcup_{\beta<\lambda} B(\beta)$.
We say that a $P$-coalgebra $A$ is small with respect to direct systems of injections if the map
\[
\bigcup_{\beta<\lambda}Mor_{^P Ch_{\mathbb{K}}^+}(A,B(\beta))\rightarrow Mor_{^P Ch_{\mathbb{K}}^+}(A,\bigcup_{\beta<\lambda} B(\beta))
\]
is a bijection.
Consider a morphism $f$ of $\mathcal{C}$ and a family of morphisms $\mathcal{F}=\{f_i:A_i\rightarrow B_i\}_{i\in I}$
such that the $A_i$ are small with respect to injections systems. If we can prove that the $i_{\beta}$
obtained in the construction of the $G^{\beta}(\mathcal{F},f)$ (see section 1.3 for the notation)
are injections, then we can use this refined
version of the small object argument to obtain a factorization $f=f_{\infty}\circ i_{\infty}$
where $f_{\infty}$ has the right lifting property with respect to the morphisms of $\mathcal{F}$
and $i_{\infty}$ is an injection (the injectivity passes to the transfinite composite).
This is the argument we are going to use to prove axiom M5 (ii) in $^P Ch_{\mathbb{K}}^+$.

Recall that the generating acyclic cofibrations of $^P Ch_{\mathbb{K}}^+$ are the
acyclic injections $j_i:A_i\hookrightarrow B_i$ of $P$-coalgebras such that the $B_i$
have countable homogeneous bases. In order to apply the refined small object argument explained above,
we need the following lemma:
\begin{lem}
Let $C$ be a object of $^P Ch_{\mathbb{K}}^+$ and $\kappa$ a cardinal. If $U(C)$ is $\kappa$-small with respect to injections systems, then
so is $C$ in $^P Ch_{\mathbb{K}}^+$.
\end{lem}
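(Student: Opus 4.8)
The plan is to transport smallness across the adjunction $U\dashv P^*$ by comparing the relevant maps of morphism sets for $C$ with those for $U(C)$. I will use three facts established earlier: that $U$ creates the colimits of injection systems, so that the colimit of a $\lambda$-sequence of injections of $P$-coalgebras is the union $\bigcup_{\beta<\lambda}B(\beta)$ and satisfies $U(\bigcup_{\beta}B(\beta))=\bigcup_{\beta}U(B(\beta))$; that $U$ is faithful, so the induced maps on morphism sets are injective; and that the cofree functor $P^*$ preserves injections. This last point is where the characteristic-zero hypothesis enters, since $P^*(V)=\bigoplus_{r\geq1}P(r)^*\otimes_{\Sigma_r}V^{\otimes r}$ is built from tensor powers and $\Sigma_r$-coinvariants, both exact functors over a field of characteristic zero. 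Fix a $\kappa$-filtered ordinal $\lambda$ and a $\lambda$-sequence of injections $B(0)\hookrightarrow B(1)\hookrightarrow\cdots$ of $P$-coalgebras. I must show that the canonical map
\[
\Phi:colim_{\beta<\lambda}Mor_{^P Ch_{\mathbb{K}}^+}(C,B(\beta))\rightarrow Mor_{^P Ch_{\mathbb{K}}^+}(C,\bigcup_{\beta<\lambda}B(\beta))
\]
is a bijection, knowing by hypothesis that the analogous map $\Phi_U$ for $U(C)$ in $Ch_{\mathbb{K}}^+$ is a bijection.

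Applying $U$ objectwise produces a commutative square whose horizontal maps are $\Phi$ (top) and $\Phi_U$ (bottom), and whose vertical maps are the forgetful maps on morphism sets; by faithfulness these are injective, and the left vertical map $\bar U$ is a filtered colimit of injective maps of sets, hence itself injective. For injectivity of $\Phi$, I note $U\circ\Phi=\Phi_U\circ\bar U$: since $\Phi_U$ and $\bar U$ are injective the composite is injective, and faithfulness of $U$ then forces $\Phi$ injective. For surjectivity, let $h:C\rightarrow\bigcup_{\beta}B(\beta)$ be a morphism of $P$-coalgebras. Bijectivity of $\Phi_U$ lets me factor the underlying chain map as $U(h)=\iota_{\beta_0}\circ g$ for some $\beta_0<\lambda$ and some chain map $g:U(C)\rightarrow U(B(\beta_0))$, where $\iota_{\beta_0}$ denotes the canonical (injective) coalgebra morphism into the union.

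The hard part is to promote the chain map $g$ to a morphism of $P$-coalgebras, that is, to verify $\rho_{B(\beta_0)}\circ g=P^*(g)\circ\rho_C$ for the structure maps. I will establish this by post-composing both sides with $P^*(\iota_{\beta_0})$. On the left, naturality of the structure maps along the coalgebra morphism $\iota_{\beta_0}$ gives $P^*(\iota_{\beta_0})\circ\rho_{B(\beta_0)}=\rho_{\bigcup B}\circ\iota_{\beta_0}$; combining this with $\iota_{\beta_0}\circ g=U(h)$ and the fact that $h$ is a coalgebra morphism rewrites the left-hand side as $P^*(\iota_{\beta_0})\circ P^*(g)\circ\rho_C$, which is exactly the right-hand side after the same post-composition. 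Since $P^*(\iota_{\beta_0})$ is injective, I may cancel it and conclude that $g$ is a morphism of $P$-coalgebras. Then $g$ represents a class in the source of $\Phi$ whose image has underlying chain map $\iota_{\beta_0}\circ g=U(h)$, so by faithfulness of $U$ its image is $h$; this proves surjectivity and hence that $\Phi$ is bijective, establishing that $C$ is $\kappa$-small with respect to injection systems. The only genuine subtlety beyond routine diagram chasing is the preservation of injections by $P^*$ used in the cancellation step.
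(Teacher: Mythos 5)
Your proof is correct and follows essentially the same route as the paper: factor the underlying chain map through some $B(\beta_0)$ using the $\kappa$-smallness of $U(C)$, then upgrade that factorization to a morphism of $P$-coalgebras by post-composing with an injection and cancelling. The only cosmetic difference is that you cancel $P^*(\iota_{\beta_0})$ (using that $P^*$ preserves injections in characteristic zero), whereas the paper works cooperation by cooperation and cancels $\iota_{\beta_0}^{\otimes n}$; the two come to the same thing.
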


\begin{proof}
Let us consider a $\kappa$-filtered ordinal $\lambda$ and a $\lambda$-sequence of injections
\[
...\hookrightarrow B(\beta)\hookrightarrow B(\beta+1)\hookrightarrow ...
\]
of $P$-coalgebras, and let $f:C\rightarrow \bigcup_{\beta<\lambda} B(\beta)$ be a morphism of $P$-coalgebras.
The chain complex $U(C)$ is $\kappa$-small with respect to injections systems, so there exists
an ordinal $\beta<\lambda$ such that we have a unique factorization in $Ch_{\mathbb{K}}^+$
\[
f:C \stackrel{\tilde{f}}{\rightarrow} B(\beta) \hookrightarrow \bigcup_{\beta<\lambda} B(\beta).
\]
The map $f$ is a morphism of $P$-coalgebras and so is $B(\beta) \hookrightarrow \bigcup_{\beta<\lambda} B(\beta)$.
We deduce that $\tilde{f}$ is a morphism of $P$-coalgebras by the following argument.
For any cooperations $p^*_C:C\rightarrow C^{\otimes n}$, $p^*_{\beta}:B(\beta)\rightarrow B(\beta)^{\otimes n}$
and $p^*_{\lambda}:\bigcup_{\beta<\lambda} B(\beta)\rightarrow (\bigcup_{\beta<\lambda} B(\beta))^{\otimes n}$
associated to an element $p\in P^*(n)$, we consider the diagram
\[
\xymatrix{
C \ar[r]^-{\tilde{f}} \ar[d]_{p^*_C} & B(\beta) \ar[d]_{p^*_{\beta}} \ar@{^{(}->}[r]^-{i_{\beta}} & \bigcup_{\beta<\lambda} B(\beta) \ar[d]_{p^*_{\lambda}} \\
C^{\otimes n} \ar[r]_-{\tilde{f}^{\otimes n}} & B(\beta)^{\otimes n} \ar@{^{(}->}[r]_-{i_{\beta}^{\otimes n}} & (\bigcup_{\beta<\lambda} B(\beta))^{\otimes n}
}.
\]
The external rectangle commutes because $f$ is a morphism of $P$-coalgebras.
The right square commutes because $i_{\beta}$ is a morphism of $P$-coalgebras as a transfinite composite of morphisms of $P$-coalgebras.
We deduce that
\[
i_{\beta}^{\otimes n}\circ \tilde{f}^{\otimes n}\circ p^*_C = i_{\beta}^{\otimes n}\circ p^*_{\beta}\circ\tilde{f}.
\]
By injectivity of $i_{\beta}^{\otimes n}$ we get the commutativity of the left square
\[
\tilde{f}^{\otimes n}\circ p^*_C = p^*_{\beta}\circ\tilde{f},
\]
so $\tilde{f}$ is a morphism of $P$-coalgebras.
We have the desired factorization in $^P Ch_{\mathbb{K}}^+$.
\end{proof}

Let us consider the family of generating acyclic cofibrations $\mathcal{F}=\{j_i:A_i\hookrightarrow B_i\}_{i\in I}$.
According to Lemma 2.3.2 of \cite{Hov}, the $U(A_i)$ are $\kappa$-small for a certain cardinal $\kappa$. In particular,
they are $\kappa$-small with respect to injections systems. Lemma 2.16 implies that the $A_i$
are $\kappa$-small with respect to injection systems. Now, let $f:X\rightarrow Y$ be a morphism of $P$-coalgebras
and $\lambda$ a $\kappa$-filtered ordinal.
Recall that the construction of $G^{\beta}(\mathcal{F},f)$, $\beta<\lambda$, is given by a pushout
\[
\xymatrix{ \bigvee_i A_i \ar[d]_{\bigvee_i j_i} \ar[r] & G^{\beta-1}(\mathcal{F},f) \ar[d]^{i_{\beta}} \\
\bigvee_i B_i \ar[r] & G^{\beta}(\mathcal{F},f) }.
\]
The forgetful functor creates the small colimits, so we obtain the same pushout in
$Ch_{\mathbb{K}}^+$ by forgetting $P$-coalgebras structures. By definition of
cofibrations and weak equivalences in $Ch_{\mathbb{K}}^+$, given that $\bigvee_i j_i$
is an acyclic cofibration, the map $U(\bigvee_i j_i)$ is an acyclic cofibration in
$Ch_{\mathbb{K}}^+$. In any model category, acyclic cofibrations are stable by
pushouts, so the $U(i_{\beta})$ are acyclic cofibrations. By definition, it means that
the $i_{\beta}$ are acyclic cofibrations, i.e in our case acyclic injections of $P$-coalgebras.
We use our refined version of the small object argument to obtain a factorization $f=f_{\infty}\circ i_{\infty}$.
Injectivity and acyclicity are two properties which passes to the transfinite composite
$i_{\infty}$, so $i_{\infty}$ is an acyclic cofibration of $^P Ch_{\mathbb{K}}^+$.
Moreover, the map $f_{\infty}$ has by construction the right lifting property
with respect to the generating acyclic cofibrations and forms consequently a fibration.
Our proof is now complete.

\begin{rem}
This method provides us another way to prove M5 (i), by using this time the family of generating cofibrations.
\end{rem}

\section{The model category of bialgebras over a pair of operads in distribution}

Let $P$ be an operad in $Vect_{\mathbb{K}}$. Let $Q$ be an operad in $Vect_{\mathbb{K}}$ such that $Q(0)=0$, $Q(1)=\mathbb{K}$
and the $Q(n)$ are of finite dimension for every $n\in\mathbb{K}$. We suppose that there exists a mixed distributive law
between $P$ and $Q$ (see Definition 1.6). In the following, the operad $P$ will encode the operations of our bialgebras
and the operad $Q$ will encode the cooperations.

Recall that there exists a cofibrantly generated model category structure on the category $_P Ch_{\mathbb{K}}$
of $P$-algebras:
\begin{thm}(see \cite{Hin} or \cite{Fre3})
The category of $P$-algebras $_PCh_{\mathbb{K}}$ inherits a cofibrantly generated model category structure such that
a morphism $f$ of $_PCh_{\mathbb{K}}$ is

(i)a weak equivalence if $U(f)$ is a weak equivalence in $Ch_{\mathbb{K}}$, where $U$ is the forgetful functor;

(ii)a fibration if $U(f)$ is a fibration in $Ch_{\mathbb{K}}$;

(iii)a cofibration if it has the left lifting property with respect to acyclic fibrations.

\end{thm}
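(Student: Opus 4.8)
The plan is to obtain the structure by transferring the cofibrantly generated model structure on $Ch_{\mathbb{K}}$ (Theorem 1.14) along the free-forgetful adjunction
\[
\xymatrix{ P : Ch_{\mathbb{K}} \ar@<0.5ex>[r] & {}_P Ch_{\mathbb{K}} : U \ar@<0.5ex>[l] }
\]
whose left adjoint is the free $P$-algebra functor of Proposition 1.2. Following the standard transfer principle for cofibrantly generated model categories (the argument carried out in \cite{Hin} and \cite{Fre3}), one declares a morphism to be a weak equivalence (resp. fibration) exactly when $U(f)$ is so in $Ch_{\mathbb{K}}$, takes the cofibrations to be the maps with the left lifting property against acyclic fibrations, and proposes the images $P(I)$ and $P(J)$ as generating cofibrations and generating acyclic cofibrations, where $I$ and $J$ denote the generating (acyclic) cofibrations of $Ch_{\mathbb{K}}$.

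First I would dispatch the preliminary hypotheses. The category ${}_P Ch_{\mathbb{K}}$ is complete and cocomplete: limits are created by $U$, while colimits exist because ${}_P Ch_{\mathbb{K}}$ is the category of algebras over the monad $P(-)$ on the cocomplete category $Ch_{\mathbb{K}}$, reflexive coequalizers being constructed from the underlying ones. The smallness needed to run the small object argument (Theorem 1.12) is automatic: by adjunction $Mor_{{}_P Ch_{\mathbb{K}}}(P(V),-)\cong Mor_{Ch_{\mathbb{K}}}(V,U(-))$, and since every chain complex is small (Lemma 1.11) and $U$ preserves the filtered colimits arising along cell attachments, the domains of $P(I)$ and $P(J)$ are small in ${}_P Ch_{\mathbb{K}}$.

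The heart of the argument, and the step I expect to be the main obstacle, is the acyclicity condition: one must show that every relative $P(J)$-cell complex is a weak equivalence. Using Lemma 1.13 (stability of lifting properties under cobase change and transfinite composition) together with the closure of quasi-isomorphisms under filtered colimits, it suffices to treat a single pushout
\[
\xymatrix{ P(C) \ar[r] \ar[d]_{P(i)} & A \ar[d] \\ P(D) \ar[r] & A' }
\]
along a generating acyclic cofibration $i:C\hookrightarrow D$ of $Ch_{\mathbb{K}}$, and to prove that $A\to A'$ is a quasi-isomorphism. Here I would filter $A'$ by an enveloping-operad filtration (the algebra analogue of the enveloping cooperad of Section 2.2) and identify the associated graded in terms of $A$ together with symmetric powers $P(n)\otimes_{\Sigma_n}(D/C)^{\otimes n}$ of the acyclic quotient $D/C$. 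This is precisely where characteristic zero is indispensable: exactly as in Proposition 2.9, the norm (averaging) map exhibits the $\Sigma_n$-coinvariants as a retract of the full tensor power, so taking coinvariants is exact and the acyclicity of $D/C$ propagates through the symmetric powers. Hence every graded piece beyond $A$ is acyclic and $A\to A'$ is a quasi-isomorphism.

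Granting this acyclicity, the transfer theorem applies and produces the cofibrantly generated model structure with the three stated classes of maps; the characterization of cofibrations as the morphisms with the left lifting property against acyclic fibrations then holds by construction, and the fibrations and weak equivalences are detected by $U$ as asserted. This recovers the classical result of Hinich \cite{Hin} and Fresse \cite{Fre3}.
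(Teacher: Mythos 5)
Your proposal follows essentially the same route as the paper, which states this theorem as a recollection with citations to Hinich and Fresse and only remarks that the structure is obtained by transfer along the free--forgetful adjunction, with generating (acyclic) cofibrations the images under $P$ of those of $Ch_{\mathbb{K}}$ --- precisely the transfer argument you flesh out. Your treatment of the key acyclicity step is the standard one; the only imprecision is that the associated graded of the pushout filtration is $U_P(A)(n)\otimes_{\Sigma_n}(D/C)^{\otimes n}$ for the enveloping operad $U_P(A)$ rather than $P(n)\otimes_{\Sigma_n}(D/C)^{\otimes n}$, but you do invoke the enveloping-operad filtration and the characteristic-zero norm-map retraction applies to it verbatim.
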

We can also say that cofibrations are relative cell complexes with respect to the generating cofibrations,
where the generating cofibrations and generating acyclic cofibrations are, as expected,
the images of the generating (acyclic) cofibrations
of $Ch_{\mathbb{K}}$ under the free $P$-algebra functor $P$.

Actually, this structure exists via a transfer of cofibrantly generated model category structure via the
adjunction $P:Ch_{\mathbb{K}}\rightleftarrows {}_P Ch_{\mathbb{K}}:U$ (see \cite{Hin} and \cite{Fre3}).
The forgetful functor creates fibrations and weak equivalences.
The free $P$-algebra functor $P$ preserves generating (acyclic) cofibrations by definition of the generating (acyclic) cofibrations of $_P Ch_{\mathbb{K}}$. Moreover, it preserves
colimits as a left adjoint (it is a general property of adjunctions, see \cite{MLa2} for instance). Thus it preserves
all (acyclic) cofibrations, which are relative cell complexes with respect to the generating (acyclic) cofibrations.
Such a pair of functors is called a Quillen adjunction, and induces an adjunction at the level of the associated homotopy
categories.
According to Theorem 1.9, we can lift this free $P$-algebra functor to the category of $Q$-coalgebras,
so the adjunction
\[
P:Ch_{\mathbb{K}}^+\rightleftarrows {}_P Ch_{\mathbb{K}}^+:U
\]
becomes an adjunction
\[
P: {}^Q Ch_{\mathbb{K}}^+\rightleftarrows {}^Q_P Ch_{\mathbb{K}}^+:U.
\]
Similarly, the adjunction
\[
U: {}^Q Ch_{\mathbb{K}}^+\rightleftarrows Ch_{\mathbb{K}}^+:Q^*
\]
becomes an adjunction
\[
U: {}^Q_P Ch_{\mathbb{K}}^+\rightleftarrows {}_P Ch_{\mathbb{K}}^+:Q^*.
\]
The model category structure on $(P,Q)$-bialgebras is then given by the following theorem:
\begin{thm}
The category $^Q_P Ch_{\mathbb{K}}^+$ inherits a cofibrantly generated model category structure
such that a morphism $f$ of $^Q_P Ch_{\mathbb{K}}^+$ is

(i) a weak equivalence if $U(f)$ is a weak equivalence in $^Q Ch_{\mathbb{K}}^+$
(i.e a weak equivalence in $Ch_{\mathbb{K}}^+$ by definition of the model structure on $^Q Ch_{\mathbb{K}}^+$);

(ii) a fibration if $U(f)$ is a fibration in $^Q Ch_{\mathbb{K}}^+$;

(iii) a cofibration if $f$ has the left lifting property with respect to acyclic fibrations.
\end{thm}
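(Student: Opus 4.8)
The plan is to obtain the model structure on $^Q_P Ch_{\mathbb{K}}^+$ by transferring the cofibrantly generated structure of Theorem 2.1 along the adjunction $P: {}^Q Ch_{\mathbb{K}}^+\rightleftarrows {}^Q_P Ch_{\mathbb{K}}^+:U$. Writing $I$ and $J$ for the generating cofibrations and generating acyclic cofibrations of $^Q Ch_{\mathbb{K}}^+$ produced in Section 2, the candidate generating (acyclic) cofibrations for bialgebras are $P(I)$ and $P(J)$. Since fibrations and weak equivalences are defined through $U$, axioms M2 and M3 are immediate from the corresponding properties in $^Q Ch_{\mathbb{K}}^+$ (and ultimately in $Ch_{\mathbb{K}}^+$), and cofibrations are closed under retracts because they are defined by a left lifting property. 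Axiom M1 follows from the fact that $U$ creates limits, while colimits are built from the bicompleteness of $^Q Ch_{\mathbb{K}}^+$ together with the preservation of reflexive coequalizers and filtered colimits by the free $P$-algebra monad. The lifting axiom M4(ii) holds by definition of the cofibrations.

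The remaining axioms M4(i) and M5 rest on the small object argument, and exactly as in Section 2 we cannot use smallness with respect to all morphisms: we must run the refined argument with respect to systems of injections. First I would record the smallness input. By Lemma 2.3.2 of \cite{Hov} each $U(A_i)$ (domain of a map of $I$ or $J$) is small in $Ch_{\mathbb{K}}^+$, hence small with respect to injection systems; Lemma 2.16 upgrades this to smallness of $A_i$ in $^Q Ch_{\mathbb{K}}^+$, and the adjunction isomorphism $Mor_{{}^Q_P Ch_{\mathbb{K}}^+}(P(A_i),-)\cong Mor_{{}^Q Ch_{\mathbb{K}}^+}(A_i,U(-))$ transports this to smallness of $P(A_i)$ in $^Q_P Ch_{\mathbb{K}}^+$. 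For this transport to be usable I must know that the maps $i_\beta$ arising in the cellular construction are injections of $Q$-coalgebras. Here the second adjunction $U: {}^Q_P Ch_{\mathbb{K}}^+\rightleftarrows {}_P Ch_{\mathbb{K}}^+:Q^*$ enters: its left adjoint $U$ preserves colimits, and the underlying $P$-algebra of a free bialgebra $P(A)$ is the free $P$-algebra on the underlying complex of $A$ (Remark 1.8). Thus applying $U$ to the pushout defining $i_\beta$ yields, in $_P Ch_{\mathbb{K}}^+$, a pushout of $P(U(j_i))$, where $U(j_i)$ is an injection of chain complexes; by Hinich's description of the cofibrations of $P$-algebras \cite{Hin} this pushout is injective on underlying complexes, whence $i_\beta$ is an injection of $Q$-coalgebras. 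The refined small object argument of Section 2 then applies to both $P(I)$ and $P(J)$.

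The crux is the acyclicity needed for \textbf{M5(ii)}, and this is where Theorem 3.1 does the essential work. For a generating acyclic cofibration $j_i:A_i\hookrightarrow B_i$ of coalgebras, $U(j_i)$ is an acyclic cofibration in $Ch_{\mathbb{K}}^+$, so $P(U(j_i))$ is an acyclic cofibration of $P$-algebras by Theorem 3.1. Applying the colimit-preserving functor $U:{}^Q_P Ch_{\mathbb{K}}^+\to {}_P Ch_{\mathbb{K}}^+$ to each pushout shows that $U(i_\beta)$ is a pushout of $P(U(j_i))$ in $_P Ch_{\mathbb{K}}^+$, hence an acyclic cofibration there; since acyclic cofibrations are stable under pushout and transfinite composition, $U(i_\infty)$ is an acyclic cofibration of $P$-algebras, in particular a weak equivalence on underlying complexes. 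As weak equivalences of bialgebras are detected on underlying complexes, $i_\infty$ is an acyclic cofibration of $^Q_P Ch_{\mathbb{K}}^+$, while the small object argument guarantees that the residual map $f_\infty$ has the right lifting property with respect to $P(J)$, that is, $U(f_\infty)$ has the right lifting property with respect to $J$ and is therefore a fibration.

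This proves M5(ii); M5(i) is obtained in the same way from $P(I)$, the cellular map being a cofibration and the residual map an acyclic fibration, with no acyclicity of the cellular part required. Finally M4(i) follows by the retract argument already used for coalgebras: factor an acyclic cofibration $g$ via M5(ii) as $g=p\circ i$ with $i$ a relative $P(J)$-cell complex and $p$ a fibration; two-out-of-three makes $p$ an acyclic fibration, against which $g$ lifts because $g$ is a cofibration, exhibiting $g$ as a retract of $i$ and hence inheriting the left lifting property with respect to fibrations. I expect \textbf{M5(ii)} to be the main obstacle, precisely because it requires controlling injectivity (to license the refined small object argument) and acyclicity (via Theorem 3.1, routed through the second forgetful functor $U:{}^Q_P Ch_{\mathbb{K}}^+\to {}_P Ch_{\mathbb{K}}^+$) simultaneously.
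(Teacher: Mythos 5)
Your proposal is correct and follows essentially the same route as the paper: transfer along $P\dashv U$, generating (acyclic) cofibrations $P(I)$ and $P(J)$, smallness of $P(A_i)$ with respect to injection systems via Lemma 2.16 and the adjunction, injectivity and acyclicity of the cellular maps controlled through the second forgetful functor to ${}_P Ch_{\mathbb{K}}^+$ and Hinich's result, the refined small object argument, and the retract argument for the remaining lifting axiom. The only discrepancies are cosmetic: you label the two halves of M4 in the opposite order from the paper, and you justify cocompleteness via reflexive coequalizers rather than the paper's appeal to the forgetful functor to $P$-algebras creating colimits.
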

It is clear that this three classes of morphisms are stable by composition and contain the identity morphisms.
Axioms M2 and M3 are clear, axiom M4 (i) is obvious by definition of the cofibrations.
It remains to prove axioms M1, M4 (ii) and M5.

\textbf{M1.}The forgetful functor $U: {}^Q_P Ch_{\mathbb{K}}^+\rightarrow {}^Q Ch_{\mathbb{K}}^+$ creates
small limits. The proof is the same than in the case of $P$-algebras.
The forgetful functor $U: {}^Q_P Ch_{\mathbb{K}}^+\rightarrow {}_P Ch_{\mathbb{K}}^+$ creates the small
colimits. The proof is the same as in the case of $P$-coalgebras, see section 2.2.

\textbf{Generating (acyclic) cofibrations.}The treatment is similar to the case of $P$-algebras.
Let us note $\{j:A\hookrightarrow B\}$ the family of generating cofibrations
and $\{i:A\hookrightarrow B\}$ the family of generating acyclic cofibrations.
Then the $P(j)$ form the generating cofibrations of $^Q_P Ch_{\mathbb{K}}^+$ and the
$P(i)$ form the generating acyclic cofibrations:
\begin{prop}
Let $f$ be a morphism of $^Q_P Ch_{\mathbb{K}}^+$. Then

(i) $f$ is a fibration if and only if it has the right lifting property with respect to the $P(i)$,
where $i:A\hookrightarrow B$ an acyclic injection of $Q$-coalgebras such that $B$ has a countable homogeneous basis;

(ii) $f$ is an acyclic fibration if and only if it has the right lifting property with respect
to the $P(j)$, where $j:A\hookrightarrow B$ is an injection of $Q$-coalgebras such that $B$
is finite dimensional.
\end{prop}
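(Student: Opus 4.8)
The plan is to deduce both equivalences from the corresponding characterizations in ${}^Q Ch_{\mathbb{K}}^+$ given by Proposition 2.15, by transposing lifting problems across the lifted free-forgetful adjunction $P: {}^Q Ch_{\mathbb{K}}^+\rightleftarrows {}^Q_P Ch_{\mathbb{K}}^+:U$.

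First I would record the elementary adjunction lifting lemma in this setting: for every morphism $g$ of ${}^Q Ch_{\mathbb{K}}^+$ and every morphism $f$ of ${}^Q_P Ch_{\mathbb{K}}^+$, the morphism $f$ has the right lifting property with respect to $P(g)$ if and only if $U(f)$ has the right lifting property with respect to $g$. This is purely formal: using the unit and counit of the adjunction, a commutative square with left-hand edge $P(g)$ and right-hand edge $f$ corresponds bijectively to a commutative square with left-hand edge $g$ and right-hand edge $U(f)$, and a diagonal filler of one square transposes to a diagonal filler of the other.

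With this lemma at hand the two statements are immediate. For (i), by definition of the transferred model structure a morphism $f$ of ${}^Q_P Ch_{\mathbb{K}}^+$ is a fibration if and only if $U(f)$ is a fibration in ${}^Q Ch_{\mathbb{K}}^+$; by Proposition 2.15 (i) this holds if and only if $U(f)$ has the right lifting property with respect to every acyclic injection $i:A\hookrightarrow B$ of $Q$-coalgebras with $B$ of countable homogeneous basis, and the lemma converts this into the right lifting property of $f$ with respect to the morphisms $P(i)$. For (ii), $f$ is an acyclic fibration if and only if it is at once a fibration and a weak equivalence; since both of these classes are created by $U$, this is equivalent to $U(f)$ being an acyclic fibration in ${}^Q Ch_{\mathbb{K}}^+$, which by Proposition 2.15 (ii) amounts to $U(f)$ having the right lifting property with respect to every injection $j:A\hookrightarrow B$ of $Q$-coalgebras with $B$ finite dimensional, hence by the lemma to $f$ having the right lifting property with respect to the morphisms $P(j)$.

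The main thing to check carefully is not any of these formal manipulations but the input they rely on: the existence and naturality of the lifted adjunction between ${}^Q Ch_{\mathbb{K}}^+$ and ${}^Q_P Ch_{\mathbb{K}}^+$, which is exactly the adjunction constructed just before the statement by means of Theorem 1.9, together with the already established characterization of Proposition 2.15. Once these are granted the argument is entirely categorical. In particular one does not need to know a priori that the free functor $P$ carries injections of $Q$-coalgebras to cofibrations of bialgebras: the present proposition is precisely what exhibits $\{P(j)\}$ and $\{P(i)\}$ as the generating cofibrations and generating acyclic cofibrations, while the genuinely substantive part of the transfer --- the small object argument yielding the factorization axiom M5 --- is carried out separately.
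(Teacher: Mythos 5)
Your argument is correct and is exactly the ``standard argument using only properties of the unit and the counit of the adjunction as well as lifting properties'' that the paper invokes (with a reference to Theorem 11.3.2 of Hirschhorn) without spelling out: transpose lifting problems across the adjunction $P \dashv U$ and apply Proposition 2.15 to $U(f)$, noting that both fibrations and weak equivalences of bialgebras are created by $U$. No gap; you have simply written out the details the paper leaves to the reader.
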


\begin{proof}
This is a standard argument using only properties of the unit and the counit of the adjunction as well as lifting properties.
This is the same for instance than the one giving the generating (acyclic) cofibrations of the category of $P$-algebras.
It is part of more general results giving conditions to transfer a cofibrantly generated model structure via an adjunction,
see for instance theorem 11.3.2 in \cite{Hir}.
\end{proof}

\textbf{M4 (ii).}If M5 (ii) is proved, then M4 (ii) follows from M4 (i) and M5 (ii) by the same arguments as in the proof
of M4 (i) for coalgebras over an operad (see section 2).
Indeed, any acyclic cofibration $f$ admits a factorization $f=p\circ i$, where $p$ is a fibration
and $i$ an acyclic cofibration. The map $i$ has the left lifting property with respect to fibrations because it is
constructed via the small object argument (see the proof of M5), and $f$ is a retract of $i$ so it inherits this lifting property.

\textbf{M5.} The main difficulty here is to prove axiom M5. Let $f$ be a morphism of
$^Q_P Ch_{\mathbb{K}}^+$. Let us note $\mathcal{F}=\{P(j_i),j_i:A_i\hookrightarrow B_i\}_{i\in I}$
the family of generating cofibrations. Recall that the $A_i$ are $\kappa$-small with respect
to injections systems for a certain cardinal $\kappa$.
Actually, since the generating cofibrations have finite dimensional targets,
they are small with respect to sequences indexed by $\mathbb{N}$.
However, we write the proof without specifying $\kappa$, because after we use the same method to prove M5
for generating acyclic cofibrations, whose domains are small for a non countable cardinal.

We use the following Lemma to prove that the $P(A_i)$ are also $\kappa$-small with respect
to injections systems:
\begin{lem}
Let $C$ be an object of $^Q Ch_{\mathbb{K}}^+$. If $C$ is $\kappa$-small in $^Q Ch_{\mathbb{K}}^+$ with respect to injections systems,
then $P(C)$ is $\kappa$-small in $^Q _P Ch_{\mathbb{K}}^+$ with respect to injections systems.
\end{lem}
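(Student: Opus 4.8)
The plan is to transport the smallness of $C$ across the free/forgetful adjunction $P: {}^Q Ch_{\mathbb{K}}^+\rightleftarrows {}^Q_P Ch_{\mathbb{K}}^+:U$, exactly as one transports smallness of the domains of generating cofibrations along any such adjunction. Fix a $\kappa$-filtered ordinal $\lambda$ and a $\lambda$-sequence of injections of $(P,Q)$-bialgebras with colimit $\bigcup_{\beta<\lambda}B(\beta)$. The natural isomorphism $Mor_{^Q_P Ch_{\mathbb{K}}^+}(P(C),-)\cong Mor_{^Q Ch_{\mathbb{K}}^+}(C,U(-))$ coming from the adjunction identifies the canonical comparison map
\[
\bigcup_{\beta<\lambda}Mor_{^Q_P Ch_{\mathbb{K}}^+}(P(C),B(\beta))\rightarrow Mor_{^Q_P Ch_{\mathbb{K}}^+}(P(C),\bigcup_{\beta<\lambda}B(\beta))
\]
with the comparison map
\[
\bigcup_{\beta<\lambda}Mor_{^Q Ch_{\mathbb{K}}^+}(C,U(B(\beta)))\rightarrow Mor_{^Q Ch_{\mathbb{K}}^+}(C,U(\bigcup_{\beta<\lambda}B(\beta))).
\]
Establishing that the latter is a bijection is exactly what is needed.

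Next I would observe that $U$ sends injections of bialgebras to injections of $Q$-coalgebras, since the underlying map is unchanged, so the objects $U(B(\beta))$ form a $\lambda$-sequence of injections of $Q$-coalgebras. The crux is then to show that $U$ preserves the colimit of this sequence, that is $U(\bigcup_{\beta<\lambda}B(\beta))\cong\bigcup_{\beta<\lambda}U(B(\beta))$. Granting this, the second comparison map above becomes precisely the canonical map measuring the $\kappa$-smallness of $C$ with respect to injection systems in $^Q Ch_{\mathbb{K}}^+$, which is a bijection by hypothesis; reading the identification backwards then shows that $P(C)$ is $\kappa$-small in $^Q_P Ch_{\mathbb{K}}^+$.

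To prove that $U$ preserves these colimits I would compute the colimit through two forgetful functors down to chain complexes and compare. On one side, colimits in $^Q_P Ch_{\mathbb{K}}^+$ are created by the forgetful functor to $_P Ch_{\mathbb{K}}^+$ (axiom M1 of this section), and the forgetful functor $_P Ch_{\mathbb{K}}^+\rightarrow Ch_{\mathbb{K}}^+$ preserves filtered colimits because the monad associated to $P$ does, finite tensor powers and direct sums commuting with filtered colimits. On the other side, colimits in $^Q Ch_{\mathbb{K}}^+$ are created by the forgetful functor to $Ch_{\mathbb{K}}^+$, by the argument for M1 given in section 2.2 applied to the operad $Q$. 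Hence $U(\bigcup_{\beta<\lambda}B(\beta))$ and $\bigcup_{\beta<\lambda}U(B(\beta))$ have the same underlying chain complex, namely the union, and their $Q$-coalgebra structure maps agree since the cofree functor $Q^*$ likewise commutes with filtered colimits; this yields the desired isomorphism of $Q$-coalgebras.

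The \textbf{main obstacle} is precisely this last reconciliation: the colimit is computed via $P$-algebras inside the bialgebra category but via plain chain complexes inside the coalgebra category, and one must check that these two recipes agree after applying $U$. This is where the good behaviour of the Schur functors of $P$ and $Q^*$ under filtered colimits, together with the characteristic-zero and finiteness hypotheses on $Q$, enters in an essential way.
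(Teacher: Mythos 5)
Your proposal is correct and follows essentially the same route as the paper: reduce the smallness of $P(C)$ to that of $C$ via the adjunction isomorphism, and then verify that the forgetful functor $U:{}^Q_P Ch_{\mathbb{K}}^+\rightarrow{}^Q Ch_{\mathbb{K}}^+$ preserves the colimit of the injection system. The only cosmetic difference is that the paper establishes this last point by writing the colimit explicitly as a union of equivalence classes and defining the $P$-algebra structure on representatives, whereas you deduce it from the creation of colimits by the various forgetful functors together with the fact that the Schur functor of $P$ commutes with filtered colimits; the substance is the same.
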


\begin{proof}
Let us suppose that $C$ is $\kappa$-small with respect to injections systems in $^Q Ch_{\mathbb{K}}^+$.
Let $\lambda$ be a $\kappa$-filtered ordinal and let $F:\lambda\rightarrow {}^Q_P Ch_{\mathbb{K}}^+$
be a $\lambda$-sequence whose arrows are injections.. For every $\beta<\lambda$,
\[
Hom_{^Q_P Ch_{\mathbb{K}}^+}(P(C),F(\beta))\cong Hom_{^Q Ch_{\mathbb{K}}^+}(C,(U\circ F)(\beta))
\]
hence
\begin{eqnarray*}
colim_{\beta<\lambda}Hom_{^Q_P Ch_{\mathbb{K}}^+}(P(C),F(\beta))& \cong & colim_{\beta<\lambda}Hom_{^Q Ch_{\mathbb{K}}^+}(C,(U\circ F)(\beta))\\
 & \cong & Hom_{^Q Ch_{\mathbb{K}}^+}(C,colim_{\beta<\lambda}(U\circ F)(\beta))
\end{eqnarray*}
because $U\circ F:\lambda\rightarrow {}^Q Ch_{\mathbb{K}}^+$ and $C$ is $\kappa$-small with respect to injections systems.
We can equip $colim_{\beta<\lambda}(U\circ F)(\beta)$ with a structure of $P$-algebra, such that with this structure
it forms $colim_{\beta<\lambda}F(\beta)$ in $^Q_P Ch_{\mathbb{K}}^+$. Indeed, we have
\[
colim_{\beta<\lambda}(U\circ F)(\beta)=\{[a],a\in F(\beta)\}/\sim
\]
where $a\sim b$ (i.e $[a]=[b]$), $a\in F(\beta),b\in F(\beta')$, $\beta\leq \beta'$, if the application $F(\beta)\rightarrow F(\beta')$ in
the $\lambda$-sequence sends $a$ to $b$. Let $[a_1],...,[a_r]\in colim_{\beta<\lambda}(U\circ F)(\beta)$ such that
$a_1\in F(\beta_1),...,a_r\in F(\beta_r)$. We consider $F(\beta)$ for a given ordinal $\beta\geq max(\beta_1,...,\beta_r)$ and we set,
for $\mu\in P(\beta)$, $\mu([a_1],...,[a_r])=\mu(a'_1,...,a'_r)$ where $a'_1,...,a'_r$ are representing
elements of $[a_1],...,[a_r]$ in $F(\beta)$. We then obtain a $P$-algebra structure on $colim_{\beta<\lambda}(U\circ F)(\beta)$
(one says that the forgetful functor creates the sequential colimits). Moreover, this $P$-algebra structure is compatible with
the $Q$-coalgebras structures of the $F(\beta)$, since it is defined via their $P$-algebra structures. It is therefore compatible
with the $Q$-coalgebra structure of $colim_{\beta<\lambda}(U\circ F)(\beta)$, such that we obtain the colimit in $^Q_P Ch_{\mathbb{K}}^+$.
We can finally write
\begin{eqnarray*}
colim_{\beta<\lambda}Hom_{^Q_P Ch_{\mathbb{K}}^+}(P(C),F(\beta)) & \cong & Hom_{^Q Ch_{\mathbb{K}}^+}(C,U(colim_{\beta<\lambda}F(\beta))) \\
 & \cong & Hom_{^Q_P Ch_{\mathbb{K}}^+}(P(C),colim_{\beta<\lambda}F(\lambda)).
\end{eqnarray*}
\end{proof}
We want to apply the small object argument to obtain a factorization $f=f_{\infty}\circ i_{\infty}$ of $f$.
Recall that, given a $\kappa$-filtered ordinal $\lambda$, for every $\beta<\lambda$, the space $G^{\beta}(\mathcal{F},f)$ is obtained by a pushout
\[
\xymatrix{ \bigvee_i P(A_i) \ar[d]_{\bigvee_i P(j_i)} \ar[r] & G^{\beta-1}(\mathcal{F},f) \ar[d]^{i_{\beta}} \\
\bigvee_i P(B_i) \ar[r] & G^{\beta}(\mathcal{F},f) }.
\]
The forgetful functor $U: {}^Q_P Ch_{\mathbb{K}}^+\rightarrow {}_P Ch_{\mathbb{K}}^+$ creates
small colimits, so we obtain the same pushout diagram in $_P Ch_{\mathbb{K}}^+$ by forgetting
the $Q$-coalgebras structures. The $j_i$ are cofibrations of $^Q Ch_{\mathbb{K}}^+$, so
the underlying chain complexes morphisms are cofibrations of $Ch_{\mathbb{K}}^+$.
Thus, via the adjunction $P:Ch_{\mathbb{K}}^+\rightleftarrows {}_P Ch_{\mathbb{K}}^+:U$,
the $P(j_i)$ are cofibrations of $_P Ch_{\mathbb{K}}^+$ and so are $\bigvee_i P(j_i)$.
In any model category, cofibrations are stable by pushouts, so the $i_{\beta}$ are cofibrations
of $_P Ch_{\mathbb{K}}^+$. By definition of cofibrations in $_P Ch_{\mathbb{K}}^+$, we can
apply Lemma 1.13 to $i_{\infty}$ to deduce that $i_{\infty}$ forms a cofibration
in $_P Ch_{\mathbb{K}}^+$. We now use the following proposition:
\begin{prop}
An (acyclic) cofibration of $_P Ch_{\mathbb{K}}^+$ forms an (acyclic) cofibration in $Ch_{\mathbb{K}}^+$.
\end{prop}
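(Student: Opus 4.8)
The plan is to reduce the statement to a single cellular extension and then to analyse the underlying chain complex of such an extension by a filtration argument, the characteristic zero hypothesis entering only at the very end. First I would recall, from the discussion following Theorem 3.1, that the cofibrations of $_P Ch_{\mathbb{K}}^+$ are retracts of relative cell complexes obtained by attaching cells of the form $P(j)$, where $j:C\hookrightarrow D$ is an injection of chain complexes (a coproduct of generating cofibrations $\bigvee_i P(j_i)=P(\bigoplus_i j_i)$ is again of this shape, since $\bigoplus_i j_i$ is an injection). The degreewise injections of $Ch_{\mathbb{K}}^+$ are stable under retracts and under transfinite composition, so it suffices to show that for a single pushout
\[
\xymatrix{ P(C) \ar[d]_{P(j)} \ar[r] & A \ar[d] \\ P(D) \ar[r] & A' }
\]
with $j$ injective, the underlying map $U(A)\to U(A')$ is degreewise injective. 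The acyclic case then costs nothing extra: if $f$ is an acyclic cofibration of $_P Ch_{\mathbb{K}}^+$, then $U(f)$ is a weak equivalence by the very definition of the weak equivalences of $_P Ch_{\mathbb{K}}^+$, and a cofibration by the non-acyclic case, hence an acyclic cofibration of $Ch_{\mathbb{K}}^+$.

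The subtlety is that $U$, being a right adjoint, does not commute with this pushout, so $U(A')$ is \emph{not} the pushout of the underlying complexes. Instead I would equip $A'$ with the canonical filtration $A=F_0A'\subseteq F_1A'\subseteq\cdots$ by sub-objects, with $\mathrm{colim}_n F_nA'=A'$, which is the algebra-side counterpart of the enveloping cooperad construction of Section 2.2. Writing $U_P(A)$ for the enveloping operad of $A$ in the sense of \cite{Fre2} (of which the enveloping cooperad of Section 2.2 is the dual) and $K=\mathrm{coker}(j)=D/C$, the successive layers are identified as
\[
F_nA'/F_{n-1}A'\cong U_P(A)(n)\otimes_{\Sigma_n}K^{\otimes n}.
\]
Since the filtration is by subcomplexes, the bottom inclusion $U(A)=F_0A'\hookrightarrow U(A')$ is degreewise injective as soon as each $F_{n-1}A'\hookrightarrow F_nA'$ is.

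The hard part will be precisely the identification of these layers and the injectivity of the filtration maps, and this is where the hypotheses on $\mathbb{K}$ are used. Here I would invoke the norm argument of Proposition 2.9: over a field of characteristic zero the norm map exhibits $U_P(A)(n)\otimes_{\Sigma_n}K^{\otimes n}$ as a natural retract of $U_P(A)(n)\otimes K^{\otimes n}$, so the coinvariants functor $(-)\otimes_{\Sigma_n}K^{\otimes n}$ is exact and the short exact sequences $0\to F_{n-1}A'\to F_nA'\to U_P(A)(n)\otimes_{\Sigma_n}K^{\otimes n}\to 0$ split degreewise as graded vector spaces. Each inclusion $F_{n-1}A'\hookrightarrow F_nA'$ is therefore a degreewise injection, and so is the transfinite composite $U(A)\hookrightarrow U(A')$. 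This is exactly the content of Hinich's result on cofibrations of algebras over an operad in characteristic zero \cite{Hin}, to which one may alternatively appeal directly; the argument above only indicates how the enveloping operad and the characteristic zero splitting of the symmetric coinvariants assemble to give it.
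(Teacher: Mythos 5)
Your proposal is correct, but it does considerably more work than the paper, which disposes of this proposition in a single line by citing section 4.6.3 of \cite{Hin} together with the observation that every operad over a field of characteristic zero is $\Sigma$-split in Hinich's sense. What you have written is essentially an unpacking of that citation: the reduction to a single cellular attachment $P(j)$, the filtration of the pushout with subquotients $U_P(A)(n)\otimes_{\Sigma_n}K^{\otimes n}$, and the norm-map retraction in characteristic zero are exactly the ingredients of the Hinich--Fresse argument (compare the treatment of cell attachments via enveloping operads in \cite{Fre2} and \cite{Fre3}), and your closing sentence concedes that one may simply appeal to Hinich, which is precisely what the paper does. Two points of care if you were to write your version out in full. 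First, declaring $A=F_0A'\subseteq A'$ at the outset quietly presupposes the injectivity you are trying to prove; the honest order of operations is to build a graded model with underlying object $\bigoplus_n U_P(A)(n)\otimes_{\Sigma_n}K^{\otimes n}$ equipped with a twisted differential, verify that it satisfies the universal property of the pushout in $_P Ch_{\mathbb{K}}^+$, and only then read off that $U(A)\to U(A')$ is a split monomorphism of graded vector spaces. Second, the identification of the layers uses a degreewise splitting $D\cong C\oplus K$ of the attaching cofibration, which is available here because we work over a field. Neither point is a genuine obstruction, so your route is a valid, more self-contained alternative to the paper's bare citation.
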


\begin{proof}
See section 4.6.3 in \cite{Hin} (Note that for a base field of characteristic zero,
every operad is $\Sigma$-split in the sense defined by Hinich).
\end{proof}
The maps $i_{\beta}$ (and thus $i_{\infty}$) forms therefore cofibrations in $Ch_{\mathbb{K}}^+$, i.e injections.
This is crucial to apply our version of the small object argument, since the $P(A_i)$ are $\kappa$-small
only with respect to injections systems. Finally, $i_{\infty}$ forms a cofibration in $^Q_P Ch_{\mathbb{K}}^+$.
The map $f_{\infty}$ has the right lifting
property with respect to the generating cofibrations and forms thus an acyclic fibration. Axiom M5 (i) is proved.

The method to prove M5 (ii) is the same up to two minor changes: we consider the family of generating
acyclic cofibrations, and use the stability of acyclic cofibrations under pushouts.

\bigskip{}

\end{document}